\documentclass{amsproc}

\newtheorem{theorem}{Theorem}[section]
\newtheorem{lemma}[theorem]{Lemma}

\theoremstyle{definition}

\newtheorem{example}[theorem]{Example}

\theoremstyle{remark}
\newtheorem{remark}[theorem]{Remark}

\numberwithin{equation}{section}

\usepackage{tcolorbox}

\usepackage{comment}

\usepackage[abs]{overpic}		  

\usepackage{xcolor}
\definecolor{indigo}{rgb}{0.29, 0.0, 0.51}  
\definecolor{lred}{RGB}{226, 106, 106}
\definecolor{dgreen}{RGB}{0, 161, 75}
\definecolor{dblue}{RGB}{33, 64, 154}
\definecolor{lblue}{RGB}{0, 174, 239}
\usepackage[colorlinks, urlcolor=indigo, linkcolor=indigo, citecolor=indigo]{hyperref}

\newtheorem{corollary}[theorem]{Corollary}
\newtheorem{exercise}[theorem]{Exercise}
\theoremstyle{remark}

\newcommand{\bhw}{\begin{exercise}} 
\newcommand{\ehw}{\end{exercise}}
\newcommand{\bex}{\begin{example}} 
\newcommand{\eex}{\end{example}}

\def\co{\colon\thinspace}

\numberwithin{theorem}{section}

\newcommand{\dfn}[1]{{\em #1}}        
\newcommand{\R}{\mathbb{R}}           
\newcommand{\Z}{\mathbb{Z}}           
\newcommand{\C}{\mathbb{C}}           

\makeatletter
\newcommand*\bigcdot{\mathpalette\bigcdot@{0.6}}
\newcommand*\bigcdot@[2]{\mathbin{\vcenter{\hbox{\scalebox{#2}{$\m@th#1\bullet$}}}}}
\makeatother

\DeclareMathOperator\idx{index}  
\DeclareMathOperator\img{image}  

\DeclareMathOperator{\sym}{{Sym}} 

\begin{document}

\title{Building homology theories (ala Floer)} 

\author{John B. Etnyre}
\address{School of Mathematics \\ Georgia Institute
of Technology \\  Atlanta, Georgia}
\email{etnyre@math.gatech.edu}
\thanks{The first author was supported in part by NSF Grant DMS-2203312 and the Georgia Institute of Technology Elaine M. Hubbard Distinguished Faculty Award.}


%
%

\begin{abstract}
These notes are an expanded version of evening talks at the 2025 Georgia International Topology Conference, and an abbreviated version of talks at Georgia Tech, which were aimed at graduate students. The hope was to indicate a common framework that has been used since the late 1980s to construct homology theories in low-dimensional topology and symplectic and contact geometry. In addition to this, we also try to indicate how the specific nature of the situation being studied dictates the algebraic nature of the chain groups used to define the homology. 
\end{abstract}

\maketitle
\setcounter{tocdepth}{2}
\tableofcontents

\section{Introduction}

In the last 40 or so years, there have been lots of ``homology theories" in topology and geometry that are built through a similar process. This goes back to work of Floer in \cite{Floer1987, Floer1988} that built Symplectic Floer Homology and Lagrangian Floer Homology by looking at the ``moduli space'' of solutions to a partial differential equation. Since then, the classical Morse Homology has been reinterpreted in this manner, see \cite{Schwarz1993}, and countless other homology theories have been defined. For example, various flavors of Instanton Floer Homology \cite{Donaldson2002, Floer1988a, KronheimerMrowka2010} and Monopole Floer Homology \cite{KronheimerMrowka2007} are defined in a similar way, and Contact Homology \cite{Eliashberg98}, Embedded Contact Homology \cite{Hutchings2002, Hutchings2014}, Legendrian Contact Homology \cite{Chekanov02, EkholmEtnyreSullivan07, Eliashberg98}, Heegaard Floer Homology \cite{OzsvathSzab02004}, and Knot Contact Homology \cite{EkholmEtnyreNgSullivan2013} are defined using variants of the Floer approach. (To be clear, the work to define each of these homology theories is considerable. While the approach was laid out by Floer, there are considerable details to be understood and dealt with.)

In these notes, we will discuss a general procedure for constructing the homology theories above. We will illustrate the process in three examples. The simplest case, with which we start, is Morse Homology. We then discuss Lagrangian Floer Homology and Legendrian Contact Homology. The main purpose of this paper is to (1) demonstrate the general process for constructing such homology theories and (2) give a good indication of how one must choose the correct algebraic structure for the chain groups of the homology theory based on the geometry of the problem at hand. There are several great references for the details of Morse Homology \cite{HutchingsNotes, Schwarz1993}, and some for the other theories \cite{AbbondandoloSchlenk2019, AudinDamian2014, BanyagaHurtubise2004}, but we hope by not going into a lot of detail on the proofs (though Section~\ref{details} gives a flavor for these), we can help people see the general approach to building homology theories and how ``compactness issues'' force the necessary algebraic structures for the homology theory. 

Before diving into the examples, we outline a general process for constructing invariants. 
Any unfamiliar terms in the examples below will be explained later in the text. 

\begin{tcolorbox}[title={\bf Step 0:} Start with something you are interested in studying.]
Examples:
\begin{itemize}
\item For Morse homology, we want to study manifolds.
\item For Lagrangian Floer homology, we want to study pairs of Lagrangian submanifolds of symplectic manifolds.
\item For Legendrian contact homology, we want to study Legendrian submanifolds of contact manifolds. 
\end{itemize}
\end{tcolorbox}

\begin{tcolorbox}[title={\bf Step 1:} Choose extra data.]
Examples:
\begin{itemize}
\item For Morse homology, we choose a pair $(f,g)$ where $f\co M\to \R$ is a Morse function and $g$ is a Riemannian metric on $M$. 
\item For Lagrangian Floer homology, we choose an almost-complex structure compatible with the symplectic structure. 
\item For Legendrian contact homology, we choose a contact form $\alpha$ for the contact structure and a complex structure compatible with $d\alpha$ on the contact hyperplanes. 
\end{itemize}
\end{tcolorbox}

\begin{tcolorbox}[title={{\bf Step 2:} Find an algebraic object, usually a chain complex, that counts something of geometric interest.}]
Examples:
\begin{itemize}
\item For Morse homology, choose a chain complex that is a vector space generated by critical points of $f$ and the differential counts gradient flow lines of $f$ with respect to $g$. 
\item For Lagrangian Floer homology, choose a chain complex that is a vector space generated by intersection points between the two Lagrangian submanifolds and the differential counts holomorphic disks ``between intersection points''. 
\item For Legendrian contact homology, choose a chain complex that is an algebra generated by ``Reeb cords" and the differential counts holomorphic disks with certain boundary conditions. 
\end{itemize}
\end{tcolorbox}

\begin{tcolorbox}[colupper=white, colback=black!75!white,colframe=black!75!white]
{{\bf Step 3:} Show something about the computation in Step~2, usually the homology of the complex, does not depend on the extra data chosen in Step~1.}
\end{tcolorbox}

Before getting into the more complicated examples, we give a couple of very simple examples of the above process. 
\bex
We will be interested in the topology of a smooth manifold $M$. We carry out the above procedure as follows. 
\begin{enumerate}
\item[Step 1.] Choose a ``generic" section of the cotangent bundle $T^*M$.
\item[Step 2.] Define a number by the signed count of the zeros of the generic section. 
\item[Step 3.] Show that this integer is independent of the ``generic" section. 
\end{enumerate}
We call the number defined above the \dfn{Euler characteristic} of $M$. 
\eex
\bhw
Rigorously go through Steps~1.\ to~3.\ to show that you have a well-defined integer associated with $M$. If you know another definition of the Euler characteristic of a smooth manifold, show that it agrees with the definition above. 
\ehw
\bex
Suppose we are interested in the topology of knots in $\R^3$. We carry out the above procedure as follows. 
\begin{enumerate}
\item[Step 1.] Choose a ``generic" projection of the knot $K$ to $\R^2$ and remember over and undercrossings in the projection. (That is, choose a knot diagram for $K$.)
\item[Step 2.] Say the diagram $D$ of $K$ is \dfn{$3$-colorable} if the strands of the diagram (that is the arcs the projection is broken into, when small neighborhoods of the crossings are removed from the undercrossing stand) can be colored by three colors so that at each crossing either one or all three colors are used and at least two colors are used. 
\item[Step 3.] Show that being $3$-colorable is independent of the diagram chosen for $K$.
\end{enumerate}
\eex
\bhw
Rigorously go through Steps~1.\ to~3.\ to show that being $3$-colorable is an invariant of a knot $K$ in $\R^3$. \\ Hint: Use Reidemeister moves. We note that the Reidemeister moves simply described a ``generic" path of projection of $K$. 
\ehw
\begin{remark}
We note that there are lots of invariants of knots that can be defined diagrammatically, and so we see the process above to construct invariants is quite robust for knots. (We also note that many of these diagrammatic invariants --- including $3$-colorability --- have other, more intrinsic, definitions that do not come from the above process.)
\end{remark}

While this is a ``high-level" look at defining invariants, we indicate how Steps~1, 2, and~3 go for ``Floer-type homologies". Above, we indicated what the chain groups would be, so we now need to consider what the boundary maps in the chain complex will be. 
This will necessarily be fairly abstract, but we will get into the details of the above homology theories in the following sections. 

To carry out Step~1 and~2 we let $\mathcal{M}^a_b$ be the space of ``objects" counted in the differential (for example, for Morse homology they will be gradient flow lines from $a$ to $b$, for Lagrangian Floer homology they will be holomorphic disk ``from" $a$ ``to'' $b$, and for Legendrian contact homology they are a bit more complicated). The first theorem we need is a transversality theorem that takes the following form. 
\begin{tcolorbox}[title={Theorem 1 (Transversality)}]
For generically chosen data $\mathcal{M}_b^a$ is a manifold of an easily computed dimension. 
\end{tcolorbox}
To see why this is called a transversality theorem, we note that $\mathcal{M}_b^a$ can be defined as the preimage under some function $F$ defined on some function spaces of a submanifold of the target space. So if $F$ is transverse to this submanifold (in some appropriate sense), the preimage will be a manifold. Next, we would like to know that $\mathcal{M}_b^a$ can be oriented.
\begin{tcolorbox}[title={Theorem 2 (Orientability)}]
One can make a priori choices so that $\mathcal{M}^a_b$ can be given an orientation for all choices of $a$ and $b$. 
\end{tcolorbox}
We would also like to know if $\mathcal{M}_b^a$ is compact and if not, how it fails to be compact.
\begin{tcolorbox}[title={Theorem 3 (Compactness)}]
If $\{u_i\}$ is a sequence in $\mathcal{M}_b^a$ that has no subsequence that converges to a point in $\mathcal{M}_b^a$, then specify what some subsequence does ``converge to".
\end{tcolorbox}
We first note that this theorem should really be considered a ``lack of compactness" theorem since frequently the spaces are not actually compact, but it is still common to call this a ``compactness" theorem. 
In all relevant situations, we will see that when $\mathcal{M}^a_b$ has dimension $0$, then $\mathcal{M}_b^a$ is compact. Thus $\mathcal{M}_b^a$ will be a finite number of points with signs, and we can define the boundary map for the chain complex to be 
\[
\partial a= \sum |\mathcal{M}^a_b| b,
\]
where the sum is over all $b$ such that $\mathcal{M}^a_b$ has dimension $0$ and $|\mathcal{M}^a_b|$ is the signed count of points in $\mathcal{M}^a_b$ (we note if you only use the modulo two count of points in $\mathcal{M}^a_b$ then one can skip Theorem~2 above). We will see below that the precise form of the compactness theorem will dictate the exact algebraic structure of the chain complex. The above theorems and comments indicate that $\partial a$ is well-defined. 

To show that $\partial$ is really a boundary map, that is, that $\partial \circ \partial=0$, we need one last type of theorem. 
\begin{tcolorbox}[title={Theorem 4 (Gluing)}]
If we have some ``element" that might be the limit of a sequence in Theorem~3 above, then it is actually such a limit, and in a nice way.
\end{tcolorbox}
In this abstract setting, it is not so clear how this will imply that $\partial \circ \partial=0$, but below, in specific examples, we will see that this is indeed the case. 

So the above four theorems are exactly what is needed to prove that we have a chain complex! To carry out Step~3, that is to see that the homology of this complex does not depend on the extra data chosen in Step~1, we need to prove analogs of Theorems~1 through~4 when one considers ``generic" $1$-parameter families of extra data from Step~1. 

While the above discussion is a bit abstract, in the next section, we will see precise versions of the above theorems and prove that they allow us to define chain complexes whose homology is an invariant of manifolds. 

We have tried to make these notes largely self-contained and accessible to anyone with a background in differential topology at the level of the standard textbooks \cite{GuilleminPollack10, Lee13}. 

\subsection*{Acknowledgements} We thank B\"ulent Tosun and an anonymous referee for valuable feedback on early drafts of the paper. The author was partially supported by the National Science Foundation grant DMS-2203312 and the Georgia Institute of Technology Elaine M. Hubbard Distinguished Faculty Award

\section{Morse Homology}
In this section, we will study a compact manifold without boundary. We do not really need these assumptions on the manifold, but it is easier on a first pass. Once the reader has understood this case, they are encouraged to consider the necessary modifications for manifolds with boundary and non-compact manifolds. But for now
\[
\text{\bf all manifolds in this section are compact and without boundary.}
\]

In the first section, we will recall facts about Morse functions and gradient flows, while in the second section, we will discuss the space of gradient flow lines. In the next section, we will define Morse homology, and in the final section, we will briefly indicate what goes into the proofs of the main theorems. 

We would also like to point out that the approach to Morse homology given below fits nicely with our discussion of other homology theories, but is not the original way in which Morse homology was defined. There were several previous approaches that were much more in the spirit of topology and dynamics. We refer the reader to the beautiful exposition of Bott \cite{Bott1988} on the history of Morse homology. 

\subsection{Morse functions and gradient flows}
We recall that a function
\[
f\co M\to \R
\]
 is called a \dfn{Morse function} if any critical point $p\in M$ of $f$ --- that is the points where $df_p=0$ --- is non-degenerate. Here by \dfn{non-degenerate} we mean that the \dfn{Hessian of $f$ at $p$}, 
 \[
 Hf_p\co T_pM\times T_pM\to \R\co (v,w)\mapsto w\cdot(v\cdot f),
 \]
 is a non-degenerate symmetric $2$-form. Here we think of a vector field as a derivation, so $v\cdot f$ is another function that can be thought of as the directional derivative of $f$ at each point.
 \begin{remark}
 We note that, as defined above, the Hessian is {\em not} well-defined. This is because for a derivation $v\in T_pM$, we have that $v\cdot f$ is a real number, not a function on $M$, so we cannot apply $w$ to this. What we mean by the notation above is given $v\in T_pM$, we extend $v$ to a vector field $\widetilde{v}$ on $M$ so that $\widetilde{v}\cdot f$ is another function on $M$ and then $w\cdot (\widetilde{v}\cdot f)$ makes sense. But the value, in general, depends on the extension $\widetilde{v}$ of $v$. 
 
 Now at a critical point $p$ of $f$ we will see that $Hf_p$ is well-defined. To this end, let $\widetilde{w}$ be an extension of $w$ to a vector field on $M$ and recall that 
 \[
 v\cdot f = df_p(v).
 \]
 So we see that
 \[
(\widetilde{v}\cdot (\widetilde{w}\cdot f))(p)-(\widetilde{w}\cdot(\widetilde{v}\cdot f))(p)= ([\widetilde{v}, \widetilde{w}])_p\cdot f = df_p([\widetilde{v}, \widetilde{w}])=0
 \]
 since $df_p=0$ at a critical point. Here $[\widetilde{v}, \widetilde{w}]_p$ is the Lie bracket of the vector fields evaluated at $p$. Thus, we see that
 \[
 (\widetilde{v}\cdot(\widetilde{w}\cdot f))(p)=(\widetilde{w}\cdot(\widetilde{v}\cdot f))(p)
 \]
 and so
 \[
 (\widetilde{v}\cdot(\widetilde{w}\cdot f))(p)= d(\widetilde{w}\cdot f)_p(\widetilde{v})=d(\widetilde{w}\cdot f)_p({v})
 \]
 since $d(\widetilde{w}\cdot f)_p(\widetilde{v})$ only depends on the value of $\widetilde{v}$ at $p$. We can similarly see $ (\widetilde{v}\cdot(\widetilde{w}\cdot f))(p)$ only depends on the value of $\widetilde{w}$ at $p$ and thus $Hf_p(v,w)$ is well-defined when $p$ is a critical point. We also notice that $Hf_p$ is symmetric at a critical point. 
 \end{remark}
\bhw
In local coordinates $(x_1, \ldots, x_n)$ near $p$, show that  the symmetric, bilinear form $Hf_p$ can be given by the matrix
\[
\begin{pmatrix}
\frac{\partial^2 f}{\partial x_i\partial x_j}
\end{pmatrix}_{i,j=1}^n
\]
in the basis $\frac{\partial}{\partial x_1}, \ldots, \frac{\partial}{\partial x_n}$ for the tangent space given by the coordinate system.
\ehw
\bhw
Show that the matrix expression for $Hf_p$ is independent of local coordinates at a critical point. (That is, show that in another local coordinate system $(y_1, \ldots, y_n)$ you will get a different matrix, but it will be related by a change of basis for the tangent space given by the change of coordinates.)
\ehw
Here is another way to define the Hessian.
\bhw
Suppose that $g$ is a Riemannian metric on $M$ and let $\nabla$ be the Levi-Civita connection on $T^*M$ associated to $g$ (we note that the Levi-Civita connection is usually defined on $TM$, but as $TM$ and $T^*M$ can be identified using $g$ we will consider the connection on $T^*M$). We note that one may actually use any connection on $T^*M$, but this is a natural one. We recall that $\nabla$ allows one to differentiate sections of $T^*M$. So, given a vector field $w$, we obtain the $1$-form $\nabla_w df$. Show that at a critical point $p$ of $f$, $\nabla_wdf$ is independent of the connection and $Hf_p(v,w)=(\nabla_w df)(v)$. 
\ehw
\bhw
Note that $df\co M\to T^*M$ is a section of the cotangent bundle. Show that $f$ is a Morse function if and only if $df$ is transverse to the zero section $Z$ of $T^*M$. 
\ehw
The first thing one would like to know is that any manifold has a Morse function. 
\begin{theorem}
A generic smooth function $f\co M\to \R$ is a Morse function. 
\end{theorem}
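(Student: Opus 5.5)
We will use the last exercise: $f$ is Morse if and only if the section $df\co M\to T^*M$ is transverse to the zero section $Z$. "Generic" we interpret as the set of Morse functions being open and dense in $C^\infty(M,\R)$ (or residual, which suffices). The natural tool is the parametric transversality theorem from \cite{GuilleminPollack10}: if $F\co M\times S\to N$ is transverse to a submanifold $Z\subset N$, then for almost every $s\in S$ the map $F_s=F(\cdot,s)$ is transverse to $Z$.

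The plan is to embed $M\hookrightarrow \R^N$ by Whitney, with coordinates $x_1,\dots,x_N$ restricted to $M$. Given any smooth $f$, consider the finite-dimensional family
\[
f_a = f + \sum_{i=1}^N a_i x_i, \qquad a\in \R^N,
\]
and the map $F\co M\times\R^N\to T^*M$, $F(p,a)=d(f_a)_p = df_p+\sum a_i (dx_i)_p$. The key step is to check that $F$ is a submersion, hence transverse to $Z$. Already for fixed $p$, varying $a$ moves $F(p,a)$ in the fiber direction by $\sum a_i (dx_i)_p$. Since the $x_i$ restrict to an embedding, the covectors $(dx_i)_p$ span $T_p^*M$. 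So the derivative of $F$ surjects onto the vertical tangent space, and it hits the horizontal directions via $p$. Parametric transversality then gives that $df_a\pitchfork Z$, i.e. $f_a$ is Morse, for almost every $a$. Taking $a$ small shows that every $f$ is a $C^\infty$-limit (indeed a $C^k$-small perturbation on compact $M$) of Morse functions, which is density.

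For openness, we use compactness of $M$. If $f$ is Morse, then $df\pitchfork Z$ at finitely many points, and transversality of a section to $Z$ is a $C^1$-open condition on the section, hence a $C^2$-open condition on $f$. Concretely, near each critical point the Hessian stays non-degenerate under $C^2$-small perturbation, which gives uniqueness of the nearby critical point by the inverse function theorem applied to $df$. Away from neighborhoods of the critical points, $|df|$ is bounded below (with respect to some metric) and so stays nonzero.

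We expect the main obstacle to be the bookkeeping of what "generic" means, and the openness argument, rather than density: the submersion check in the density step is immediate. We would handle openness carefully with a metric and the compactness estimate above, and state the result as: Morse functions form an open dense subset of $C^\infty(M,\R)$ in the $C^2$ (hence $C^\infty$) topology.
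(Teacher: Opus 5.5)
Your proof is correct. It is the standard embedding-into-Euclidean-space argument the paper alludes to: the paper gives no proof of its own, and only points to this approach and to the jet transversality proof in \cite[Proposition~6.13]{GolubitskyGuillemin1973}. Two steps carry the argument. The submersion check for $F(p,a)=d(f_a)_p$ works because $\R^{N*}\to T_p^*M$ is surjective, and compactness gives openness. Together they show Morse functions form an open dense set, which is stronger than the residual notion of genericity the paper uses.
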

In this theorem, by generic, we mean that the space of Morse functions $\mathcal{M}(M)$ on $M$ as a subset of the set $C^\infty(M)$ of all $C^\infty$ functions on $M$ is dense. (Or more precisely, $\mathcal{M}(M)$ is residual, meaning it is the intersection of countably many open dense sets in $C^\infty(M)$.) 

There are many proofs of the above theorem. Most involve embedding $M$ into some Euclidean space, but see \cite[Proposition~6.13]{GolubitskyGuillemin1973} for an alternate and beautiful proof using jet transversality.

Given a Morse function $f\co M\to \R$ on an $n$-manifold $M$ and a critical point $p\in M$, the Hessian $Hf_p$ can be represented by a symmetric $n\times n$ matrix. Thus, the matrix can be diagonalized. This implies that 
\[
T_pM = V_0\oplus V_+\oplus V_-
\]
where $V_+,$ respectively $V_-$, is the subspace of $T_pM$ on which $Hf_p$ is positive, respectively negative, definite and $V_0$ is the subspace where $Hf_p$ is degenerate. Since $f$ is Morse and $p$ is a non-degenerate critical point, we see that $V_0=\{0\}$, and so 
\[
n= \dim T_pM = \dim V_+ + \dim V_-
\]
We call the dimension of $V_-$ the \dfn{index} of $p$. 
\begin{theorem}[Morse Lemma]\label{ML}
Let $p$ be a non-degenerate critical point of 
\[
f\co M\to \R.
\]
If $k$ is the index of $p$ then there is a coordinate chart $\phi\co U\to V$ around $p$ such that 
\[
f\circ \phi^{-1}(x_1,\ldots, x_n)= f(p) -x_1^2-\cdots - x^2_k+x^2_{k+1}+\cdots + x^2_n.
\]
\end{theorem}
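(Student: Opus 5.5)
The plan is to work entirely in a local chart and reduce the problem to the classical algebraic fact that a nondegenerate symmetric matrix can be diagonalized. First choose any coordinate chart around $p$ sending $p$ to $0$. Write $f$ in these coordinates as a function on a star-shaped (say convex) neighborhood of $0\in\R^n$, and replace $f$ by $f-f(p)$ so that $f(0)=0$. Since $p$ is a critical point we also have $df_0=0$.

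The first key step is Hadamard's lemma, applied twice. From
\[
f(x)=\int_0^1 \frac{d}{dt}f(tx)\,dt=\sum_i x_i\int_0^1 \frac{\partial f}{\partial x_i}(tx)\,dt
\]
we get $f(x)=\sum_i x_i g_i(x)$ with $g_i$ smooth and $g_i(0)=\frac{\partial f}{\partial x_i}(0)=0$. Applying the same argument to each $g_i$ gives
\[
f(x)=\sum_{i,j} x_ix_j h_{ij}(x),
\]
with $h_{ij}$ smooth. Replacing $h_{ij}$ by $\tfrac12(h_{ij}+h_{ji})$, we may assume $H(x)=(h_{ij}(x))$ is a symmetric matrix of smooth functions. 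Differentiating twice at $0$ gives $h_{ij}(0)=\tfrac12\frac{\partial^2 f}{\partial x_i\partial x_j}(0)$. By the exercise above identifying $Hf_p$ with the matrix of second partials, $H(0)$ is therefore nondegenerate, with exactly $k$ negative eigenvalues.

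The second step is an induction that performs Gram--Schmidt, or completing the square, with smoothly varying coefficients. Suppose that in some coordinates $(u_1,\dots,u_n)$ near $0$ we have
\[
f=\pm u_1^2\pm\cdots\pm u_{r-1}^2+\sum_{i,j\ge r}u_iu_jH_{ij}(u),
\]
with $(H_{ij}(0))_{i,j\ge r}$ symmetric and nondegenerate. After a linear change of the variables $u_r,\dots,u_n$ we may assume $H_{rr}(0)\neq 0$; this is possible because a nonzero symmetric form takes a nonzero value on some vector. On a smaller neighborhood, $H_{rr}\neq0$, so we can define
\[
v_r=\sqrt{|H_{rr}(u)|}\Bigl(u_r+\sum_{i>r}u_i\frac{H_{ir}(u)}{H_{rr}(u)}\Bigr),\qquad v_i=u_i \ (i\neq r).
\]
Expanding shows that $f=\pm v_1^2\pm\cdots\pm v_r^2+\sum_{i,j>r}v_iv_j\widetilde H_{ij}(v)$, where the new block $(\widetilde H_{ij})$ is again symmetric and nondegenerate at $0$.

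The main obstacle is checking that $u\mapsto v$ is a legitimate change of coordinates. I would compute its Jacobian at $0$: it is triangular with diagonal entry $\sqrt{|H_{rr}(0)|}\neq0$ in position $r$ and $1$ elsewhere, so the inverse function theorem gives a local diffeomorphism. The same computation at $0$, which amounts to ordinary symmetric Gaussian elimination, shows that the remaining block stays nondegenerate. One must also shrink the neighborhood at each stage so that $H_{rr}$ keeps its sign, which is needed both for the square root to be smooth and for the change of variables to be invertible.

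After $n$ steps we have $f=\sum \epsilon_i v_i^2$ with each $\epsilon_i=\pm1$. Reordering the coordinates puts the negative signs first. Finally, Sylvester's law of inertia identifies the number of minus signs with the index $k$: in these coordinates the Hessian at $p$ is the diagonal matrix $\operatorname{diag}(2\epsilon_i)$, and the index does not depend on coordinates by the earlier exercise.
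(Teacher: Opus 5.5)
The paper does not prove the Morse Lemma itself; it only points to the books of Audin--Damian, Golubitsky--Guillemin, and Milnor. Your argument is correct and is essentially the classical proof in Milnor's \emph{Morse Theory}: you apply Hadamard's lemma twice to write $f-f(p)=\sum x_ix_jh_{ij}(x)$ with $h_{ij}(0)=\tfrac12\partial_i\partial_j f(0)$, complete the square inductively with smoothly varying coefficients (your Jacobian and Schur-complement checks, together with shrinking so $H_{rr}$ keeps its sign, are exactly what is needed), and use Sylvester's law of inertia to identify the number of minus signs with the index.
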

\begin{remark}
Recall from differential topology that if the first derivative of a function $f\co M\to \R$ is non-degenerate at a point $p$, that is $df_p\not=0$, then there are local coordinates where $f$ takes the form
\[
\R^n\to \R\co (x_1, \ldots, x_n)\mapsto x_1.
\]
Said another way, if the first derivative of $f$ at $p$ is non-degenerate, then $f$ is locally equivalent to the ``simplest" function with non-degenerate first derivative. 

One may think of the Morse lemma as a second derivative version of this. So if $df_p=0$ but $Hf_p$ is non-degenerate (that is the total second derivative is non-degenerate), then $f$ is locally equivalent to the ``simplest" function with vanishing first derivative and non-degenerate second derivative. 
\end{remark}
There are several proofs of the Morse Lemma; see, for example, \cite{AudinDamian2014, GolubitskyGuillemin1973, Milnor1963}. 

Given a Riemannian metric $g$ on a manifold $M$ and a function $f\co M\to \R$, the \dfn{gradient} of $f$ is the unique vector field $\nabla f$ such that 
\[
g(\nabla f, v)= df(v)
\]
for all vector fields $v$. 
\bhw
Prove that $\nabla f$ exists and is unique. \\Hint: Recall that $g$ is non-degenerate and so the map $T_pM\to T^*_pM\co v\mapsto \iota_v g$ is an isomorphism for all $p$, where $\iota_vg$ is the contraction of $v$ into $g$. That is, $\iota_v g$ is the $1$-form that takes a vector $w$ and gives the real number $g(v,w)$. 
\ehw
The \dfn{gradient flow} of $f$ is the flow of $-\nabla f$. Specifically, given a point $x\in M$ the \dfn{gradient flow line of $f$ through $x$} is 
\[
\gamma: \R\to M
\]
such that 
\begin{align*}
\gamma'(t)&=-\nabla f_{\gamma(t)}\\
\gamma(0)&=x.
\end{align*}
Since we are considering compact manifolds without boundary, the flow line is defined for all time. 

Recall that the gradient of a function points in the direction in which $f$ is increasing the fastest. Thus, the gradient flow is going in the direction of decreasing values of $f$. So if our manifold is the graph of some function $h$ and we think of $f$ as a height function on a manifold, then one can think of the gradient flow as the flow of water poured on the manifold. 

\bex
Consider the manifold $\R^n$ and the function 
\[f(x_1,\ldots, x_n)=-x_1^2-\cdots -x_k^2+x_{k+1}^2+\cdots x_n^2.
\]
We also choose the Euclidean metric 
\[
g=\sum_{i=1}^n dx_i\otimes dx_i.
\]
One easily computes that 
\[
df = -\sum_{i=1}^k 2x_i\, dx_i + \sum_{i=k+1}^n 2x_i\, dx_i
\]
and thus
\[
\nabla f = 2\left(- \sum_{i=1}^k x_i\frac \partial{\partial x_i} + \sum_{i=k+1}^n x_i \frac \partial {\partial x_i}\right).
\]
The gradient flow lines are indicated in Figure~\ref{fig:gf}. 
\begin{figure}[htb]{
\begin{overpic}
{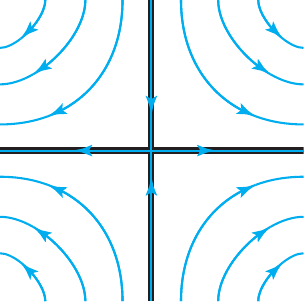}
\put(150, 69){$\R^k$}
\put(68, -12){$\R^{n-k}$}
\end{overpic}}
\caption{The gradient flow of $f(x_1,\ldots, x_n)=-x_1^2-\cdots -x_k^2+x_{k+1}^2+\cdots x_n^2$.}
\label{fig:gf}
\end{figure}  
\eex
\bex\label{ex:sphere}
Consider the unit sphere $S^2$ in $\R^3$ with the metric induced from the Euclidean metric on $\R^3$. If the function $f\co S^2\to \R$ is given by projection to the last coordinate, then we see that there are exactly two critical points $N=(0,0,1)$ and $S=(0,0,-1)$. It is clear that $N$ has index $2$ and $S$ has index $0$. Moreover, the gradient flow is shown in Figure~\ref{fig:sphere}. 
\begin{figure}[htb]{
\begin{overpic}
{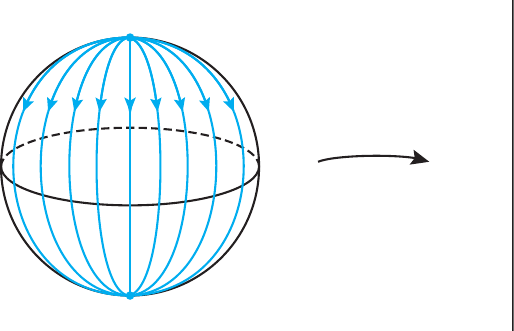}
\put(58, 148){$N$}
\put(58, 4){$S$}
\put(174, 90){$f$}
\put(255, 120){$\R$}
\put(-13, 120){$S^2$}
\end{overpic}}
\caption{The gradient flow of the height function on the sphere.}
\label{fig:sphere}
\end{figure}  
\eex
\bhw
Show that a flow line of the gradient in the previous example through the point $x=(\cos\theta,\sin\theta,0)$ is given by
\[
\gamma(t)=\left( \frac{\cos\theta}{\cosh t}, \frac{\sin\theta}{\cosh t}, \tanh t\right).
\]
Hint: Show that in stereographic coordinates given by 
\[
\phi(u,v)=\frac{1}{1+u^2+v^2}(2u, 2v, u^2+v^2-1)
\]
the metric is given by 
\[
\frac{4}{(1+u^2+v^2)^2}(du\otimes du+ dv\otimes dv)
\]
or in polar coordinates 
\[
\frac{4}{(1+r^2)^2}(dr\otimes dr+ r^2\, d\theta\otimes d\theta).
\]
Show that in these coordinates, $f$ is given by $r\mapsto \frac{r^2-1}{r^2+1}$ and 
\[
\nabla f= r\frac{\partial}{\partial r}. 
\]
Thus the flow line through $(1,\theta)$ is $t\mapsto (e^{-t}, \theta)$. 
\ehw
Here are a few simple facts about gradient flow lines that follow from general facts about flows. 
\bhw
Show that if $\gamma\co \R\to M$ is a gradient flow line, then $\gamma_a$ define by $\gamma_a(t)=\gamma(t+a)$ is also a gradient flow line (through the point $\gamma_a(0)=\gamma(a)$). 
\ehw
\bhw
If $\gamma$ and $\eta$ are gradient flow lines with the same image (this happens if and only if they have one point in common), then there is some $a$ such that $\gamma(t)=\eta(t+a)$. 
\ehw
\subsection{Spaces of gradient flow lines}
We will now define the Morse homology of a manifold. We begin by discussing the ``moduli space of gradient flow lines". Given a Morse function $f\co M\to \R$ and two critical points $p$ and $q$ of $f$ we define the space 
\begin{align*}
\mathcal{M}^p_q=\{ \gamma\co \R&\to M \text{ such that}\\
&\text{1)  $\gamma$ is a flow line of $-\nabla f$},\\
&\text{2) $\lim_{t\to -\infty} \gamma(t)=p$}, \text{ and}\\
&\text{3) $\lim_{t\to \infty}\gamma(t)=q$}
\}/\R
\end{align*}
where $\R$ acts on the gradient flow lines as indicated at the end of the last section. We can think of 
\[
\mathcal{M}^p_q = \text{ the space of unparameterized gradient flow lines from $p$ to $q$}.
\]
\bex
We will consider the function and metric in Example~\ref{ex:sphere} above. It is clear that there are only two critical points, $N$ and $S$. The only non-empty moduli space is 
\[
\mathcal{M}^N_S\cong S^1.
\]
To see this, notice that any gradient flow line intersects the equator of the sphere exactly once, and for every point on the equator, there is exactly one (unparameterized) flow line through that point. 
\eex
\bex\label{skewtorus}
We now consider a Morse function on the torus. One standardly draws the torus as, for example, the result of taking the circle of radius $1$ about $(0,2)$ in the $xz$-plane and rotating it about the $x$-axis, see Figure~\ref{fig:nonstorus}. If we restrict the projection map $\R^3\to\R\co (x,y,z)\mapsto z$ to this torus we will not get a ``generic" Morse function as discussed below, so in this example we will slightly skew this torus and let $f$ the restriction of the projection map to this torus. See Figure~\ref{fig:storus}.
\begin{figure}[htb]{
\begin{overpic}
{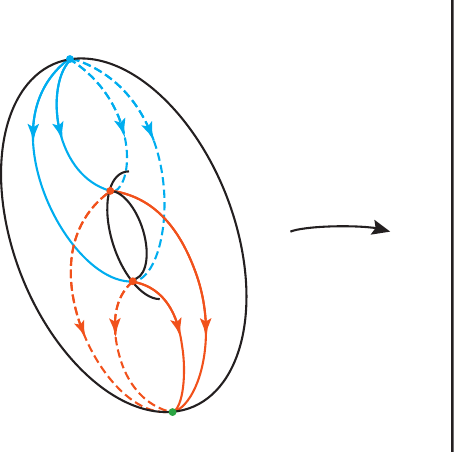}
\put(30, 195){$d$}
\put(46, 134){$c$}
\put(66, 68){$b$}
\put(80, 10){$a$}
\put(158, 118){$f$}
\put(225, 180){$z$-axis}
\end{overpic}}
\caption{The gradient flow of projection the the $z$-axis for the skew torus in $\R^3$.}
\label{fig:storus}
\end{figure}  
We note there are $4$ critical points. A maximum, and hence index $2$ critical point, at $d$, a minimum, and hence index $0$ critical point, at $a$, and two index $1$ critical points at $b$ and $c$.  A few gradient flow lines for $f$ are shown in the figure. As it is difficult to draw more flow lines in this figure, we represent the torus as a square with opposite edges identified in Figure~\ref{fig:ftorus}. 
\begin{figure}[htb]{
\begin{overpic}
{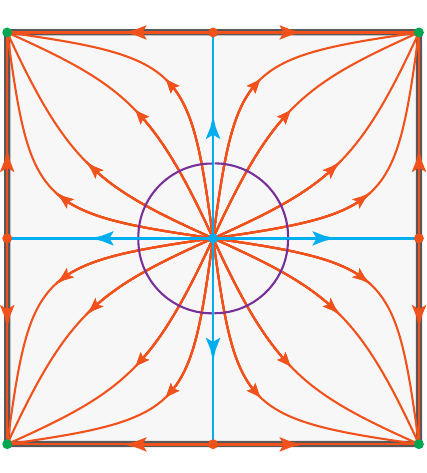}
\put(110, 120){$d$}
\put(100, 3){$c$}
\put(100, 218){$c$}
\put(-7, 110){$b$}
\put(207, 110){$b$}
\put(-5, 5){$a$}
\put(-5, 215){$a$}
\put(207, 5){$a$}
\put(207, 215){$a$}
\end{overpic}}
\caption{The torus represented as the square with opposite edges identified by translation. We see the gradient flow of the function $f$. The points in the purple curves are in one-to-one correspondence with flow lines from $d$ to $a$.}
\label{fig:ftorus}
\end{figure} 
There, we can draw more of the flow lines. We note that $\mathcal{M}^d_b, \mathcal{M}^d_c, \mathcal{M}^b_a,$ and $\mathcal{M}^c_a$ all consist of $2$ points, the first two spaces correspond to the blue lines and the second two correspond to the horizontal and, respectively, vertical orange lines. The spaces $\mathcal{M}^b_c$ and $\mathcal{M}^c_b$ are empty, and $\mathcal{M}^d_a$ is the union of $4$ open intervals, which are indicated by the purple curves in the figure. 
\eex
\begin{tcolorbox}[title={Morse Theorem 1 (Transversality)}]
For a generic choice of metric $g$ on $M$ and Morse function $f\co M\to \R$ the space 
\[
\mathcal{M}^p_q
\]
is a manifold of dimension $\idx p- \idx q-1$.
\end{tcolorbox}
The next example shows that the above theorem is not true for any metric and Morse function. 
\bex
In Figure~\ref{fig:nonstorus} we see the symmetric torus in $\R^3$. If $f$ is the restriction of the $z$-coordinate to the torus then we see the critical points and some of the gradient flow lines in the figure. 
\begin{figure}[htb]{
\begin{overpic}
{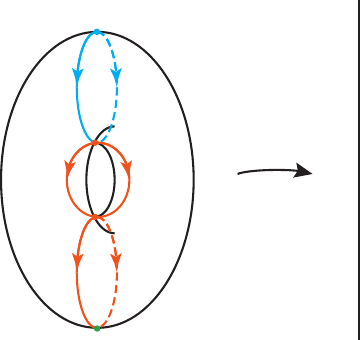}
\put(44, 152){$d$}
\put(44, 102){$c$}
\put(44, 45){$b$}
\put(44, -3){$a$}
\put(128, 88){$f$}
\put(180, 130){$z$-axis}
\end{overpic}}
\caption{The symmetric torus in $\R^3$.}
\label{fig:nonstorus}
\end{figure}  
Note that $\mathcal{M}^c_b$ consists of two points, but if we had chosen a generic Morse function and metric (here the metric is induced from $\R^3$) in the sense of Morse Theorem~1, then $\mathcal{M}^c_b$ would have dimension $-1$ and hence be empty. Thus it is clear that Morse Theorem~1 is not true for all Morse functions and metrics. 
\eex

We now move to the second main theorem in the case of Morse homology.
\begin{tcolorbox}[title={Morse Theorem 2 (Orientability)}]
If $M$ is oriented and we choose orientations on all the negative eigenspaces of $(T_pM, Hf_p)$ for all critical points $p$, then the $\mathcal{M}^p_q$ are ``consistently" oriented for all $p$ and $q$. 
\end{tcolorbox}
We will clarify what we mean by ``consistently" below when discussing ``gluing'', but now turn to the ``compactness" theorem (or, as mentioned above, it can really be thought of as the ``lack of compactness" theorem). 
\begin{tcolorbox}[title={Morse Theorem 3 (Compactness)}]
If $\{\gamma_i\}$ is a Cauchy sequence in $\mathcal{M}_q^p$ that does not converge to a point in $\mathcal{M}_q^p$, then $\{\gamma_i\}$ converges to a ``broken flow line". 
\end{tcolorbox}
We say that $\{\gamma_i\}$ converges to a broken flow line if there are critical points 
\[
p=r_1, r_2, \ldots, r_l=q
\] 
such that $\idx r_j>\idx r_{j+1}$ and there are elements $\widetilde\gamma^j\in \mathcal{M}^{r_j}_{r_{j+1}}$ such that for all $\epsilon>0$ there is some $N$ such that for all $i>N$ we have 
\[
\img(\gamma_i) \subset \text{$\epsilon$-neighborhood of } \left( \bigcup_{j=1}^{l-1} \img(\widetilde{\gamma_j})\right)
\]
and 
\[
 \left( \bigcup_{j=1}^{l-1} \img(\widetilde{\gamma_j})\right) \subset  \text{$\epsilon$-neighborhood of } \img(\gamma_i). 
\]
\bex
In Figure~\ref{fig:bfl} we see a sequence of gradient flow lines $\gamma_i$ shown as orange curves converging to a broken flow line given as the union of $\widetilde\gamma_1$, and $\widetilde\gamma_2$
\begin{figure}[htb]{
\begin{overpic}
{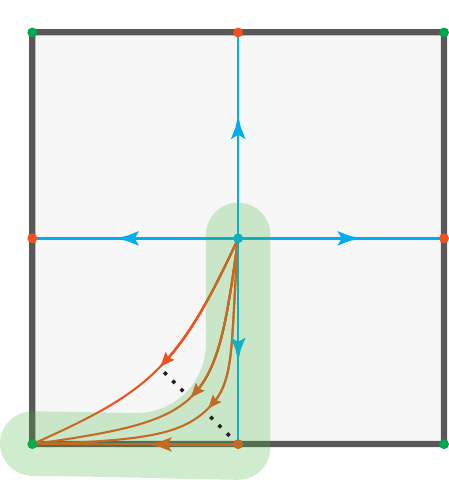}
\put(105, 120){$d$}
\put(112, 5){$c$}
\put(112, 222){$c$}
\put(3, 114){$b$}
\put(220, 114){$b$}
\put(4, 10){$a$}
\put(4, 218){$a$}
\put(220, 5){$a$}
\put(220, 218){$a$}
\put(65, 55){$\gamma_1$}
\put(94, 23){$\gamma_i$}
\put(120, 60){$\widetilde\gamma_1$}
\put(60, 5){$\widetilde\gamma_2$}
\end{overpic}}
\caption{A sequence of flow lines converging to a broken flow line.}
\label{fig:bfl}
\end{figure} 
The light green region is a neighborhood of the union of the images of $\widetilde\gamma_1$, and $\widetilde\gamma_2$, and we see that for large enough $i$, the images of the $\gamma_i$ are contained in the neighborhood. A similar figure will show that for large $i$ the images of $\widetilde\gamma_1$, and $\widetilde\gamma_2$ will be in a small neighborhood of the images of the $\gamma_i$. 
\eex
\begin{corollary}
If $\idx p=\idx q+1$, then $\mathcal{M}^p_q$ is a compact $0$-manifold. 
\end{corollary}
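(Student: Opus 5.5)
By Morse Theorem~1 (for generic $(f,g)$), $\mathcal{M}^p_q$ is a manifold of dimension $\idx p-\idx q-1=0$, so it is a discrete set of points. A discrete set is a compact $0$-manifold exactly when it is finite. It therefore suffices to show that every sequence in $\mathcal{M}^p_q$ has a subsequence converging to a point of $\mathcal{M}^p_q$. I would then conclude finiteness: an infinite discrete set contains a sequence of distinct points with no convergent subsequence, since in a discrete space a convergent sequence is eventually constant.

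I plan to argue by contradiction using Morse Theorem~3. First, I would extract a Cauchy subsequence from an arbitrary sequence $\{\gamma_i\}$. I intend to make this precise by measuring distance between unparameterized flow lines using the Hausdorff distance of their closures, $\img(\gamma)\cup\{p,q\}$, which are compact subsets of $M$. The space of compact subsets of a compact metric space is compact in the Hausdorff metric, so some subsequence is Cauchy.

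If that subsequence does not converge in $\mathcal{M}^p_q$, Morse Theorem~3 says it converges to a broken flow line through critical points
\[
p=r_1,r_2,\ldots,r_l=q
\]
with $\idx r_j>\idx r_{j+1}$ and $\widetilde\gamma^j\in\mathcal{M}^{r_j}_{r_{j+1}}$, where $l\ge 3$ because the line is genuinely broken. Since each $\mathcal{M}^{r_j}_{r_{j+1}}$ is nonempty, Morse Theorem~1 forces its dimension $\idx r_j-\idx r_{j+1}-1$ to be at least $0$, so $\idx r_j\ge \idx r_{j+1}+1$. Summing over the $l-1\ge 2$ pieces gives
\[
\idx p-\idx q\ge l-1\ge 2,
\]
which contradicts $\idx p=\idx q+1$. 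Hence every Cauchy subsequence converges to a point of $\mathcal{M}^p_q$.

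The main obstacle is the bookkeeping that connects Theorem~3's formulation to sequential compactness of $\mathcal{M}^p_q$ with its manifold topology. Theorem~3 is phrased for Cauchy sequences, so I must check two things. One is that the metric used in Theorem~3 (closeness of images) induces the same topology as the one on the $0$-manifold from Theorem~1. The other is the degenerate case $l=2$: there I need to rule out a broken line with a single piece being distinct from convergence, and I would do this by observing that a single unbroken piece $\widetilde\gamma^1\in\mathcal{M}^p_q$ is exactly a limit in $\mathcal{M}^p_q$. I expect to treat both points as part of the meaning of ``converges'' in Theorem~3 rather than prove them from scratch.
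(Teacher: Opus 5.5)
Your argument is correct and is the same as the paper's. By Morse Theorem~3, a non-convergent Cauchy sequence would limit to a broken flow line through some critical point of index strictly between $\idx p$ and $\idx q$, and no such point exists when $\idx p=\idx q+1$; your additional steps (extracting a Cauchy subsequence via the Hausdorff metric on closures of images, and deducing finiteness from discreteness) make explicit the passage from ``Cauchy sequences converge'' to genuine compactness, which the paper's one-line proof leaves implicit.
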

\begin{proof}
There are no critical points with index strictly between $\idx p$ and $\idx q$, so any Cauchy sequence in $\mathcal{M}^p_q$ must converge to a point in $\mathcal{M}^p_q$. 
\end{proof}
\begin{corollary}
If $\idx p= \idx q+2$, then 
\[
\overline{\mathcal{M}^p_q}\subset\left(\mathcal{M}_q^p 
\cup \left(
\bigcup_{\idx r=\idx p -1} \left(
\mathcal{M}^p_r\times \mathcal{M}^r_q
\right)
\right)
\right),
\]
where $\overline{\mathcal{M}^p_q}$ is the ``completion" formed by adding all broken trajectories. 
\end{corollary}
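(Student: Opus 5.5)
The plan is to combine Morse Theorem~3 (Compactness) with the index constraint in its definition of a broken flow line. This is the same counting argument as in the previous corollary, now allowing exactly one intermediate critical point.

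First, I will describe the points of $\overline{\mathcal{M}^p_q}\setminus \mathcal{M}^p_q$. By definition, $\overline{\mathcal{M}^p_q}$ adds to $\mathcal{M}^p_q$ the limits of Cauchy sequences $\{\gamma_i\}$ in $\mathcal{M}^p_q$ that do not converge inside $\mathcal{M}^p_q$. By Morse Theorem~3, each such sequence converges to a broken flow line. This means there are critical points $p=r_1, r_2,\ldots, r_l=q$ with $\idx r_j>\idx r_{j+1}$, together with $\widetilde\gamma^j\in\mathcal{M}^{r_j}_{r_{j+1}}$, whose images are Hausdorff-close to those of the $\gamma_i$.

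Second, I will bound the length $l$ of this chain. The indices strictly decrease from $\idx p$ to $\idx q=\idx p-2$, so the chain contains at most one intermediate point. If $l=2$, the limit is a single unbroken flow line from $p$ to $q$, hence a point of $\mathcal{M}^p_q$. This contradicts our assumption, or it simply places the limit in $\mathcal{M}^p_q$ anyway. If $l=3$, the intermediate point $r=r_2$ satisfies $\idx p>\idx r>\idx p-2$, so $\idx r=\idx p-1$. The limit is then the pair $(\widetilde\gamma^1,\widetilde\gamma^2)\in \mathcal{M}^p_r\times\mathcal{M}^r_q$. Taking the union over all such $r$ gives the stated containment.

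The main obstacle is making the identification of a limit with an element of the product precise. Morse Theorem~3 only asserts Hausdorff-type convergence of images, so I need the broken flow line to determine the pair $(\widetilde\gamma^1,\widetilde\gamma^2)$ as unparameterized trajectories. This follows because distinct flow lines in $\mathcal{M}^p_r$ have disjoint images; this is the exercise that two flow lines sharing a point differ by a time shift. Their union of images therefore determines each piece.

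I also want to know that each factor is nonempty only in the expected way. By Morse Theorem~1, $\mathcal{M}^p_r$ and $\mathcal{M}^r_q$ have dimension $0$. By the previous corollary, they are compact, so each product is a finite set. This is consistent with, though not needed for, the inclusion.
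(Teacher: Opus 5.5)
Your proposal is correct and is essentially the argument the paper intends; the paper leaves this corollary as an exercise modeled on the preceding one. Morse Theorem~3 forces a non-convergent Cauchy sequence to limit to a broken flow line, and the strict decrease of indices from $\idx p$ to $\idx p-2$ allows at most one intermediate critical point $r$, necessarily with $\idx r=\idx p-1$. Your extra check that the broken flow line determines the pair $(\widetilde\gamma^1,\widetilde\gamma^2)$, because distinct flow lines have disjoint images, is rigor the paper does not spell out.
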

\bhw
Prove this corollary. 
\ehw

We finally turn to the final main theorem for Morse homology. 
\begin{tcolorbox}[title={Morse Theorem 4 (Gluing)}]
Let $p$, $r$, and $q$ be critical points with 
\[
\idx p=\idx r+1=\idx q+2.
\]
There is an orientation-preserving diffeomorphism 
\[
\Phi\co \left( (0,\epsilon)\times \mathcal{M}^p_r\times\mathcal{M}^r_q\right) \to \mathcal{M}^p_q
\]
such that for all $\gamma\in \mathcal{M}^p_r$, $\eta\in \mathcal{M}^r_q$, and sequence $\{s_i\}$ in $(0,\epsilon)$ converging to $0$ we have that $\Phi(\gamma,\eta,s_i)$ converges to the broken flow line $\gamma\cup \eta$. 
\end{tcolorbox}
\begin{remark}
Here is where we need the orientations from Morse Theorem 2 to be ``consistent". This means that we can relate the orientations on the moduli spaces as indicated in this theorem.
\end{remark}
\bex
We consider the compactification of the moduli spaces for the skew torus in Example~\ref{skewtorus} above. We note that $\mathcal{M}^d_a$ contains an open interval corresponding to the pink curves in Figures~\ref{fig:cpt}.
\begin{figure}[htb]{
\begin{overpic}
{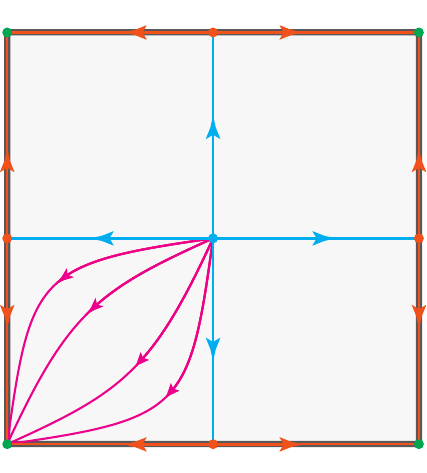}
\put(110, 120){$d$}
\put(100, 3){$c$}
\put(100, 218){$c$}
\put(-7, 110){$b$}
\put(207, 110){$b$}
\put(-5, 5){$a$}
\put(-5, 215){$a$}
\put(207, 5){$a$}
\put(207, 215){$a$}
\put(110, 60){$\gamma_1$}
\put(110, 162){$\gamma_2$}
\put(47, 122){$\gamma_3$}
\put(150, 122){$\gamma_4$}
\put(210, 162){$\delta_1$}
\put(210, 60){$\delta_2$}
\put(150, 1){$\delta_3$}
\put(47, 1){$\delta_4$}
\end{overpic}}
\caption{The compactification of the gradient flow lines in the lower left corner are the broken flow lines $\gamma_1\cup \delta_4$ and $\gamma_3\cup \delta_2$.}
\label{fig:cpt}
\end{figure} 
The compactification of this interval consists of the broken flow lines $\gamma_1\cup \delta_4$ and $\gamma_3\cup \delta_2$. More generally we see that 
\begin{align*}
\mathcal{M}^d_b&=\{\gamma_1,\gamma_2\}, \quad \mathcal{M}^d_c=\{ \gamma_3, \gamma_4\},\\
\mathcal{M}^b_a&=\{\delta_3,\delta_4\}, \text{ and }\quad \mathcal{M}^c_a=\{ \delta_1,\delta_2\},
\end{align*}
and 
\[
\partial \overline{\mathcal{M}^d_a}= (\mathcal{M}^d_b\times\mathcal{M}^b_a)\cup (\mathcal{M}^d_c\times \mathcal{M}^c_a).
\]
So we saw that $\mathcal{M}^d_a$ consists of $4$ open intervals and thus its closure consists of $4$ closed intervals with boundary being $8$ broken flow lines. 
\eex
This example indicates that the proof of the following result. 
\begin{corollary}
If $k=\idx p=\idx q+2$, 
then $\overline{\mathcal{M}^p_q}$ is a compact $1$-manifold with boundary 
\[
\bigcup_{r \text{ of index } k -1} \left(
\mathcal{M}^p_r\times \mathcal{M}^r_q
\right).
\]
\end{corollary}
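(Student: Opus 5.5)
By Morse Theorem~1, $\mathcal{M}^p_q$ is a manifold of dimension $\idx p-\idx q-1=1$. The plan is to give $\overline{\mathcal{M}^p_q}=\mathcal{M}^p_q\cup\bigcup_{\idx r=k-1}\mathcal{M}^p_r\times\mathcal{M}^r_q$ a topology and a boundary chart at each added point. Then we show the result is compact and Hausdorff. Most of the content is in the second corollary above (the inclusion of the completion) together with Morse Theorem~4.

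\emph{Step 1: charts.} At an interior point we use the $1$-manifold structure from Morse Theorem~1. For each $\idx r=k-1$, the preceding corollary shows $\mathcal{M}^p_r$ and $\mathcal{M}^r_q$ are compact $0$-manifolds, hence finite sets. So $\bigcup_r \mathcal{M}^p_r\times\mathcal{M}^r_q$ is a finite set of broken flow lines. For each broken line $(\gamma,\eta)$, Morse Theorem~4 gives an embedding $s\mapsto\Phi(s,\gamma,\eta)$ of $(0,\epsilon)$ into $\mathcal{M}^p_q$ whose end converges to $\gamma\cup\eta$. Extending by $0\mapsto(\gamma,\eta)$ gives a half-open chart $[0,\epsilon)\to\overline{\mathcal{M}^p_q}$.

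We topologize $\overline{\mathcal{M}^p_q}$ using the notion of convergence in Morse Theorem~3: a neighborhood of a broken line consists of the flow lines whose images are $\epsilon$-close, in both directions of containment, to its image. Transition maps are then identities on the open part, so they are smooth. Hausdorffness follows because distinct broken lines have images at positive Hausdorff distance, and an unbroken line has an image at positive Hausdorff distance from any broken image. The latter holds since an unbroken line does not pass through $r$.

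\emph{Step 2: compactness.} Take any sequence in $\overline{\mathcal{M}^p_q}$. Since the broken part is finite, we may assume the sequence lies in $\mathcal{M}^p_q$. Morse Theorem~3, refined by the second corollary above, says a subsequence either converges in $\mathcal{M}^p_q$ or converges to some element of $\mathcal{M}^p_r\times\mathcal{M}^r_q$. Either way the limit lies in $\overline{\mathcal{M}^p_q}$. (Strictly, Theorem~3 is stated for Cauchy sequences. One first extracts a subsequence that converges in the Hausdorff sense, using compactness of $M$ and the fact that images of flow lines from $p$ to $q$ lie in the compact set $f^{-1}[f(q),f(p)]$, and then applies the theorem.)

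\emph{Step 3: the main obstacle.} We must show that the chart from Morse Theorem~4 captures \emph{every} sequence converging to $\gamma\cup\eta$. Equivalently, $\Phi$ is surjective onto a neighborhood of the broken line: every flow line sufficiently close to $\gamma\cup\eta$ lies in the image of $\Phi$. Without this, a second end of $\mathcal{M}^p_q$ could limit to the same broken line, or an end could oscillate, and the broken line would not be a manifold boundary point. My first attempt is to read surjectivity into Theorem~4's assertion that $\Phi$ is a diffeomorphism onto a neighborhood of the end. Failing that, I would argue via the Morse Lemma (Theorem~\ref{ML}) near $r$. A nearby flow line must pass close to $r$, entering along the stable direction and leaving along the unstable direction. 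Since $\idx r=k-1$ and both relevant spaces are finite, the nearby flow lines are parametrized by a single parameter, namely the signed distance from the stable manifold of $r$ in a level set just above $r$. That parameter identifies a one-sided collar.

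Finally, a compact Hausdorff space locally modeled on $\R$ or $[0,\infty)$, with its boundary points exactly the added broken lines, is a compact $1$-manifold with boundary $\bigcup_{\idx r=k-1}\mathcal{M}^p_r\times\mathcal{M}^r_q$, as claimed.
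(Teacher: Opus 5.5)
Your proposal is correct and is essentially the argument the paper intends. The paper leaves this corollary as an exercise, and the skew-torus example points to exactly your combination: Morse Theorem~1 for dimension one, Morse Theorem~3 with the preceding corollary (the only added limits are once-broken lines through index $k-1$ points), and Morse Theorem~4 for a collar at each such broken line. The local surjectivity of gluing you isolate in Step~3 is what the paper implicitly builds into calling $\Phi$ a diffeomorphism.

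One small correction to your fallback argument: a ``signed distance'' from the stable manifold of $r$ only makes sense when $\idx r=1$, because in general that stable manifold has codimension $k-1$ in the level set. The collar parameter should instead be the distance to the point where $\gamma$ crosses that level, measured inside the $(k-1)$-dimensional slice of the unstable manifold of $p$; showing that this is a coordinate on the nearby arcs takes a $\lambda$-lemma type argument.
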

\bhw
Prove this corollary. 
\ehw

\subsection{Constructing Morse homology}

Let $f\co M\to \R$ be a Morse function and $g$ a metric on $M$ so that all the theorems in the past section hold. We define the chain groups for Morse homology to be
\[
C_i = \text{free abelian group generated by critical points of $\idx$ $i$}.
\]
We then define 
\[
\partial_i\co C_i\to C_{i-1}
\]
on the generators $p\in C_i$ by 
\[
\partial_i p= \sum_{q \text{ of index } i-1} |\mathcal{M}^p_q| \,\, q
\]
where $|\mathcal{M}^p_q|$ is the signed count of points in $\mathcal{M}^p_q$.

Notice that Morse Theorem~1 and~3 above show that $\mathcal{M}^p_q$ is a compact $0$-manifold, and Morse Theorem~2 shows that $\mathcal{M}^p_q$ is oriented. So $\partial_i q$ is well-defined. We note that if we considered $C_i$ to be a vector space over $\Z_2$ then we could just take the mod $2$ count of points in $\mathcal{M}^p_q$, and then we would not need Morse Theorem~2 (or the part of Morse Theorem~4 concerning orientations). 
\begin{theorem}\label{d2}
With the notation above we have 
\[
\partial_{i-1}\circ \partial_i=0.
\]
\end{theorem}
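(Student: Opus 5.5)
It suffices to check $\partial_{i-1}\circ\partial_i$ on a generator $p\in C_i$. By linearity and the definition of $\partial$,
\[
\partial_{i-1}\partial_i p=\sum_{q\text{ of index } i-2}\Bigl(\sum_{r\text{ of index } i-1}|\mathcal{M}^p_r|\,|\mathcal{M}^r_q|\Bigr)q .
\]
So the claim reduces to showing that, for each fixed $q$ with $\idx q=\idx p-2$, the integer $\sum_{r}|\mathcal{M}^p_r|\,|\mathcal{M}^r_q|$ vanishes. This sum is finite because there are finitely many critical points, since $M$ is compact and critical points of a Morse function are isolated by the Morse Lemma. Each factor is finite by Morse Theorems~1 and~3 (the first corollary of Morse Theorem~3).

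The key observation is that this sum is the signed count of the points of
\[
\bigcup_{\idx r=\idx p-1}\mathcal{M}^p_r\times\mathcal{M}^r_q ,
\]
where a pair $(\gamma,\eta)$ carries the product sign. By the last corollary above, which combines Morse Theorems~1, 3 and~4, $\overline{\mathcal{M}^p_q}$ is a compact $1$-manifold whose boundary is exactly this union of broken flow lines. A compact $1$-manifold with boundary is a finite disjoint union of circles and closed arcs, so its boundary points come in pairs, namely the two ends of each arc. Over $\Z_2$ this already gives the result: the count is even, so $\partial\circ\partial=0$.

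For integer coefficients, I would use Morse Theorem~2 to orient the $1$-manifold $\mathcal{M}^p_q$ and hence each arc of $\overline{\mathcal{M}^p_q}$. The two ends of an oriented arc carry opposite boundary orientations, one incoming and one outgoing. The gluing map $\Phi\co(0,\epsilon)\times\mathcal{M}^p_r\times\mathcal{M}^r_q\to\mathcal{M}^p_q$ of Morse Theorem~4 is an orientation-preserving diffeomorphism onto a collar of the end at $(\gamma,\eta)$. Here the gluing parameter $s\to 0$ points outward, so the induced boundary orientation at $(\gamma,\eta)$ agrees, up to one universal sign, with the product orientation $\operatorname{sign}(\gamma)\operatorname{sign}(\eta)$. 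That sign depends only on the convention and not on the particular $r$, $\gamma$ or $\eta$. Hence the sum of the product signs over all boundary points equals $\pm$ the sum of the boundary orientations of a compact oriented $1$-manifold, which is $0$.

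The main obstacle is this sign bookkeeping: checking that the orientation-preserving statement in Morse Theorem~4, with the $(0,\epsilon)$ factor placed first, really matches the boundary orientation convention up to a sign that is the same at every end. Once that consistency is in hand, the rest is the classification of compact $1$-manifolds.
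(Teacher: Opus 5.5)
Your proposal is correct and follows the paper's proof. You expand $\partial_{i-1}\partial_i p$, identify the coefficient of $q$ with the signed count of $\bigcup_r \mathcal{M}^p_r\times\mathcal{M}^r_q=\partial\overline{\mathcal{M}^p_q}$, and use that a compact oriented $1$-manifold has zero signed boundary count; your check that the orientation-preserving gluing map of Morse Theorem~4 makes the product signs agree with the boundary orientation up to one universal sign spells out what the paper's proof takes for granted.
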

\begin{proof}
We note that 
\begin{align*}
\partial(\partial p)&=\partial \left(\sum_{r \text{ of index } i-1} |\mathcal{M}^p_r| r\right)\\
&=\sum_{q \text{ of index } i-2} |\mathcal{M}^p_r|  \left( \sum_{r \text{ of index } i-1} |\mathcal{M}^r_q| q\right)\\
&=\sum_{q \text{ of index } i-2} \left( \sum_{r \text{ of index } i-1} |\mathcal{M}^p_r| |\mathcal{M}^r_q|\right) q.
\end{align*}
So the coefficient on $q$ is $\sum_{r \text{ of index } i-1} |\mathcal{M}^p_r| |\mathcal{M}^r_q|$, but note this is exactly 
\[
\left|\bigcup_{r\in R} \left(
\mathcal{M}^p_r\times \mathcal{M}^r_q
\right)\right| = |\partial \overline{\mathcal{M}^p_r}|.
\]
Since $\overline{\mathcal{M}^p_r}$ is a compact $1$-manifold, we know the signed count of its boundary components is $0$. Thus $\partial\circ \partial =0$.
\end{proof}
We now give a ``picture proof" of this result. This is the version most people would give!
\begin{proof}[Picture Proof]
We begin by noting on the left-hand side of Figure~\ref{fig:picturepf} we see how $q$ would show up in $\partial^2 p$. That is, it will show up as a broken flow line. 
\begin{figure}[htb]{
\begin{overpic}
{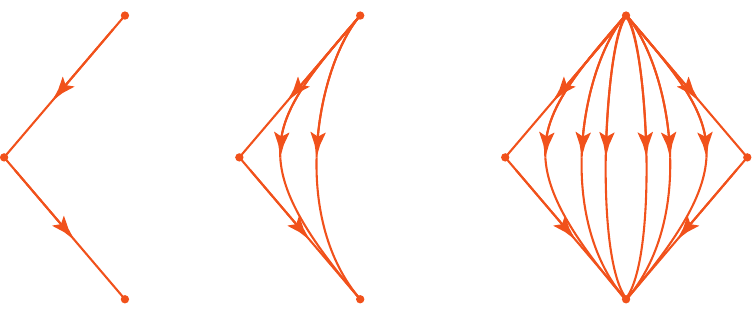}
\put(58, 151){$p$}
\put(-7, 75){$r$}
\put(58, -3){$q$}
\put(171, 151){$p$}
\put(105, 75){$r$}
\put(171, -3){$q$}
\put(298, 151){$p$}
\put(234, 75){$r$}
\put(298, -3){$q$}
\put(364, 75){$r'$}
\end{overpic}}
\caption{The compactification of the gradient flow lines in the lower left corner are the broken flow lines $\gamma_1\cup \delta_4$ and $\gamma_3\cup \delta_2$.}
\label{fig:picturepf}
\end{figure} 
But Morse Theorem~4 says that this will be a boundary component of $\overline{\mathcal{M}^p_q}$ as indicated in the middle of the figure. This component of $\overline{\mathcal{M}^p_q}$ is an oriented $1$-dimensional manifold. So this component of $\overline{\mathcal{M}^p_q}$ will look like the figure on the right of Figure~\ref{fig:picturepf}. So you see if $q$ is in $\partial^2 p$ then it will show up again with the opposite sign. Some people will just draw the diagram on the right of Figure~\ref{fig:picturepf} and call that the proof that $\partial^2=0$, and once you have thought through it, it is a great proof!
\end{proof}

We can now define the \dfn{$i^\text{th}$ Morse homology of $M$ to be}
\[
H_i(M)= \frac{\ker\left(\partial\co C_i\to C_{i-1}\right)}{\img\left(\partial\co C_{i+1}\to C_i\right)}.
\]
\bhw
Show that $H_i(M)$ agrees with the $i^\text{th}$ homology of $M$ as is usually defined in algebraic topology. \\ Hint: This might be more challenging than some previous exercises. Maybe read Section~\ref{aa} first and think about cellular homology and how you might get a $CW$ structure on $M$ from a Morse function. 
\ehw
Of course, if you finish this exercise, we know that the Morse homology does not depend on the choice of Morse function or the Riemannian metric, but we now turn to a more direct proof of this fact. 

\subsection{Invariance of Morse homology}
Suppose that $(f_0,g_0)$ and $(f_1, g_1)$ are two pairs of Morse functions and Riemannian metrics so that all the theorems in the previous section hold. Each $(f_i, g_i)$ gives us a chain complex $(C^i,\partial^i)$. We want to build a chain map
\[
\Phi\co (C^0,\partial^0)\to (C^1,\partial^1)
\]
that will induce an isomorphism on homology. To this end, let 
\begin{align*}
\Gamma= \{(f_t,g_t)\co& t\in \R \text{ such that }\\
&f_t=f_0 \text{ and } g_t=g_0 \text{ for } t\leq 0\\
&f_t=f_1 \text{ and } g_t=g_1 \text{ for } t\geq 1\}.
\end{align*}
We note that the space of functions and the space of metrics are contractible, so such a $\Gamma$ exists. Now let $V_t$ be the gradient $\nabla_{g_t} f_t$ of $f_t$ with respect to the metric $g_t$ and set $V$ to be the time-dependent vector field such that at time $t$ the vector field $V$ is $-V_t$:
\[
V=\{-V_t\}.
\]
A flow line of $V$ is a map $\gamma\co \R\to M$ such that $\gamma'(t)=(V_t)_{\gamma(t)}$. 
Now given a critical point $p$ of $f_0$ and a critical point $q$ of $f_1$ we set 
\begin{align*}
\widetilde{\mathcal{M}}^p_q=\{ \gamma\co \R&\to M \text{ such that}\\
&\text{1)  $\gamma$ is a flow line of $V$},\\
&\text{2) $\lim_{t\to -\infty} \gamma(t)=p$}, \text{and}\\
&\text{3) $\lim_{t\to \infty}\gamma(t)=q$}
\}.
\end{align*}
We note that since $V$ is a time-dependent vector field, there is no $\R$ action on $\widetilde{\mathcal{M}^p_q}$ as there was above. We now have analogs of Morse Theorems~1 through~4.

\begin{tcolorbox}[title={Morse Theorem 1' (Transversality)}]
For a generic choice of $\Gamma$ the space 
\[
\widetilde{\mathcal{M}}^p_q
\]
is a manifold of dimension $\idx p- \idx q$.
\end{tcolorbox}
We note that there is no $-1$ as in Morse Theorem~1 since there is no $\R$ action on flow lines of the time-dependent vector field $V$. 
\begin{tcolorbox}[title={Morse Theorem 2' (Orientability)}]
If $M$ is oriented and we choose orientations on all the negative eigenspaces of the critical points of $f_0$ and $f_1$, then the $\widetilde{\mathcal{M}}^p_q$ are ``consistently" oriented for all $p$ and $q$. 
\end{tcolorbox}
The meaning of ``consistently" is similar to its meaning above. In particular, it means the orientations in the gluing theorem below are consistent.
We now move to the compactness theorem.
\begin{tcolorbox}[title={Morse Theorem 3' (Compactness)}]
If $\{\gamma_i\}$ is a Cauchy sequence in $\widetilde{\mathcal{M}}_q^p$ that does not converge to a point in $\widetilde{\mathcal{M}}_q^p$, then $\{\gamma_i\}$ converges to ``broken flow line". 
\end{tcolorbox}
The notion of convergence is similar to the one above, but slightly different. 
We say that $\{\gamma_i\}$ converges to a broken flow line if there are critical points $p=p_1, p_2, \ldots, p_k$ of $f_0$ and critical points $p_{k+1}, \ldots, p_l=q$ of $f_1$ such that the indices are strictly decreasing except for possibly $p_k$ and $p_{k+1}$ which can have the same index, and elements $\widetilde{\gamma}_j\in\mathcal{M}(f_0)^{p_j}_{p_{j+1}}$ for $j<k$, $\widetilde{\gamma}_j\in\mathcal{M}(f_1)^{p_j}_{p_{j+1}}$ for $j>k$, and $\widetilde{\gamma}_k\in \widetilde{\mathcal{M}}^{p_k}_{p_{k+1}}$ such that the $\gamma_i$ converge to $\{\widetilde{\gamma}_j\}_{j=1}^l$ as in Morse Theorem~3.

We now turn to the ``gluing" theorem, but state it a bit vaguely and leave it to the reader to see how to adopt the statement of Morse Theorem~4 to the current situation. 
\begin{tcolorbox}[title={Morse Theorem 4' (Gluing)}]
Can ``undo" breaking with index difference $1$. 
\end{tcolorbox}
We now have the immediate corollaries whose proofs are very similar to the analogous ones in the previous section.
\begin{corollary}
If $p$ is a critical point of $f_0$, $q$ is a critical point of $f_1$ such that $\idx p = \idx q$, then $\widetilde{\mathcal{M}}^p_q$ is a compact $0$-manifold. 
\end{corollary}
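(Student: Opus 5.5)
By Morse Theorem~1', for a generic choice of $\Gamma$ the space $\widetilde{\mathcal{M}}^p_q$ is a manifold of dimension $\idx p-\idx q=0$. So it is a discrete set of points. Morse Theorem~2' orients it, so each point carries a sign. It remains to show compactness, which for a $0$-manifold means finiteness. The argument mirrors the first corollary after Morse Theorem~3: we show that no sequence in $\widetilde{\mathcal{M}}^p_q$ can fail to converge inside the space.

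Suppose $\{\gamma_i\}$ is a Cauchy sequence in $\widetilde{\mathcal{M}}^p_q$ that does not converge in $\widetilde{\mathcal{M}}^p_q$. By Morse Theorem~3' it converges to a broken flow line through critical points $p=p_1,\ldots,p_k$ of $f_0$ and $p_{k+1},\ldots,p_l=q$ of $f_1$. Since the sequence has no limit in the space, the breaking is genuine, so $l\ge 3$. The indices strictly decrease except possibly between $p_k$ and $p_{k+1}$. Hence
\[
\idx p-\idx q=\sum_{j=1}^{l-1}\bigl(\idx p_j-\idx p_{j+1}\bigr)\ge l-2\ge 1,
\]
which contradicts $\idx p=\idx q$.

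A cleaner way to see this is through the pieces themselves. Each piece $\widetilde\gamma_j$ for $j\neq k$ lies in an unparametrized moduli space $\mathcal{M}(f_0)$ or $\mathcal{M}(f_1)$. By Morse Theorem~1 such a space is nonempty only when its dimension $\idx p_j-\idx p_{j+1}-1$ is at least $0$. The middle piece lies in $\widetilde{\mathcal{M}}^{p_k}_{p_{k+1}}$, which is nonempty only when $\idx p_k-\idx p_{k+1}\ge 0$. Summing these inequalities forces $\idx p-\idx q\ge l-2$, as above.

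Every sequence therefore has a convergent subsequence in $\widetilde{\mathcal{M}}^p_q$. (Strictly, Theorem~3' is phrased for Cauchy sequences, so we implicitly use that the compactified space $\overline{\widetilde{\mathcal{M}}^p_q}$ makes sense and that sequences have Cauchy subsequences there.) A compact $0$-manifold is a finite set of points, which proves the corollary.

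The only delicate step is the index bookkeeping. Genericity (Morse Theorem~1 for $(f_0,g_0)$ and $(f_1,g_1)$, and Theorem~1' for $\Gamma$) guarantees that each piece of a broken line sits in a moduli space of nonnegative expected dimension. This is exactly what rules out breaking when the total index difference is zero.
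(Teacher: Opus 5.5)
Your proposal is correct and is essentially the paper's argument. The paper only says the proof is ``very similar'' to the earlier corollary: Theorem~1' gives dimension $\idx p-\idx q=0$, and Theorem~3' rules out breaking, because indices along a broken continuation line must drop at every break except across the single time-dependent piece, which leaves no room when $\idx p=\idx q$. Your explicit bookkeeping $\idx p-\idx q\ge l-2$ makes that one-line argument precise, and your remark about passing from Cauchy sequences to arbitrary sequences addresses a point the paper glosses over.
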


\begin{corollary}
If $p$ is a critical point of $f_0$, $q$ is a critical point of $f_1$ such that $k=\idx p = \idx q+1$, then $\overline{\widetilde{\mathcal{M}}}^p_q$ is a compact $1$-manifold with boundary
\[
\left(\bigcup_{r\in \text{Crit}_{k-1}(f_0)} \left(
\mathcal{M}^p_r\times \widetilde{\mathcal{M}}^r_q
\right)\right)\bigcup\left(\bigcup_{r\in \text{Crit}_k(f_1)} \left(
\widetilde{\mathcal{M}}^p_r\times \mathcal{M}^r_q
\right)\right)
\]
where $\text{Crit}_k(f)$ is the set of critical points of $f$ with index $k$.  
\end{corollary}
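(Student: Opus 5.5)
Assuming Morse Theorems~1'--4', I will show that $\overline{\widetilde{\mathcal{M}}}^p_q$ is a compact $1$-manifold and identify its boundary, following the pattern of the analogous corollary for $\overline{\mathcal{M}^p_q}$ with $\idx p=\idx q+2$.

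First, by Morse Theorem~1', for generic $\Gamma$ the space $\widetilde{\mathcal{M}}^p_q$ is a manifold of dimension $\idx p-\idx q=1$. Define $\overline{\widetilde{\mathcal{M}}}^p_q$ as $\widetilde{\mathcal{M}}^p_q$ together with all broken flow lines that arise as limits in Morse Theorem~3'. This space is compact: a sequence with no convergent subsequence in $\widetilde{\mathcal{M}}^p_q$ has, by Morse Theorem~3', a subsequence converging to a broken flow line.

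Next I enumerate the possible broken configurations $p=p_1,\dots,p_k,p_{k+1},\dots,p_l=q$ by an index count. Each unbroken piece must lie in a nonempty moduli space.
\begin{itemize}
\item A piece of $\mathcal{M}(f_i)$ has dimension $\idx p_j-\idx p_{j+1}-1\ge 0$, so each such step drops the index by at least $1$.
\item The unique time-dependent piece in $\widetilde{\mathcal{M}}^{p_k}_{p_{k+1}}$ has dimension $\idx p_k-\idx p_{k+1}\ge 0$.
\end{itemize}
The total drop is $1$, and a genuine breaking has at least one autonomous piece. Hence there is exactly one autonomous piece, with drop $1$, and the time-dependent piece has drop $0$. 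This leaves two cases:
\begin{itemize}
\item the autonomous piece comes first, giving $(\gamma,\eta)\in\mathcal{M}^p_r\times\widetilde{\mathcal{M}}^r_q$ with $r\in\text{Crit}_{k-1}(f_0)$;
\item the autonomous piece comes last, giving $(\eta,\gamma)\in\widetilde{\mathcal{M}}^p_r\times\mathcal{M}^r_q$ with $r\in\text{Crit}_k(f_1)$.
\end{itemize}
All factors are $0$-dimensional and, by the previous corollary and the earlier index-difference-one corollary, compact, so both sets of added points are finite.

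Finally, Morse Theorem~4', stated in the form of Morse Theorem~4, gives for each such broken pair an embedding $\Phi\colon[0,\epsilon)\to\overline{\widetilde{\mathcal{M}}}^p_q$. Its restriction to $(0,\epsilon)$ is a diffeomorphism onto an open subset of $\widetilde{\mathcal{M}}^p_q$, and $\Phi(0)$ is the broken line. This provides boundary charts at the added points.

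The main obstacle is showing that these collar charts are compatible with the topology of convergence to broken lines. Concretely, every sequence in $\widetilde{\mathcal{M}}^p_q$ converging to a given broken line must eventually lie in the image of $\Phi$, and distinct broken lines must get disjoint collars. I would take this "surjectivity near the end" as part of what the gluing theorem asserts ("in a nice way"). With it, $\overline{\widetilde{\mathcal{M}}}^p_q$ is Hausdorff, locally a $1$-manifold with boundary, compact, and its boundary is exactly the stated union.
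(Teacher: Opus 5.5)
Your proposal is correct and follows essentially the route the paper intends; the paper only says the proof is ``very similar'' to the $\idx p=\idx q+2$ case. Morse Theorem~1' gives the $1$-manifold structure, Morse Theorem~3' together with your index count shows that the only added points are once-broken lines (one rigid autonomous piece and one rigid time-dependent piece), and Morse Theorem~4' supplies the boundary collars. Your explicit observation that the gluing map must also capture every sequence converging to a given broken line, and give disjoint collars, is precisely the ``in a nice way'' content the paper tacitly builds into its gluing theorem.
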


We can now define $\Phi\co C_n^0\to C_n^1$ on a generator $p\in C^0_n$ by 
\[
\Phi_\Gamma(p)=\sum_{q\in \text{Crit}_n(f_1)} |\widetilde{\mathcal{M}}^p_q| q.
\]
Where, again, $|\widetilde{\mathcal{M}}^p_q|$ means the signed count of points in $\widetilde{\mathcal{M}}^p_q$. Clearly, Morse Theorems~1' through~3' show $\Phi_\Gamma$ is a well-defined homomorphism. We now see that this is a chain map. 
\begin{theorem}\label{tm26}
With the notation above
\[
\Phi_\Gamma \circ \partial_0 + \partial_1\circ \Phi_\Gamma=0.
\]
\end{theorem}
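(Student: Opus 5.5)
The plan is to mimic the proof of Theorem~\ref{d2}: compute the coefficient of a fixed generator in $\Phi_\Gamma(\partial_0 p)+\partial_1(\Phi_\Gamma(p))$ and recognize it as the signed count of the boundary points of a compact oriented $1$-manifold.

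Fix $p\in \text{Crit}_k(f_0)$ and $q\in\text{Crit}_{k-1}(f_1)$. Expanding the definitions, I will first compute
\begin{align*}
\Phi_\Gamma(\partial_0 p)&=\sum_{q\in \text{Crit}_{k-1}(f_1)}\left(\sum_{r\in\text{Crit}_{k-1}(f_0)}|\mathcal{M}^p_r|\,|\widetilde{\mathcal{M}}^r_q|\right)q,\\
\partial_1(\Phi_\Gamma(p))&=\sum_{q\in \text{Crit}_{k-1}(f_1)}\left(\sum_{r\in\text{Crit}_{k}(f_1)}|\widetilde{\mathcal{M}}^p_r|\,|\mathcal{M}^r_q|\right)q.
\end{align*}
Here the first family of moduli spaces uses $(f_0,g_0)$ and the second uses $(f_1,g_1)$, and every space appearing is a compact oriented $0$-manifold by Morse Theorems~1 through~3 and~1' through~3' together with the first corollary above. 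The coefficient of $q$ in the sum is therefore the signed count of points of
\[
\left(\bigcup_{r\in \text{Crit}_{k-1}(f_0)} \mathcal{M}^p_r\times \widetilde{\mathcal{M}}^r_q\right)\cup\left(\bigcup_{r\in \text{Crit}_k(f_1)} \widetilde{\mathcal{M}}^p_r\times \mathcal{M}^r_q\right),
\]
where the product of two oriented $0$-manifolds carries the product sign.

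By the second corollary above, which rests on Morse Theorems~3' and~4', this set is exactly $\partial\overline{\widetilde{\mathcal{M}}}^p_q$, the boundary of a compact $1$-manifold. A compact oriented $1$-manifold is a disjoint union of circles and closed arcs, so the signed count of its boundary points is $0$. Hence the coefficient of $q$ vanishes for every $q$, and the identity holds on generators, and so everywhere by linearity.

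The only real obstacle is orientations. We must check that the consistent orientations of Morse Theorems~2 and~2' make the boundary orientation of $\overline{\widetilde{\mathcal{M}}}^p_q$ agree with the product orientation on both types of boundary component. A relative sign could appear between the two types: breaking at the $f_0$ end versus at the $f_1$ end, where one factor carries an $\R$-quotient and the other does not.

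If the conventions give the same sign on both types, the count shows that $\Phi_\Gamma\circ\partial_0+\partial_1\circ\Phi_\Gamma=0$, which is the stated form. If the signs come out opposite, one gets $\Phi_\Gamma\partial_0=\partial_1\Phi_\Gamma$ instead. The stated plus sign indicates the paper's conventions produce the first case. Either way $\Phi_\Gamma$ induces a map on homology, possibly after replacing $\Phi_\Gamma$ by $(-1)^k\Phi_\Gamma$ in degree $k$. Working over $\Z_2$ makes this issue disappear entirely.
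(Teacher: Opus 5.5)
Your proposal is correct and follows the paper's proof essentially line for line. You expand both composites, identify the coefficient of $q\in\text{Crit}_{k-1}(f_1)$ with the signed count of $\partial\overline{\widetilde{\mathcal{M}}^p_q}$ via the second corollary, and conclude it vanishes because this is a compact oriented $1$-manifold; you also correctly sum over $\text{Crit}_{k-1}(f_1)$, where the paper's display has the typo $f_0$. Your caveat about a possible relative sign between the two breaking types is not treated in the paper, which simply takes the ``consistent'' orientations of Morse Theorem~2' to mean the boundary signs match the product signs, and that assumption gives exactly the stated plus-sign identity.
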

From basic homological algebra, we have the immediate corollary.
\begin{corollary}
The map $\Phi_\Gamma$ induces a homomorphism
\[
H(C^0,\partial^0)\to H(C^1,\partial^1),
\]
where $H(C^i,\partial^i)$ is the homology of $(C^i,\partial^i)$.
\end{corollary}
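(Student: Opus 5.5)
The corollary is a formal consequence of Theorem~\ref{tm26}, so the plan is the standard homological-algebra argument. Write $\Phi=\Phi_\Gamma$ and let $\Phi_*\co H(C^0,\partial^0)\to H(C^1,\partial^1)$ be defined on a class $[a]$, with $a\in C^0_n$ and $\partial^0 a=0$, by $\Phi_*[a]=[\Phi(a)]$. Two things need checking: that $\Phi$ takes cycles to cycles, and that it takes boundaries to boundaries.

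For cycles, suppose $\partial^0 a=0$. Theorem~\ref{tm26} gives
\[
\partial^1\Phi(a)=-\Phi(\partial^0 a)=-\Phi(0)=0,
\]
so $\Phi(a)$ is a cycle. For boundaries, suppose $a=\partial^0 b$ with $b\in C^0_{n+1}$. Then Theorem~\ref{tm26} gives
\[
\Phi(a)=\Phi(\partial^0 b)=-\partial^1\Phi(b)=\partial^1\bigl(-\Phi(b)\bigr),
\]
so $\Phi(a)$ is a boundary. Hence $\Phi$ restricts to a map $\ker\partial^0\to\ker\partial^1$ that carries $\img\partial^0$ into $\img\partial^1$. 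It therefore descends to a map on quotients, and this map is a homomorphism because $\Phi$ is. The map also preserves the grading, since $\Phi_\Gamma$ sends $C^0_n$ to $C^1_n$.

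The only point requiring any care is the sign. Theorem~\ref{tm26} gives anticommutation rather than commutation, and the argument above goes through unchanged with that sign. Alternatively, one can replace $\Phi$ by $(-1)^n\Phi$ on $C^0_n$ to get an honest chain map, which induces the same map on homology up to a sign in each degree. I do not expect any step to be a real obstacle. All the geometric content (Morse Theorems~1'--4' and the resulting boundary description of $\overline{\widetilde{\mathcal{M}}}^p_q$) has already been used in Theorem~\ref{tm26}.
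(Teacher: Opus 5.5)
Your argument is correct and is exactly the deduction the paper intends. The paper offers no proof beyond saying the corollary follows from basic homological algebra applied to the relation $\Phi_\Gamma\circ\partial_0+\partial_1\circ\Phi_\Gamma=0$ of Theorem~\ref{tm26}, and your check that cycles go to cycles and boundaries go to boundaries, with the sign handled correctly, is that argument written out.
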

\begin{proof}[Proof of Theorem~\ref{tm26}]
We will denote by $\text{Crit}_k(f_i)$ the critical points of $f_i$ with index $k$. Now take a critical point $p$ of $f_0$ with index $n$. Then we have
\begin{align*}
(\Phi_\Gamma\circ\partial_0)(p)&+(\partial_1\circ\Phi_\Gamma)(p)
=\Phi_\Gamma\left(\sum_{r\in \text{Crit}_{n-1}(f_0)}|\mathcal{M}^p_r|r\right)
	+ \partial_1\left(\sum_{r\in\text{Crit}_n(f_1)}|\widetilde{\mathcal{M}}^p_r|r\right)\\
	&=\sum_{r\in\text{Crit}_{n-1}(f_0)}\left( |\mathcal{M}^p_r| \sum_{q\in\text{Crit}_{n-1}(f_1)} |\widetilde{\mathcal{M}}^r_q|q\right)
		+\sum_{r\in\text{Crit}_n(f_1)}\left(|\widetilde{\mathcal{M}}^p_r|\sum_{q\in\text{Crit}_{n-1}(f_1)}|\mathcal{M}^r_q|q\right)\\
	&=\sum_{q\in \text{Crit}_{n-1}(f_0)} \left(
	\sum_{r\in\text{Crit}_{n-1}(f_0)} |\mathcal{M}^p_r||\widetilde{\mathcal{M}}^r_q| +
		\sum_{r\in\text{Crit}_n(f_1)}|\widetilde{\mathcal{M}}^p_r||\mathcal{M}^r_q| \right) q\\
	&=\sum_{q\in \text{Crit}_{n-1}(f_0)}|\partial \overline{\widetilde{\mathcal{M}}^p_q}| q= 0,
	\end{align*}
since $\overline{\widetilde{\mathcal{M}}^p_q}$ is a compact $1$-manifold with boundary. 
\end{proof}
In Figure~\ref{fig:cm} we see the ``picture proof" of this theorem. Here $p\in\text{Crit}_n(f_0), q\in\text{Crit}_{n-1}(f_1)$ and $r$ and $r'$ are either in $\text{Crit}_n(f_1)$ or $\text{Crit}_{n-1}(f_0)$.
\begin{figure}[htb]{
\begin{overpic}
{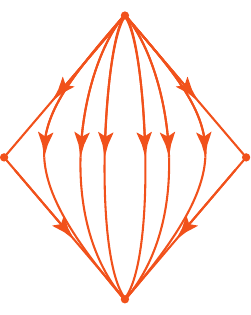}
\put(58, 151){$p$}
\put(-9, 75){$r$}
\put(58, -3){$q$}
\put(125, 75){$r'$}
\end{overpic}}
\caption{A component of $\overline{\widetilde{\mathcal{M}}^p_q}$.}
\label{fig:cm}
\end{figure} 
So we see the terms showing up in $(\Phi_\Gamma \circ \partial_0 + \partial_1\circ \Phi_\Gamma)(p)$ are simply the boundary of $\overline{\widetilde{\mathcal{M}}^p_q}$.

We would now like to see that $\Phi_\Gamma$ induces an isomorphism. To achieve this we consider two paths $\Gamma_0$ and $\Gamma_1$ from $(f_0,g_0)$ to $(f_1,g_1)$ and let $D$ be a path from $\Gamma_0$ to $\Gamma_1$. That is,
\[
D=\{(f_{s,t}, g_{s,t}) \text{ for } (s,t)\in [0,1]\times \R \text{ and } \Gamma_0=(f_{0,t},g_{0,t}), \Gamma_1=(f_{1,t}, g_{1,t})\}.
\]
Now if $p\in\text{Crit}(f_0)$ and $q\in\text{Crit}(f_1)$ then we set 
\[
\widehat{\mathcal{M}}^p_q=\{(\gamma,s): \gamma\in\widetilde{\mathcal{M}}^p_q(s)\},
\]
where $\widetilde{\mathcal{M}}^p_q(s)$ is $\widetilde{\mathcal{M}}^p_q$ for the path $\Gamma_s=(f_{s,t},g_{s,t})$. We have the ``same" Morse Theorems~1 through~4 for $\widehat{\mathcal{M}}^p_q$ except that 
\[
\dim \widehat{\mathcal{M}}^p_q= \idx p-\idx q+1
\]
and the ``obvious'' changes to the compactness and gluing theorems.
\bhw
Figure out what the ``obvious" changes are. 
\ehw
Now we can define $K\co C_n^0\to C_{n+1}^1$ by
\[
K(p)=\sum_{q\in\text{Crit}_{n+1}(f_1)} |\widehat{\mathcal{M}}^p_q| q.
\]
This gives us a chain homotopy equivalence between $\Phi_{\Gamma_0}$ and $\Phi_{\gamma_1}$.
\begin{theorem}
With the notation above
\[
\partial_1\circ K + K\circ \partial_0 + \Phi_{\Gamma_0}+\Phi_{\Gamma_1}=0.
\]
\end{theorem}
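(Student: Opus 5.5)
The plan is to mimic the proofs of Theorem~\ref{d2} and Theorem~\ref{tm26}: interpret each coefficient of the left-hand side as a signed count of boundary points of a compact oriented $1$-manifold. Fix $p\in\text{Crit}_n(f_0)$ and $q\in\text{Crit}_n(f_1)$. By the parametrized version of Morse Theorem~1', for a generic homotopy $D$, $\widehat{\mathcal{M}}^p_q$ is a manifold of dimension $\idx p-\idx q+1=1$. The whole proof then reduces to identifying the boundary of its compactification $\overline{\widehat{\mathcal{M}}^p_q}$.

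First I would work out the ``obvious'' changes to compactness and gluing in the one-parameter family. A sequence $(\gamma_i,s_i)$ in $\widehat{\mathcal{M}}^p_q$ has a subsequence with $s_i\to s_\infty\in[0,1]$. Either $s_\infty\in\{0,1\}$ and $\gamma_i$ converges to an honest element of $\widetilde{\mathcal{M}}^p_q(0)$ or $\widetilde{\mathcal{M}}^p_q(1)$ (these are $0$-dimensional, as in Morse Theorem~1'). Or $\gamma_i$ breaks. Breaking is governed by dimension counts: the total dimension must add up to $1$, with the time-dependent piece lying in $\widehat{\mathcal{M}}$ at parameter $s_\infty$. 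Since $\widetilde{\mathcal{M}}^{r}_{r'}(s)$ has negative expected dimension when $\idx r<\idx r'$, but in the family such a flow line can occur at isolated values of $s$, the codimension-one breakings are
\[
\widehat{\mathcal{M}}^p_r\times\mathcal{M}^r_q \quad (r\in\text{Crit}_{n+1}(f_1)) \qquad\text{and}\qquad \mathcal{M}^p_r\times\widehat{\mathcal{M}}^r_q \quad (r\in\text{Crit}_{n-1}(f_0)),
\]
where the $\widehat{\mathcal{M}}$ factors here are $0$-dimensional. All other configurations have too little dimension and do not occur for generic $D$. The parametrized gluing theorem shows that each such broken configuration is the endpoint of exactly one end of $\widehat{\mathcal{M}}^p_q$, with consistent orientations. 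Hence
\[
\partial\overline{\widehat{\mathcal{M}}^p_q}=\widetilde{\mathcal{M}}^p_q(0)\cup\widetilde{\mathcal{M}}^p_q(1)\cup\bigcup_{r\in\text{Crit}_{n+1}(f_1)}\widehat{\mathcal{M}}^p_r\times\mathcal{M}^r_q\cup\bigcup_{r\in\text{Crit}_{n-1}(f_0)}\mathcal{M}^p_r\times\widehat{\mathcal{M}}^r_q .
\]

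Next I would expand the left-hand side of the theorem on $p$, exactly as in the proof of Theorem~\ref{tm26}. The coefficient of $q$ is
\[
\sum_{r\in\text{Crit}_{n+1}(f_1)}|\widehat{\mathcal{M}}^p_r||\mathcal{M}^r_q|+\sum_{r\in\text{Crit}_{n-1}(f_0)}|\mathcal{M}^p_r||\widehat{\mathcal{M}}^r_q|+|\widetilde{\mathcal{M}}^p_q(0)|+|\widetilde{\mathcal{M}}^p_q(1)|.
\]
This is precisely the signed count of $\partial\overline{\widehat{\mathcal{M}}^p_q}$, which vanishes because it is the boundary of a compact oriented $1$-manifold.

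The main obstacle is the compactness/gluing analysis in the family. Two points need care. First, one must check that the boundary orientations produce exactly the signs in the stated formula, with $+\Phi_{\Gamma_0}+\Phi_{\Gamma_1}$ rather than a difference; the $s=0$ and $s=1$ ends carry opposite induced orientations from $[0,1]$. I would fix conventions so that this matches, or else simply work over $\Z_2$ first, where signs are irrelevant. Second, one must rule out degenerate breakings, such as two breakings at once or a constant piece, by genericity of $D$.
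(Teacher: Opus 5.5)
Your argument is the one the paper intends: its proof is left as an exercise whose hint is exactly that the coefficient of $q$ in the left-hand side applied to $p$ is the count of $\partial\overline{\widehat{\mathcal{M}}^p_q}$, and your index bookkeeping and your list of the four boundary strata are correct. On signs, your remark that the ends at $s=0$ and $s=1$ are oppositely oriented is decisive: no convention can produce $+\Phi_{\Gamma_0}+\Phi_{\Gamma_1}$ over $\Z$ (for a constant homotopy $K=0$, and the identity would force $2\Phi_\Gamma=0$), so drop the ``fix conventions'' option and read the statement either over $\Z/2\Z$ or over $\Z$ with $\Phi_{\Gamma_1}-\Phi_{\Gamma_0}$ in place of $\Phi_{\Gamma_0}+\Phi_{\Gamma_1}$.
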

\bhw
Using the previous exercise, prove this theorem. \\ Hint: Just as in the previous theorems, notice that the count of points in $\partial  \overline{\widehat{\mathcal{M}}^p_q}$ is exactly the coefficient of $q$ in the application of the left-hand side of the formula in the theorem to $p$. 
\ehw
Of course, standard homological algebra tells us that the previous theorem shows $\Phi_{\Gamma_0}$ and $\Phi_{\Gamma_1}$ induce the same map on homology (or the way it is stated, one map is minus the other).
\begin{corollary}
With the notation above the maps $\Phi_{\Gamma_0}$ and $-\Phi_{\Gamma_1}$ induce the same maps on homology. 
\end{corollary}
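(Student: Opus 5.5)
The corollary follows from the preceding theorem by pure homological algebra. The plan is to apply the identity
\[
\partial_1\circ K + K\circ \partial_0 + \Phi_{\Gamma_0}+\Phi_{\Gamma_1}=0
\]
to cycles, and to use the earlier corollary that each $\Phi_{\Gamma_i}$ is a chain map and so descends to homology.

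First we check that both maps induce well-defined maps on homology. By Theorem~\ref{tm26}, $\Phi_{\Gamma_i}\circ\partial_0=-\partial_1\circ\Phi_{\Gamma_i}$. So $\Phi_{\Gamma_i}$ sends cycles to cycles, and it sends boundaries $\partial_0 y$ to boundaries $-\partial_1\Phi_{\Gamma_i}(y)$. The same is then true of $-\Phi_{\Gamma_1}$, and $[x]\mapsto[\pm\Phi_{\Gamma_i}(x)]$ is well defined.

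Next, take a cycle $x\in C^0_n$, so $\partial_0 x=0$. Applying the identity to $x$ gives
\[
\Phi_{\Gamma_0}(x)-\bigl(-\Phi_{\Gamma_1}(x)\bigr)=-\partial_1(K(x))-K(\partial_0 x)=\partial_1\bigl(-K(x)\bigr).
\]
Here $K(x)\in C^1_{n+1}$, so the difference of the two images is a boundary in $C^1$. Hence $[\Phi_{\Gamma_0}(x)]=[-\Phi_{\Gamma_1}(x)]$ in $H_n(C^1,\partial^1)$. Since $x$ was an arbitrary cycle, the induced maps agree.

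There is no real obstacle here; the only point needing care is bookkeeping of gradings. We must confirm that $K$ raises degree by one, so that $\partial_1\circ K$ and $K\circ\partial_0$ both land in $C^1_n$ and the identity is an equation of maps $C^0_n\to C^1_n$. This holds by the definition of $K$, using the dimension formula $\dim\widehat{\mathcal{M}}^p_q=\idx p-\idx q+1$.
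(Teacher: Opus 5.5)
Your argument is correct and is exactly the standard homological algebra the paper appeals to. On a cycle $x$ the term $K(\partial_0 x)$ drops out, so $\Phi_{\Gamma_0}(x)-\bigl(-\Phi_{\Gamma_1}(x)\bigr)=\partial_1(-K(x))$ is a boundary, and Theorem~\ref{tm26} makes the induced maps well defined.

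One caveat concerns the paper, not your deduction: its signs are only right mod $2$. If you take $\Gamma_0=\Gamma_1$ to be a regular perturbation of the constant path, whose induced map the paper says is the identity, the statement would force $\mathrm{id}=-\mathrm{id}$ on $H_*(M;\Z)$. With consistent orientation conventions, the same cycle argument instead shows that $\Phi_{\Gamma_0}$ and $\Phi_{\Gamma_1}$ induce the same map.
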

To complete our proof that the Morse homology is independent of the choices of Morse function and Riemannian metric, we have the following exercises. 
\bhw
Let $\Gamma$ be the constant path from $(f,g)$ to $(f,g)$. Show that $\Phi_\Gamma$ induces the identity map on homology. \\
Hint: Of course, one must perturb $\Gamma$ so that it is ``generic" and Theorems~$1'$ through~$4'$ hold, but this perturbation can be chosen to be arbitrarily small. 
\ehw
\bhw
If $\Gamma_1$ is a path from $(f_0,g_0)$ to $(f_1,g_1)$ and $\Gamma_2$ is a path from $(f_1,g_1)$ to $(f_2,g_2)$, then we can define the concatenated path $\Gamma$ that agrees with $\Gamma_1$ for $t\leq 1$ and agrees with $\Gamma_2$, shifted by $2$, for $t\geq 1$. Show that $\Phi_{\Gamma_2}\circ \Phi_{\Gamma_1}=\Phi_{\Gamma}$ on homology. 
\ehw
\begin{theorem}
The Morse homology of $M$ is independent of the Morse function $f$ and Riemannian metric $g$.
\end{theorem}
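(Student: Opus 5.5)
Given two generic pairs $(f_0,g_0)$ and $(f_1,g_1)$, choose generic paths $\Gamma$ from $(f_0,g_0)$ to $(f_1,g_1)$ and $\Gamma'$ from $(f_1,g_1)$ back to $(f_0,g_0)$. Such paths exist because the spaces of functions and metrics are contractible, and we perturb them so that Morse Theorems~1'--4' hold. Theorem~\ref{tm26} and its corollary give chain maps $\Phi_\Gamma\colon C^0\to C^1$ and $\Phi_{\Gamma'}\colon C^1\to C^0$ that induce homomorphisms on homology. The goal is to show that $\Phi_{\Gamma'}\circ\Phi_\Gamma$ and $\Phi_\Gamma\circ\Phi_{\Gamma'}$ induce isomorphisms; in fact they will induce $\pm$ the identity. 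It then follows that $\Phi_\Gamma$ is an isomorphism $H(C^0,\partial^0)\cong H(C^1,\partial^1)$.

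By the concatenation exercise, $\Phi_{\Gamma'}\circ\Phi_\Gamma$ agrees on homology with $\Phi_{\Gamma''}$, where $\Gamma''$ is the concatenated path from $(f_0,g_0)$ to itself. Let $\Gamma_c$ be the constant path at $(f_0,g_0)$, slightly perturbed to be generic. The space of paths with fixed endpoints is again contractible, so there is a homotopy $D$ from $\Gamma''$ to $\Gamma_c$. Choose $D$ generic so that the one-parameter versions of Theorems~1--4 hold for $\widehat{\mathcal{M}}^p_q$. The chain homotopy theorem then shows that $\Phi_{\Gamma''}$ and $-\Phi_{\Gamma_c}$ induce the same map on homology. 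By the exercise on constant paths, $\Phi_{\Gamma_c}$ induces the identity. Hence $\Phi_{\Gamma'}\circ\Phi_\Gamma$ induces $\pm\mathrm{id}$, which is an isomorphism. Reversing the roles of the two pairs gives the same conclusion for $\Phi_\Gamma\circ\Phi_{\Gamma'}$ on $H(C^1,\partial^1)$. Therefore $\Phi_\Gamma$ is injective and surjective on homology.

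It is worth checking that the sign does no harm. Whatever sign convention appears in $\partial_1\circ K+K\circ\partial_0+\Phi_{\Gamma_0}+\Phi_{\Gamma_1}=0$, the resulting composite is $\pm\mathrm{id}$, and that is still invertible. One could also fix the signs so that $\Phi$ commutes with $\partial$, but this is not needed. Since the pairs were arbitrary, the Morse homology $H_i(M)$ is independent of $(f,g)$ up to isomorphism. Independence of the choice of $\Gamma$ comes from the chain homotopy theorem: different paths give the same map up to sign.

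The main technical obstacle lies in the inputs rather than in the algebra above. First, the concatenation exercise requires a gluing argument for flow lines broken at a critical point of the middle function $f_1$. One stretches the concatenation parameter, and $\widetilde{\mathcal M}$ for the long path is identified with the fibre product $\widetilde{\mathcal M}\times\widetilde{\mathcal M}$. Second, the constant-path exercise requires that for a small perturbation of the constant path, the only index-$0$ flow lines are near-constant at critical points. The first ingredient is where I expect the real work to be. I would handle it by taking the concatenation with the two halves separated by a time shift $T\to\infty$ and applying Morse Theorems~3' and~4' to identify the count as $T\to\infty$. Invariance of the count in $T$ is itself a special case of the chain homotopy theorem.
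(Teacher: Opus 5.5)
Your proposal is correct and is the argument the paper intends. The paper leaves this theorem as an exercise built from Theorem~\ref{tm26}, the chain-homotopy corollary, and the constant-path and concatenation exercises, and you do exactly that: identify $\Phi_{\Gamma'}\circ\Phi_\Gamma$ (and $\Phi_\Gamma\circ\Phi_{\Gamma'}$) with $\Phi$ of a loop, homotope the loop to the constant path, and conclude that both composites are $\pm\mathrm{id}$ on homology. Leaving the sign as $\pm$ is also right, since the paper's stated signs (e.g.\ $\Phi_{\Gamma_0}\simeq-\Phi_{\Gamma_1}$ together with $\Phi_{\text{const}}=\mathrm{id}$) are only consistent mod $2$, and either sign gives an isomorphism.
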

\bhw
Use the above exercises and theorems to prove this. 
\ehw

\subsection{A few details of the main theorems}\label{details}
In this section, we will give an indication of how to prove Morse Theorems~$1$ through~$4$ (and hence $1'$ through $4'$ too). We will not give complete proofs as they are quite intricate and involve a lot of analysis that can obscure the main ideas on a first pass at the subject. One may find more details in \cite{FrauenfelderNicholls2020Pre, HutchingsNotes, Schwarz1993}; in particular, the presenation here was partially inspired by the second listed reference. We also note that one can define Morse homology with chain groups being $\Z/2\Z$ vector spaces generated by critical points, and in this case, one does not need Morse Theorem~$2$ about the orientability of the spaces $\mathcal{M}^p_q$. In many situations, it is common for people to work over $\Z/2\Z$ and ignore this theorem. We will do that here and only discuss the other theorems. 

We will not describe the ideas of the proof of the main theorems for the other homology theories, but they mostly follow the same general idea (though details are quite different), except for the proof of the compactness result, which are quite different. 

\subsubsection{Morse Theorem~$1$}
Given a Morse function $f\co M\to \R$ and a Riemannian metric $g$ on $M$ we define the \dfn{path space} between two points $p$ and $q$ to be
\[
\mathcal{P}^p_q=\{ \gamma\co \R\to M : \lim_{t\to -\infty}\gamma(t)=p \text{ and } \lim_{t\to\infty}\gamma(t)=q\}.
\]
We will need to know what the tangent space of $\mathcal{P}^p_q$ is. (At this point, we are ignoring issues of smoothness. That is, we are not specifying if $\gamma$ is smooth or $C^k$ or something else. We will discuss this later.) To this end, recall that one may think of a tangent vector at a point as the equivalence class of paths through that point that have the same first derivative at that point. So if $\gamma\in\mathcal{P}^p_q$, then we can consider a path $s\to \gamma_s$ for $s\in(-\epsilon, \epsilon)$ with $\gamma_0=\gamma$. Notice that $\gamma_s(t_0)$ for a fixed $t_0$ is a path in $M$ such that $\gamma_0(t_0)$ is $\gamma(t_0)$. Thus 
\[
v(t_0)=\frac{\partial \gamma_s(t_0)}{\partial s}\Big|_{s=0}
\]
is a vector in $T_{\gamma(t_0)}M$. See Figure~\ref{fig:tangent}. 
\begin{figure}[htb]{
\begin{overpic}
{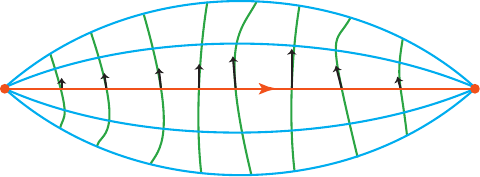}
\put(-9, 40){$p$}
\put(235, 40){$q$}
\put(125, 48){$\gamma$}
\end{overpic}}
\caption{The blue is a path $\gamma_s$ in $\mathcal{P}^p_q$ through $\gamma$ (shown in orange). The green curves are the paths $s\mapsto \gamma_s(t_0)$ for some fixed $t_0\in\R$ and the black vectors give the vector field $v$ along $\gamma$.}
\label{fig:tangent}
\end{figure} 
So $v(t)$ for $t\in\R$ is a vector field along $\gamma$, by this we mean that for each $\gamma(t)$ the vector $v(t)\in T_{\gamma(t)}M$. Summing up, given a path in $\mathcal{P}^p_q$ through $\gamma$ its ``derivative" at $\gamma(t)$ is a vector field $v(t)$ along $\gamma$. We also know that since $\gamma_s\in \mathcal{P}^p_q$ we must have that $v(t)\to 0$ as $t\to \pm\infty$. 

We can easily see the converse too. Given any vector field $v(t)$ along $\gamma$ that decays to $0$ as $t$ approaches $\pm\infty$, extend $v$ to a vector field $V$ on all of $M$. Let $\Gamma\co M\times\R\to M$ be the flow of $V$. We can now define
\[
\gamma_s(t)=\Gamma(\gamma(t),s).
\]
This gives a path in $\mathcal{P}^p_q$ through $\gamma$ and by the properties of the flow of a vector field, we see that the vector field associated to $\gamma_s$ along $\gamma$ is simply $v$. Thus, vectors tangent to $\mathcal{P}^p_q$ at $\gamma$ are in one-to-one correspondence with vector fields along $\gamma$ that decay to $0$ as $t$ approaches $\pm\infty$. So we see
\begin{align*}
T_\gamma(\mathcal{P}^p_q)&=\{\text{vector fields along $\gamma$ that decay to $0$ as $t$ approaches $\pm\infty$}\}\\
&=\Gamma_d(\gamma^*TM)
\end{align*}
where $\gamma^*TM$ is the pull-back of $TM$ to $\R$ by $\gamma$ and $\Gamma_d$ denotes sections of this bundle that decay as indicated. 

We now suppose that $p$ and $q$ are critical points of $f$ and consider the map
\[
S\co \mathcal{P}^p_q\to T\mathcal{P}^p_q\co \gamma\mapsto \gamma'(t)+ \nabla f_{\gamma(t)},
\]
where $\nabla f$ is the gradient of $f$ with respect to $g$. Clearly $\gamma'(t)+ \nabla f_{\gamma(t)}$ is an element of $\Gamma_d(\gamma^*TM)$ as desired and also that the space of {\em parameterized} flow lines, which we denote by $\underline{\mathcal{M}}^p_q$, is 
\[
\underline{\mathcal{M}}^p_q=S^{-1}(Z)
\]
where $Z$ is the zero section of $T\mathcal{P}^p_q$. We note that $\R$ acts on $\underline{\mathcal{M}}^p_q$ freely by reparameterization and thus if $\underline{\mathcal{M}}^p_q$ is a manifold then so is ${\mathcal{M}}^p_q$.
So we have managed to describe the space we are interested in, that is $\mathcal{M}^p_q$, as the preimage of a submanifold $Z$ under some function $S$ (after modding out by reparameterization). Thus, from standard differential topology, if $S$ were ``transverse" to $Z$ then we would expect $\underline{\mathcal{M}}^p_q$ to be a manifold. Of course, the problem here is that $\mathcal{P}^p_q$ and $T\mathcal{P}^p_q$ are infinite-dimensional, and a standard differential topology course does not deal with these. So we will take a brief digression to address this and then return to a sketch of the proof that $\mathcal{M}^p_q$ is a manifold. 
\subsubsection{Recollections on transversality and infinite dimensional manifolds}
We begin by recalling the ``standard way" to create functions transverse to submanifolds in finite dimensions. Given a smooth function $f\co X\to Y$ between manifolds and a submanifold $Z$ of $Y$ we would like to ``make" $f$ transverse to $Z$, by which we mean we would like to find arbitrarily small perturbations of $f$ so that it is transverse to $Z$ (if it is not already transverse). The idea is to try to choose a perturbation space $P$ and extend $f$ to a map $F\co X\times P\to Y$. Since $P$ adds dimensions to the domain, $F$ has a better chance of being transverse to $Z$ (and of course, for the right space $P$, it will be). We then use the following standard theorem from differential topology. 
\begin{theorem}\label{basictransversality}
With the notation in the previous paragraph, assume that $F$ is transverse to $Z$ and let $\pi\co X\times P\to P$ be the projection map. If $p$ is a regular value of $\pi|_{F^{-1}(Z)}\co F^{-1}(Z)\to P$, then $f_p(\cdot)=F(\cdot, p)\co X\to Y$ is transverse to $Z$. 
\end{theorem}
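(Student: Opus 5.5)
Fix $x\in X$ with $f_p(x)=F(x,p)=y\in Z$. We must show $df_p(T_xX)+T_yZ=T_yY$. The whole argument is linear algebra at the single point $(x,p)$, using two inputs. First, since $F$ is transverse to $Z$, the set $W=F^{-1}(Z)$ is a submanifold of $X\times P$ with
\[
T_{(x,p)}W=dF_{(x,p)}^{-1}(T_yZ),
\]
and the codimension of $W$ equals that of $Z$. Second, since $p$ is a regular value of $\pi|_W$, the projection $d\pi\co T_{(x,p)}W\to T_pP$ is surjective.

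Take an arbitrary $a\in T_yY$. By transversality of $F$ there are $(u,e)\in T_xX\times T_pP$ and $z\in T_yZ$ with
\[
a=dF_{(x,p)}(u,e)+z.
\]
Because $d\pi$ is surjective on $T_{(x,p)}W$, choose $(w,e)\in T_{(x,p)}W$ with the same second component $e$. Then $(u-w,0)$ lies in $T_xX\times\{0\}$, and
\[
dF_{(x,p)}(u-w,0)=df_p(u-w).
\]
Moreover $dF_{(x,p)}(w,e)\in T_yZ$ by the description of $T_{(x,p)}W$ above. Hence
\[
a=df_p(u-w)+\bigl(dF_{(x,p)}(w,e)+z\bigr)\in df_p(T_xX)+T_yZ,
\]
and $f_p$ is transverse to $Z$ at $x$. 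Since $x$ was an arbitrary point of $f_p^{-1}(Z)$, this proves the theorem.

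The only step requiring care is the identification $T_{(x,p)}W=dF^{-1}(T_yZ)$. This is the standard preimage theorem: locally write $Z$ as the zero set of a submersion $\phi$ near $y$. Then $W$ is locally the zero set of $\phi\circ F$, which is a submersion precisely because $F$ is transverse to $Z$, and its tangent space is $\ker d(\phi\circ F)=dF^{-1}(\ker d\phi)=dF^{-1}(T_yZ)$. Everything else is a direct diagram chase.

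In the infinite-dimensional setting used later, the same chase works verbatim, provided the preimage theorem holds there. That requires Banach manifolds and splitting of kernels, which is where the Fredholm hypotheses will enter.
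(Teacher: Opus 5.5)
Your argument is correct and complete. The paper gives no proof of this statement; it leaves it as the exercise that immediately follows. Your pointwise chase at $(x,p)$, together with the preimage-theorem identification $T_{(x,p)}F^{-1}(Z)=dF_{(x,p)}^{-1}(T_yZ)$, is the standard proof of the parametric transversality theorem (as in Guillemin--Pollack) and is exactly what the exercise asks for.

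Your remark that the codimension of $W$ equals that of $Z$ is true but never used. Your closing comment about the Banach setting is accurate: the chase only needs $F^{-1}(Z)$ to be a split submanifold with that tangent space and $d\pi$ to be onto its tangent spaces.
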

\bhw
Prove this theorem for $X, Y,$ and $P$ of arbitrary dimension. 
\ehw
In finite dimensions have we have Sard's Theorem, which tells us when there is a dense set of regular values $p$.
\begin{theorem}[Sard's Theorem]
If $h\co M\to N$ is a $C^k$-map then the set of regular values of $h$ is dense in $N$ if $k>\max\{0,m-n\}$. 
\end{theorem}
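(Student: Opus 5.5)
We must prove Sard's Theorem: for a $C^k$ map $h\co M\to N$ with $m=\dim M$, $n=\dim N$ and $k>\max\{0,m-n\}$, the regular values are dense. The plan is to prove the stronger statement that the set of critical values $h(C)$, where $C=\{x: \operatorname{rank} dh_x<n\}$, has measure zero in $N$. A measure-zero set has dense complement.

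\textbf{Reduction to Euclidean space.} Cover $M$ by countably many charts and $N$ by charts. Measure zero is preserved by diffeomorphisms and by countable unions. So it suffices to treat $h\co U\to\R^n$ with $U\subset\R^m$ open, and to show that $h(C\cap K)$ has measure zero for each compact cube $K\subset U$.

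\textbf{The case $m<n$.} Here every point is critical. Extend $h$ to $H(x,y)=h(x)$ on $U\times\R^{n-m}$. This map is $C^1$, and $h(U)=H(U\times\{0\})$ is the image of a measure-zero set under a $C^1$ map between spaces of equal dimension. Such images have measure zero, since $C^1$ maps are locally Lipschitz.

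\textbf{The case $m\ge n$ (Milnor's argument).} Filter the critical set by $C\supset C_1\supset C_2\supset\cdots\supset C_{k}$, where $C_i$ is the set of points at which all partial derivatives of $h$ of order $\le i$ vanish. There are three steps.
\begin{enumerate}
\item[(a)] $h(C_{k})$ has measure zero. Subdivide a cube of side $\delta$ into $N^m$ subcubes of side $\delta/N$. By Taylor's theorem, each subcube meeting $C_k$ maps into a set of diameter $O(N^{-(k+1)})$ in the $C^{k+1}$ case. (In the $C^k$ case one uses $o(N^{-k})$, via uniform continuity of the $k$-th derivatives.) The total image volume is therefore at most $N^m\cdot o(N^{-kn})$. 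This tends to $0$ when $kn\ge m$, which holds because $k\ge m-n+1$ and $n\ge1$.
\item[(b)] $h(C_i\setminus C_{i+1})$ has measure zero. Near such a point some $i$-th order partial has a nonvanishing derivative. Thus $C_i$ locally lies in a hypersurface $V=w^{-1}(0)$ with $w$ a partial derivative of order $i$. Apply induction on $m$ to $h|_V$, noting that $C_i\cap V$ is contained in the critical set of $h|_V$.
\item[(c)] $h(C\setminus C_1)$ has measure zero. Near a point where some $\partial h_1/\partial x_1\neq0$, change coordinates so that $h(x)=(x_1,\tilde h(x))$. Then a point is critical for $h$ iff it is critical for the slice map $\tilde h_t=\tilde h(t,\cdot)$. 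Induction on $m$ shows that each slice $h(C)\cap\{t\}\times\R^{n-1}$ has measure zero, and Fubini finishes the step.
\end{enumerate}
The case $m=0$, or $n=0$, is the trivial base of the induction.

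\textbf{Main obstacle.} The main obstacle is the differentiability bookkeeping. In step (b) the hypersurface $V$ is only $C^{k-i}$, and in step (c) the coordinate change costs a derivative. I would first try to handle this by proving the theorem under the stronger hypothesis that $h$ is $C^\infty$ (or at least $C^{m}$), which suffices for the applications in this paper. Afterwards I would check the sharp threshold $k>\max\{0,m-n\}$ using the little-$o$ Taylor estimate, following Federer's refinement, or else simply cite it.
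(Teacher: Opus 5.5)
The paper does not prove this statement; it quotes it as standard background. So your proposal has to be judged on its own, and several parts are sound at the stated regularity:
\begin{itemize}
\item the chart reduction;
\item the case $m<n$;
\item the little-$o$ version of step (a), since $kn\ge(m-n+1)n=m+(m-n)(n-1)\ge m$;
\item step (c), which in fact loses no derivatives, contrary to your worry. The map $\phi(x)=(h_1(x),x_2,\dots,x_m)$ is a $C^k$ local diffeomorphism with $C^k$ inverse. So the slices of $h\circ\phi^{-1}$ are $C^k$ maps $\R^{m-1}\to\R^{n-1}$, whose threshold is again $\max\{0,m-n\}$.
\end{itemize}

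The genuine gap is step (b), and the repair you suggest does not touch it. At a point of $C_i\setminus C_{i+1}$, the function $w$ is only $C^{k-i}$. Hence $V$, with a parametrization $\psi$ of class $C^{k-i}$, and $h\circ\psi$ are only $C^{k-i}$. The inductive hypothesis for an $(m-1)$-dimensional domain (when $m>n$) requires regularity at least $m-n$. At the sharp value $k=m-n+1$ this is available only for $i=1$.

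Concretely, take $h\co\R^4\to\R$ of class $C^4$; this is the sharp case and also your fallback ``$C^m$''. At $i=2$, $h\circ\psi$ is $C^2$ on a $3$-dimensional domain, whereas the case $(3,1)$ needs $C^3$. A direct estimate fails too: for $y\in\psi^{-1}(C_2)$ one only has $|h(\psi(y'))-h(\psi(y))|=O(|y'-y|^3)$. That covers the image by $N^3$ intervals of length $O(N^{-3})$, so the total length does not tend to zero. The little-$o$ Taylor bound improves step (a), which is already fine; it cannot restore derivatives lost in passing to $V$.

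The missing ingredient is the Kneser--Glaeser rough composition theorem, which is how the $C^k$ case is handled in Abraham--Robbin and in Hirsch. Since the derivatives of $h$ of orders $1,\dots,i$ vanish on $C_i$, the composite $h\circ\psi$ agrees on $\psi^{-1}(C_i)$ with a $C^k$ map whose derivatives of orders $1,\dots,i$ vanish there. The induction hypothesis applies to that map at full regularity $k>\max\{0,m-1-n\}$, and this closes step (b).

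Without this lemma, what you have is Milnor's proof of the $C^\infty$ case only. Contrary to your remark, that does not suffice for this paper. The threshold $k\ge\max\{1,l+1\}$ in the paper's Sard--Smale theorem comes, in Smale's proof, from reducing locally to exactly this finite-dimensional statement for a $C^k$ map $\R^{d}\to\R^{c}$ with $d-c=l$.
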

Thus returning to our original problem stated above, in the finite dimensional setting, if $F$ is smooth enough, then there will be $f_p\co X\to Y$ transverse to $Z$ for a dense set of $p$ and thus $f$ may be ``approximated" by a map transverse to $Z$ (once we find $F$!). 

The infinite-dimensional setting is a bit more complicated. For this, we begin with a few definitions. First, recall that a linear space $V$ with a norm $\| \cdot\|\co V\to \R$ is a \dfn{Banach space} if any Cauchy sequence in $V$ (that is sequence $v_i$ such that for any $\epsilon>0$ there is some $N$ such that $\| v_i-v_j\|< \epsilon$ for all $i, j>N$) converges to a point in $V$. Given two Banach spaces $V$ and $W$ (we will suppress the norms, though they are essential) we say a bounded linear map $L\co V\to W$ is a \dfn{Fredholm operator} if 
\begin{itemize}
\item $L$ has a closed image,
\item the dimension of $\ker F$ is finite, and
\item the dimension of $\rm{coker} F$ is finite,
\end{itemize}
where the $\rm{coker} F$ is simply $W/F(V)$ and is called the \dfn{co-kernel}. We define the index of a Fredholm operator to be
\[
\idx F= \dim \ker(F) - \dim\rm{coker}(F).
\]
Finally, a \dfn{Banach manifold} $M$ is a topological manifold with coordinate charts modeled on a Banach space $(V,\|\cdot\|)$ and having smooth transition maps. We can now generalize Sard's theorem.
\begin{theorem}[Sard-Smale]
Let $X$ and $Y$ be separable Banach manifolds and $h\co X\to Y$ be a $C^k$-map such that 
\[
dh_x\co T_xX\to T_{f(x)}Y
\]
is Fredholm of index $l$ for all $x\in X$. If $k\geq \max\{1, l+1\}$, then the regular values of $h$ are dense in $Y$.  (And for such a $y$, $h^{-1}(y)$ is a manifold of dimension $l$.)
\end{theorem}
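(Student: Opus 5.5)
The plan is to prove the Sard--Smale theorem by reducing it locally to the finite-dimensional Sard's Theorem stated just above. The key observation is that a Fredholm map is, near any point, a finite-dimensional map in disguise. Since $X$ is separable, it is second countable and hence Lindel\"of. So it suffices to show that every $x\in X$ has a neighborhood $U$ such that the regular values of $h|_U$ are open and dense in $Y$. The set of regular values of $h$ is then a countable intersection of open dense sets, which is residual and therefore dense by the Baire category theorem, since $Y$ is a complete metric space locally.

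For the local step, fix $x$ and let $L=dh_x$, a Fredholm operator of index $l$. Since $\ker L$ is finite dimensional it has a closed complement, so $T_xX=K\oplus X_0$ with $K=\ker L$. Since $\img L$ is closed with finite codimension, there is a finite-dimensional $C$ with $T_{h(x)}Y=\img L\oplus C$. Working in charts, I would set
\[
G(v,c)=h(v)+c,
\]
defined on a neighborhood of $(x,0)$ in $X\times C$. Its differential at $(x,0)$ is surjective, with kernel isomorphic to $K$.

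Next, apply the Banach space implicit function theorem (a local splitting). After shrinking and changing coordinates, this puts $h$ in the normal form
\[
h(k,x_0)=\bigl(x_0,\ \phi(k,x_0)\bigr),
\]
where we split $X\cong K\times X_0$ and $Y\cong \img L\times C$, and where $\phi$ is $C^k$ with values in $C$. Then $y=(y_0,c)$ is a regular value of $h|_U$ if and only if $c$ is a regular value of the finite-dimensional map
\[
\phi(\cdot,y_0)\colon K\to C.
\]
This map goes from $\dim K$ dimensions to $\dim C$ dimensions, with $\dim K-\dim C=l$. So the hypothesis $k\ge\max\{1,l+1\}$ is exactly what the finite-dimensional Sard's Theorem needs to make its regular values dense.

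This only gives density in each slice $y_0=\text{const}$, so I would also arrange openness. I would replace $U$ by a neighborhood whose closure $\overline U$ satisfies the property that $h$ restricted to $\overline{U}$ is a closed map. This follows from the local properness of Fredholm maps, because in the normal form the $K$-direction is finite-dimensional and can be taken compact. The critical values of $h|_{\overline U}$ are then the image of a closed set of critical points under a closed map, hence closed, so the regular values are open. Combining the slicewise density with this openness gives open and dense.

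Finally, for a regular value $y$, the preimage $h^{-1}(y)$ is a manifold of dimension $l$. This follows from the implicit function theorem, since $dh$ is surjective there with kernel of dimension $\dim\ker-\dim\operatorname{coker}=l$.

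I expect the main obstacle to be the local normal form together with the openness and properness argument. The smoothness count ($C^k$ with $k\ge l+1$) and the bookkeeping that transfers Fredholm data to the finite-dimensional map are routine once the splitting is in place.
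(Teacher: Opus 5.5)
The paper does not prove this theorem. It is quoted from the literature as the black box behind Theorem~\ref{maintransverse}, so there is no argument in the paper to compare yours with. Your proposal is the standard proof, essentially Smale's original argument, and it is correct. The steps are as follows. A Fredholm splitting plus the inverse function theorem gives the local normal form $h(v,w)=(w,\phi(v,w))$ with $v\in K=\ker L$. Regularity of $h$ at $(w,c)$ then reduces to regularity of the finite-dimensional map $\phi(\cdot,w)\co K\to C$. Since $\dim K-\dim C=l$, the hypothesis $k\geq\max\{1,l+1\}$ is exactly what the finite-dimensional Sard theorem needs. Compactness of a closed ball in $K$ gives local properness, hence openness. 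Lindel\"of plus Baire then globalizes.

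A few points to tighten when you write it out:
\begin{enumerate}
\item The normal form needs the identification $X_0\cong\img L$ via $L$, which is an isomorphism by the open mapping theorem since $\img L$ is closed. The form itself comes from the inverse function theorem applied to $(v,x_0)\mapsto (v,P\,h(v,x_0))$, where $P$ is the projection onto $\img L$ along $C$. Your auxiliary map $G$ is not needed.
\item The local open-and-dense claim should be made for $h|_{\overline U}$, not $h|_U$. Critical points of $h|_U$ can accumulate on $\partial U$, so the regular values of $h|_U$ need not be open. The countable cover should then be by the sets $\overline{U_i}$.
\item Closedness of the critical set uses the fact that surjective bounded operators form an open set.
\item Two steps use metrizability of the Banach manifolds, which is the usual standing convention for this theorem. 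One is ``separable $\Rightarrow$ second countable''. The other is closedness of $h|_{\overline U}$ as a map into $Y$, rather than into a chart, which requires $h(\overline U)$ to lie in a closed subset of the target chart.
\end{enumerate}
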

With this theorem and an improved version of Theorem~\ref{basictransversality} we have the following result that will be useful to prove $\mathcal{M}^p_q$ is a manifold. We first set up some notation. Let $X$ and $P$ be Banach manifolds, and $\pi\co E\to X\times P$ be a Banach vector bundle (the reader should think through what this might mean). Suppose $\phi\co (X\times P)\to E$ is a section and $Z$ is the zero section of $E$. We want to discuss when $\phi$ is transverse to $Z$. Note that $\ker d\pi_e\co T_eE\to T_{\phi(e)}(X\times P)$ is naturally identified with the fiber $E_{\phi(e)}$ of $E$ above $\phi(e)$, since the tangent space to a linear space is naturally the linear space. So $T_eE= \ker d\pi_e\oplus H_x$ where $H_e$ is a ``horizontal" subspace of $T_eE$; that is, it is transverse to $\ker d\pi_e$ and its intersection with $\ker d\pi_e$ is $\{0\}$. In general $H_e$ is determined by a ``connection" on the bundle $E$ but if $e\in Z$ then we can take $H_e=T_e Z$, so there is no need for a connection. We can now define $\nabla_{\phi(e)}\phi$ to be the composition of $d\phi_{\phi(e)}$ with the projection to $E_{\phi(e)}$ using the splitting $T_e E= E_{\phi(e)}\oplus T_eZ$. 

\begin{theorem}\label{maintransverse}
With the notation above,
suppose $\phi\co (X\times P)\to E$ is a section of the bundle and for all $(x,p)\in \phi^{-1}(Z)$, we have 
\begin{enumerate}
\item $\nabla_{(x,p)}\phi \co T_{(x,p)}(X\times P)\to T_{\phi(x,p)}\pi^{-1}((x,p))$ is surjective, and 
\item if $\phi_p$ is the map $\phi_p\co X\to E\co x\mapsto \phi(x,p)$ then $\nabla_{(x,p)}\phi_p\co T_xX\to T_{\phi(x,p)}\pi^{-1}((x,p))$ is Fredholm of index $l$.
\end{enumerate}
Then, for a generic $p\in P$ we have $\phi_p^{-1}(Z)$ is a manifold of dimension $l$.
\end{theorem}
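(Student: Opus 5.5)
The plan is the standard Sard--Smale argument for parametrized transversality. I will prove three things in turn: that the universal moduli space $\mathcal{U}=\phi^{-1}(Z)\subset X\times P$ is a Banach manifold, that the projection $\pi_P\co\mathcal{U}\to P$ is Fredholm of index $l$, and that regular values of $\pi_P$ are exactly the parameters $p$ for which $\phi_p$ is transverse to $Z$. Sard--Smale applied to $\pi_P$ then gives that a generic $p$ is regular. Finally, the conclusion of the Sard--Smale theorem identifies $\pi_P^{-1}(p)=\phi_p^{-1}(Z)$ as a manifold of dimension $l$. Throughout I assume $\phi$ is $C^k$ with $k\geq\max\{1,l+1\}$, and that $X$ and $P$ are separable, so that Sard--Smale applies; these are implicit hypotheses of the statement.

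\emph{Step 1: $\mathcal{U}$ is a Banach manifold.} At $(x,p)\in\mathcal{U}$, hypothesis (1) says the vertical derivative $D:=\nabla_{(x,p)}\phi\co T_xX\oplus T_pP\to E_{(x,p)}$ is surjective. In local trivializations, $\phi$ becomes a map between Banach spaces whose derivative at a zero is $D$. So I would invoke the Banach implicit function theorem to conclude that $\mathcal{U}$ is locally a submanifold with $T_{(x,p)}\mathcal{U}=\ker D$.

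The one subtlety is that the implicit function theorem needs $\ker D$ to have a closed complement. I get this from hypothesis (2). Write $D=D_X+D_P$, where $D_X=\nabla\phi_p$ is Fredholm. The restriction $D|_{T_xX\oplus 0}$ is Fredholm, so its kernel is finite dimensional and complemented. By the standard lemma, a surjective operator that is Fredholm on a closed subspace has complemented kernel. I expect this functional-analytic point to be the main obstacle. I would handle it by choosing a finite-dimensional subspace $F\subset T_pP$ whose image under $D_P$ spans a complement of $\mathrm{im}\,D_X$, and then building a right inverse of $D$ explicitly.

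\emph{Step 2: $\pi_P|_{\mathcal{U}}$ is Fredholm of index $l$.} Its derivative is $d\pi_P\co\ker D\to T_pP$, given by $(\xi,\eta)\mapsto\eta$. Its kernel is $\{(\xi,0):D_X\xi=0\}\cong\ker D_X$. For the cokernel, a vector $\eta$ lies in the image exactly when $D_P\eta\in\mathrm{im}\,D_X$. This gives a map $T_pP\to E/\mathrm{im}\,D_X$, $\eta\mapsto[D_P\eta]$, and it is surjective because $D$ is surjective. Its kernel is the image of $d\pi_P$, so
\[
\mathrm{coker}\, d\pi_P\cong \mathrm{coker}\, D_X .
\]
The image of $d\pi_P$ is closed, being the preimage of a closed set. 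Hence $d\pi_P$ is Fredholm with the same index as $D_X$, namely $l$.

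\emph{Step 3: conclusion.} The same linear algebra shows that $d\pi_P$ is surjective if and only if $D_X$ is surjective. So $p$ is a regular value of $\pi_P$ exactly when $\phi_p$ is transverse to $Z$; this is the infinite-dimensional analogue of Theorem~\ref{basictransversality}. By Sard--Smale applied to $\pi_P\co\mathcal{U}\to P$, regular values are dense; in fact they form a residual set, obtained by covering $\mathcal{U}$ with countably many charts, which separability allows. For such $p$, the set $\phi_p^{-1}(Z)=\pi_P^{-1}(p)$ is a manifold of dimension $\idx d\pi_P=l$. Equivalently, this follows from the implicit function theorem applied to the surjective Fredholm map $D_X$, whose kernel is finite dimensional of dimension $l$.
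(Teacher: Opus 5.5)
Your proposal is correct and follows the route the paper indicates; the paper gives no detailed proof, saying only that the result comes from the Sard--Smale theorem together with an infinite-dimensional version of Theorem~\ref{basictransversality}, and your Steps 1--3 supply that version: the universal zero set $\phi^{-1}(Z)$ is a Banach manifold, and its projection to $P$ is Fredholm of index $l$, with regular values exactly the $p$ for which $\phi_p$ is transverse to $Z$. Your treatment of the complemented kernel (choosing a finite-dimensional $F\subset T_pP$ to build an explicit right inverse) and the identification $\mathrm{coker}\, d\pi_P\cong \mathrm{coker}\, D_X$ are the standard details, correctly handled, as is your note that $C^k$-smoothness and separability are implicit hypotheses needed for Sard--Smale.
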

We can finally sketch the proof of Morse Theorem~$1$.
\begin{proof}[Sketch of the proof of Morse Theorem~$1$]
Recall we have the space $\mathcal{P}^p_q$ of paths between critical points $p$ and $q$ of a Morse function $f$, and its tangent space at $\gamma$ is $\Gamma_d(\gamma^*TM)$. Trying to apply the theorems above, we choose our space of perturbations to be the set $P$ of all Riemannian metrics on $M$. Now let $E$ be the bundle over $\mathcal{P}^p_q\times P$ with fiber at $(\gamma,g)$ the space $\Gamma_d(\gamma^*TM)$. We can now define the section 
\[
\phi\co (\mathcal{P}^p_q\times P)\to E\co (\gamma, g)\mapsto \gamma'(t)+V(\gamma(t))
\]
where $V=\nabla_g f$ is the gradient of $f$ with respect to $g$. 
\bhw
Show that $\phi(\gamma,g)$ is the zero vector field along $\gamma$ if and only if $\gamma$ is a gradient flow line of $\nabla_g f$ from $p$ to $q$. 
\ehw

Now, if we can find the appropriate function spaces for $\mathcal{P}^p_q$, $P$, and $E$, such that $\phi$ is Fredholm and transverse to the zero section in $E$, then there would be a dense set of $g$ for which the Moduli space $\underline{\mathcal{M}}^p_q$ is a manifold. Moreover, if we can compute that the index of $\phi$ is $\idx p-\idx q $ then it would also have the desired dimensions. 

For example, for $P$ one could consider the space of $C^k$-smooth Riemannian metrics on $M$, and for the other spaces one could choose appropriate Sobolev spaces where some number of derivatives have bounded $L^2$-integrals. To carry out the details would be a long diversion for these lecture notes, so we refer the reader to \cite{HutchingsNotes}, but we do briefly indicate why $\phi$ will be transverse to the zero section in the right functional analytic setup. 

Given a tangent vector $(\widetilde{\gamma},\widetilde{g})$ in $T_{(\gamma,g)}(\mathcal{P}^p_q\times P)$ we can represent it by a path $(\gamma_t, g_t)$ such that $(\gamma_0,g_0)=(\gamma,g)$ and $(\dot \gamma_t|_{t=0}, \dot g_t|_{t=0})=(\widetilde{\gamma},\widetilde{g})$ where the dot represents the derivative with respect to the $t$ parameter. Now
\[
\phi(\gamma_t, g_t)(s)= \gamma'_t(s)+ V^t_{\gamma_t(s)}
\]
where $V^t$ is the gradient of $f$ with respect to $g_t$, and the prime denotes the derivative with respect to $s$. Now
\begin{align*}
d\phi_{(\gamma,g)}(\widetilde{\gamma},\widetilde{g})&=\frac{d}{dt} \phi(\gamma_t,g_t)|_{t=0}\\
&= \frac{d}{dt}\left(\frac{d}{ds} \gamma_t(s)\right)\big|_{t=0} + \frac{d}{dt} V^0_{\gamma_t(s)}\big|_{t=0} + \frac{d}{dt} V^t_{\gamma_0(s)}\big|_{t=0}.
\end{align*}
If we choose our path so that $\gamma_t=\gamma$ for all $t$, then we see that 
\[
d\phi_{(\gamma,g)}(\widetilde{\gamma},\widetilde{g})=\frac{d}{dt} V^t_{\gamma(s)}\big|_{t=0}.
\]
We let $\dot V$ be the projection of $\frac{d}{dt} V^t_{\gamma(s)}\big|_{t=0}$ to the fiber of the bundle $E$ (that is to $\Gamma_d(\gamma^*TM)$).
To work out $\dot V$, we recall that $V^t$ is the gradient of $f$ with respect to $g_t$. If we choose local coordinates $x_1,\ldots, x_n$, then we can write
\[
g_t=\sum_{i,j=1}^n (g_t)_{ij} dx^i\otimes dx^j
\]
for some real valued functions $(g_t)_{ij}$. Thinking of $\begin{pmatrix} (g_t)_{ij}\end{pmatrix}$ as a matrix we let $\begin{pmatrix} (g_t)^{ij}\end{pmatrix}$ be its inverse and since 
\[
df= \sum_{i=1}^n \frac{\partial f}{\partial x_i} \, dx_i
\]
we see that 
\[
V^t=\sum_{i,j=1}^n 
(g_t)^{ij}\big|_{t=0} \frac{\partial f}{\partial x_i}\frac{d }{ dx_j}.
\]
Thus
\[
\dot V= \sum_{i,j=1}^n \frac{d}{dt}
(g_t)^{ij}\big|_{t=0} \frac{\partial f}{\partial x_i}\frac{d }{ dx_j}.
\]

We now suppose that $v$ is a vector field along $\gamma$ in $\Gamma_d(\gamma^*TM)$. In local coordiates we can write it $\sum_{i=1}^n  a_i\, \frac{d}{dx_i}$.  Let $A$ be the matrix 
\[
\begin{pmatrix}
a_1& a_2 & \cdots &a_n\\
a_2& 0&\cdots & 0\\
\vdots&& \ddots&\\
a_n&0& \cdots & 0
\end{pmatrix}
\]
with $v$ in the first column and first row and $ 0$'s elsewhere. Now if $g$ is given by the matrix $G=\begin{pmatrix} g_{ij}\end{pmatrix}$ then define the matrix $G_t^{-1}=G^{-1} + tA$ and $G_t$ as its inverse. For $t$ small, the inverse will exist and define a Riemannian metric $g_t$.  We can further choose coordinates near some point on $\gamma$ so that $\nabla f=\begin{pmatrix} 1 & 0 & \cdots & 0\end{pmatrix}^T$. Thus, we see that near this point on $\gamma$ the vector $\dot V=v$. Using a cutoff function, we can find a global family of metrics $g_t$ on $M$ so that near the desired point we still have $\dot V=v$. That is, locally any element of $\Gamma_d(\gamma^*TM)$ is in the image of $\nabla_{(\gamma,g)}\phi$. 

In the appropriate functional analytic setting, this implies that $\nabla_{(\gamma,g)}\phi$ is onto $E_{\gamma,g}=\Gamma_d(\gamma^*TM)$. We sketch some details here. To this end, we need to choose a Banach structure on the bundle $E$. Given a fixed metric on $M$ and we can define the $L^2$-innner product on elements $f,g\in \Gamma_d(\gamma^*TM)$ to be
\[
\langle f,g\rangle= \int_\R \langle f(s), g(s)\rangle\, ds,
\]
where the inner product in the integrand is coming from the metric on $M$. This gives a norm $\|f\|^2=\langle f, f\rangle$ and hence a metric $\Gamma_d(\gamma^*TM)$ thought of as smooth sections. We can now complete this space with respect to this metric to obtain $L^2_d(\gamma^*TM)$, the space of $L^2$-integrable sections of $\gamma^*TM$. 

We would now like to show that the orthogonal complement (with respect to the $L^2$-inner product) of the image of $d\phi_{(\gamma,g)}$ in $L^2_d(\gamma^*TM)$ is the trivial vector space $\{0\}$. So let $\omega$ be an element of $L^2_d(\gamma^*TM)$ orthogonal to the image of $d\phi_{(\gamma,g)}$. Recall that $d\phi_{(\gamma,g)}(0,\widetilde{g})=\dot V$, where $\dot V$ is defined above. We will show that anything orthogonal to $\dot V$ for all $\dot V$ is actually zero (notice that this shows that a vector even being orthogonal to a subset of $\img(d\phi_{(\gamma,g)})$ is enough to show it is zero). So we assume that, for any $\dot V$, we have
\[
\int_\gamma\langle \dot V,\omega\rangle\, ds = 0.
\]
Now, for any $x\in\R$, we can choose a neighborhood $N$ of $\gamma(x)$ so that $N\cap \img(\gamma)$ is the image of an interval $I$. Choose a function $h\co \R\to \R$ so that $h=1$ near $x$, $h=0$ oustide $I$, and $h\geq 0$. Now set $v=h\omega$. From above, we can choose $\widetilde{g}$ so that $\dot V=v$ where $h$ is non-zero. We now compute
\[
0=\int_\R \langle \dot V, \omega\rangle\, ds = \int_\R h\langle \omega, \omega\rangle\, ds.
\]
This implies that $h\|\omega\|^2=0$ and so $\|\omega\|=0$ near $x$. But since $x$ was arbitrary $\omega=0$, as we wanted to prove. Note, there are details here involving the Banach structure we use on the domain and the continuity (or lack thereof) of $\omega$, but we leave it to the reader and the references above to sort out these details. The main point was to give an indication of the flavor of the argument necessary to prove some sort of ``transversallity'' result.

In a similar fashion, but with more work, one may also show that $\nabla_{(x,p)}\phi$ for a fixed $g$ satisfies the conditions in Theorem~\ref{maintransverse}, which gives us Morse Theorem~~$1$. 
\end{proof}

\subsubsection{Morse Theorem~$3$} Given a Morse function $f\co M\to \R$ we choose a Riemannian metric $g$ so that Morse Theorem~$1$ holds. We want to consider a sequence of flow lines from the critical point $p$ to the critical point $q$. Let $\gamma_i$ be such a sequence and assume it is a Cauchy sequence, so it should converge. To analyze this situation, we note that one can choose a small ball $B_1$ about $p$ so that $B_1$ contains no other critical points and any gradient flow line from $p$ transversely intersects $\partial B_1$ in one point. 
\bhw
Use the Morse Lemma, Theorem~\ref{ML}, to prove that such a ball exists. 
\ehw
Let $x_i$ be the unique point at which $\gamma_i$ intersects $\partial B_1$. One may easily check that $\{x_i\}$ is a Cauchy sequence in the compact set $\partial B$ and so it converges to a point $x_\infty$. See the upper left diagram in Figure~\ref{fig:bfl2}. Now let $\widetilde\gamma_1$ be the gradient flow line through $x_\infty$. If $\widetilde\gamma_1$ flows to $q$ then the $\gamma_i$ converge to an element of $\mathcal{M}^p_q$. We now need to see that the $\gamma_i$ converge to a broken flow line. 
\begin{figure}[htb]{
\begin{overpic}
{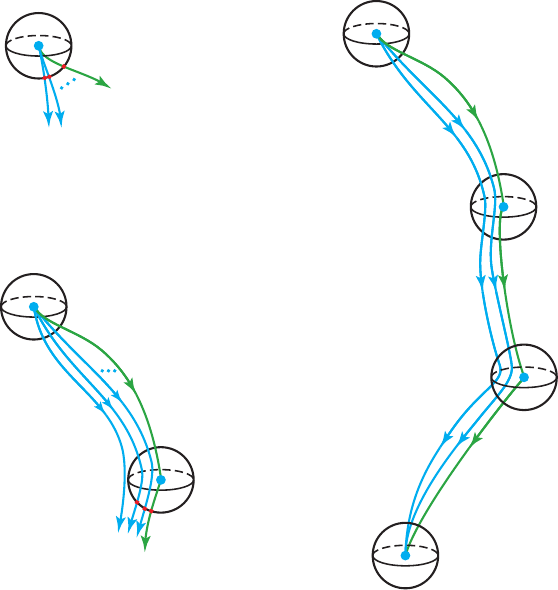}\footnotesize
\put(10, 261){\color{lblue}$p$}
\put(10, 225){\color{lblue}$\gamma_1$}
\put(33, 225){\color{lblue}$\gamma_2$}
\put(48, 249){\color{dgreen}$\widetilde\gamma_1$}
\put(12, 241){\color{red}$x_1$}
\put(33, 252){\color{red}$x_\infty$}
\put(8, 135){\color{lblue}$p$}
\put(82, 51){\color{lblue}$r_2$}
\put(20, 100){\color{lblue}$\gamma_1$}
\put(70, 100){\color{dgreen}$\widetilde\gamma_1$}
\put(73, 25){\color{dgreen}$\widetilde\gamma_2$}
\put(172, 267){\color{lblue}$p$}
\put(246, 183){\color{lblue}$r_2$}
\put(256, 101){\color{lblue}$r_3$}
\put(200, 15){\color{lblue}$q$}
\put(223, 243){\color{dgreen}$\widetilde\gamma_1$}
\put(250, 140){\color{dgreen}$\widetilde\gamma_2$}
\put(222, 50){\color{dgreen}$\widetilde\gamma_3$}
\end{overpic}}
\caption{Converging to a broken flow line. In the top left, we see the sequence of gradient flow lines in $\mathcal{M}^p_q$ near $p$. They intersect $\partial B_1$ in points $x_i$ that converge to $x_\infty$. The flow line $\widetilde\gamma_1$ is the flow line through $x_\infty$. On the bottom left we see the critical point $r_2$ to which $\widetilde{\gamma}_1$ flows. We also see the ball $B_2$ around $r_2$ used to find the next flow line in the broken flow line. On the right we see the complete broken flow line to which the $\gamma_i$ converge.}
\label{fig:bfl2}
\end{figure} 

So suppose $\lim_{t \to \infty}\widetilde\gamma_1(t)$ is the critical point $r_2\not= q$ (recall, we are taking $r_1=p$). Since $\mathcal{M}^p_{r_2}$ is non-empty, we know its dimension is greater than or equal to $0$ and so $\idx r_2< \idx p$. We can parameterize the $\gamma_i$ so that $\gamma_i(0)=x_i$ and by definition $\widetilde\gamma_1(0)=x_\infty$. We can now choose a small neighborhood $B_2$ of $r_2$ as above (that is, so that any flow line asymptotic to $r_2$ is transverse to $\partial B_2$, and other flow lines that intersect $B_2$ either intersect $\partial B_2$ transversely twice or are tangent to $\partial B_2$). 
We now know there is some $t_1>0$ such that $\widetilde{\gamma}_1(t_1)$ is the unique intersection point of $\widetilde{\gamma}_1$ with $\partial B_2$. Since the gradient flow is smooth and the initial conditions of the $\gamma_i$ converge to that of $\widetilde{\gamma}_1$, we know that there is some $N_1$ such that for $i\geq N_1$, $\gamma_i$ also intersects $\partial B_2$ transversely. Since the $\gamma_i$ do not limit to $r_2$ we know they must intersect $\partial B_2$ at another point. Denote this point by $x_i^2$. The sequence $\{x^2_i\}$ converges to a point $x^2_\infty$. We now let $\widetilde{\gamma}_2$ be the flow line through $x^2_\infty$. 

We can continue with this process until we find a $\widetilde{\gamma}_n$ that flows to $q$. 
\bhw
Prove this last statement. Specifically, why must one of the trajectories in the broken flow line eventually end at $q$?
\ehw
To see that the sequence $\gamma_i$ converges to the broken flow line $\cup_{i=1}^n  \widetilde{\gamma}_i$ we note that one may take the balls $B_i$ used to find the $\widetilde{\gamma}_i$ to be arbitrarily small, so points of the $\gamma_i$ in these balls are very close to the images of the $\widetilde{\gamma}_i$. There are finite intervals of time for which the $\gamma_i$ and $\widetilde{\gamma}_i$ are outside the $B_i$, and thus by the smoothness of the flow, we know for large enough $j$, the images of the $\gamma_j$ will be very close to the images of the $\widetilde{\gamma}_i$. 
\bhw
Carefully write out this convergence argument. 
\ehw
\begin{remark}
We note that the geometry of gradient flows that determines the ``compactness" result is simply the fact that flow lines of vector fields depend smoothly on their initial conditions. 
\end{remark}


\subsubsection{Morse Theorem~$4$} We now turn to the ``gluing" theorem. The basic idea here is to build an ``approximate flow line" and then prove there is a unique actual flow line associated with it. The second step is accomplished by a theorem of Floer \cite[Proposition~24]{Floer1995} who generalized prior results of Picard and Newton. More specifically, Newton gave an iterative scheme to find zeros of a real-valued function, and if one has an ``approximate zero" and one has appropriate control over the derivative of the function, then one can guarantee there is actually a zero near the approximate zero. Picard then gave an iterative scheme to find a solution to an initial value problem with similar control over where the solution is relative to an approximate solution. Floer finds a version of these ideas that can be applied to partial differential equations.  
We will not state Floer's result here as it is a bit technical, but we give an indication of why one would hope to be able to ``guess" an almost solution to an equation and then hope that there is an actual solution nearby. To this end, suppose we have a function $f\co \R\to \R$ and want to find a zero of $f$. Write $f$ as
\[
f(x)=f(0)+L(x) + N(x)
\]
where $L$ is a linear map (given by multiplication by $f'(0)$) and $N(x)$ is a non-linear map. Suppose we know that $f(x_0)$ is very small relative to $f'(0)$ (that is, the norm of $L$), then if $N$ were not there, we know there would be a zero of $f$ very close to $x_0$. Now, if $N(x)$ is very small in comparison to $L(x)$ for all $x$ near $x_0$, then one can also conclude that there is a zero of $f$ near $x_0$, and we can estimate how close the zero is to $x_0$ in terms of a bound on $L^{-1}N(x)$.  
\bhw
Work out the details of the above discussion. 
\ehw
The above gives a very simple indication of the type of information and bounds Floer uses to prove his result. 


We are now left to see how to construct an approximate flow line to a give broken flow line. Recall that we saw above that the space of parameterized flow lines $\underline{\mathcal{M}}^p_q$ is the preimage of the zero section of 
\[
\phi\co (\mathcal{P}^p_q\times P)\to E\co (\gamma, g)\mapsto \gamma'(t)+V(\gamma(t)).
\]
So we want to find an approximate zero of $\phi$ and then use a theorem of Floer to prove that there is an actual zero of $\phi$ nearby and actually parameterize the nearby zeros. 

Suppose $\widetilde{\gamma}_1\in\mathcal{M}^p_r$ and $\widetilde{\gamma}_2\in \mathcal{M}^r_q$ form a broken flow line. We can define a new path from $p$ to $q$ as follows. For a given positive number $\rho$ choose a parameterization of the flow lines so that $\widetilde{\gamma}_1(-1)$ and $\widetilde{\gamma}_2(1)$ are the points where the flow lines intersect the ball $B_2$ of radius $\rho$ about $r$ discussed in the previous section. Now the path $\widetilde{\gamma}_1\#_\rho\widetilde{\gamma}_2$ will be $\widetilde{\gamma}_1$ for $t\leq -1$, $\widetilde{\gamma}_2$ for $t\geq 1$ and for $t\in[-1,1]$ will be a (well-chosen) path in $B_2$. See Figure~\ref{fig:glue}.
\begin{figure}[htb]{
\begin{overpic}
{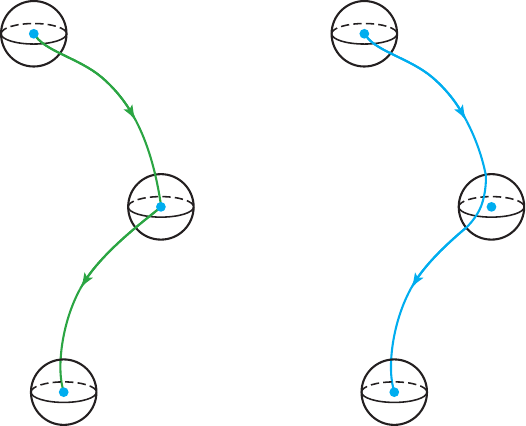}\footnotesize
\put(8, 188){\color{lblue}$p$}
\put(68, 104){\color{lblue}$r$}
\put(22, 14){\color{lblue}$q$}
\put(167, 188){\color{lblue}$p$}
\put(241, 104){\color{lblue}$r$}
\put(182, 14){\color{lblue}$q$}
\put(65, 160){\color{dgreen}$\gamma_1$}
\put(40, 55){\color{dgreen}$\gamma_2$}
\put(230, 150){\color{lblue}$\widetilde{\gamma}_1\#_\rho\widetilde{\gamma}_2$}
\end{overpic}}
\caption{On the left we see a broken flow line, and on the right we see an approximate flow line near the broken flow line.}
\label{fig:glue}
\end{figure}

One can see that $\widetilde{\gamma}_1\#_\rho\widetilde{\gamma}_2$ is ``almost" a geodesic, in the sence that for most $t$ values it is a geodesic, and if one chooses the path correctly for $t\in[-1,1]$ then it is not far from being a geodesic on all of $\R$ (notice that this path can be chosen to have very small derivitive and the grandient is also very small here so their difference is very small). With some work, one may use an infinite-dimensional version of the above mentioned result to show that near $\widetilde{\gamma}_1\#_\rho\widetilde{\gamma}_2$ there is a $1$-parameter family of flow lines in $\mathcal{M}^p_q$ that converge to the broken flow line $\widetilde{\gamma}_1\cup\widetilde{\gamma}_2$.

\subsubsection{Another approach}\label{aa}
Given a critical point $p$ of a Morse function $f\co M\to \R$ and a metric $g$ on $M$, we can define the \dfn{unstable manifold} of $p$ to be 
\[
U_p=\{x\in M \text{ such that } \lim_{t\to -\infty}\gamma_x(t)=p\},
\]
and the \dfn{stable manifold} of $p$ to be
\[
S_p=\{x\in M \text{ such that } \lim_{t\to \infty}\gamma_x(t)=p\}.
\]
These are also sometimes referred to as the \dfn{descending manifold} and \dfn{ascending manifold}, respectively. One may prove, \cite{BanyagaHurtubise2004}, that $U_p$ and $S_p$ are both open disks of dimension $k$ and $n-k$, respectively, where $k=\idx p$.

We note that 
\[
\mathcal{M}^p_q=U_p\cap S_q.
\]
One must interpret this equation correctly. Specifically, $U_p\cap S_q$ is the image of the elements in $\mathcal{M}^p_q$. So we can define Morse homology by counting the flow lines in $U_p\cap S_q$. One can interpret Morse Theorem~$1$ as the statement that all the stable and unstable manifolds of the critical points of $f$ are transverse to each other. Some approaches to Morse homology do not define the moduli spaces $\mathcal{M}^p_q$, but instead focus on $U_p$ and $S_p$. While this is a fine approach, the one presented above gives a better introduction to other homology theories. The main reason we mention this approach is the following fact.
\bhw
Show that one may give $M$ the structure of a $CW$ complex using the unstable manifolds. 
\ehw
With this observation, one can easily identify Morse homology with cellular homology (which is, of course, known to be isomorphic to singular homology). 

\section{Lagrangian Floer homology}

One can think of Lagrangian Floer homology as an infinite-dimensional version of Morse homology applied to the ``action functional on the space of paths between two Lagrangian submanifolds", but it is not possible to carry out the details. The reason for this is that the ``gradient flow equation" for the action functional turns out to be an elliptic partial differential equation (as opposed to a parabolic equation which is the infinite dimensional analog of the finite dimensional gradient flow equation discussed above). Elliptic equations do not typically have solutions for ``initial value problems". More specifically, given a path between the two Lagrangian submanifolds, one cannot expect to find a solution to these equations, and thus the definitions of the moduli spaces above do not work in this case. Luckily, we can reformulate the above moduli spaces in terms of this elliptic equation. More specifically, Floer reformulated this idea in terms of pseudo-holomorphic curves. We will forgo this intuitive idea behind the theory and go straight to the definition of Lagrangian Floer theory that can be made rigorous. 

We begin by recalling a few basic facts from symplectic geometry to set the stage for Lagrangian Floer theory. The objects counted in the boundary map defining Lagrangian Floer theory are pseudo-holomorphic curves, so we discuss these in the next section. As we saw above, compactness issues are key to defining a homology theory, so in Section~\ref{Gc}, we discuss Gromov compactness, which details the behavior of limits of pseudo-holomorphic curves (under nice conditions). With this background in hand, we then turn to defining Lagrangian Floer homology in Section~\ref{LFh}, and then end with a section discussing applications of Lagrangian Floer homology. 

\subsection{A little symplectic geometry}
There are many good introductions to symplectic geometry, for example \cite{CannasDaSilva2001, McDuffSalamon98}. We refer the reader to those sources for a more comprehensive introduction to the subject, but discuss a few basic definitions and results here to set the stage for Lagrangian Floer homology. 

A \dfn{symplectic manifold} is a pair $(X,\omega)$, where $X$ is a manifold and $\omega$ is a $2$-form that is closed, that is $d\omega=0$, and non-degenerate, that is if at any point $x\in X$ have have that $\omega(v,w)=0$ for all $w\in T_xX$ implies that $v=0$. We call $\omega$ a symplectic form, or symplectic structure, on $X$. Some of the first things one observes about a symplectic manifold is that it dimension must be even, say $2n$ dimensional, and $\omega$ being non-degenerate is equivalently to $\omega^n$ being a volume form on $X$ (that is that $\omega^n$ is a non-zero $2n$-form at all points of $X$), where $\omega^n$ means the $n$-fold wedge product of $\omega$ with itself. 

\bex\label{stdex}\label{stdex}
The most basic example of a symplectic manifold is $(\R^{2n}, \omega_{std})$, where 
\[
\omega_{std}=\sum_{i=1}^n dx_i\wedge dy_i,
\] 
and $(x_1,y_1,\ldots, x_n, y_n)$ are Euclidean coordinates on $\R^{2n}$. One may easily check that $\omega_{std}=-d\lambda_{std}$ where 
\[
\lambda_{std}=\sum_{i=1}^n y_i \, dx_i,
\]
and so $\omega_{std}$ is clearly closed. It is also a simple exercise to see that $\omega^n$ is a volume form. So we see that $(\R^{2n}, \omega_{std})$ is indeed a sympletic manifold. 
\eex
In general, if a symplectic form $\omega$ is the differential of a $1$-form, then we call it an \dfn{exact symplectic form}. So the above example is an example of an exact symplectic form. 
\bex\label{canonicalcotangent}
Another key example of a symplectic manifold is $(T^*M,\omega_{can})$ where $T^*M$ is the cotangent bundle of any smooth manifold $M$ and $\omega_{can}$ is defined as follows. In local coordinates $(q_1,\ldots, q_n)$ on an open set $U$ in $M$ we can take local coordinates for $T^*U=U\times \R^n$ to be $(q_1,\ldots, q_n, p_1,\ldots, p_n)$ where the last $n$-coordinates are coordinates for $\R^n$ in the basis $\{dq_1,\ldots, dq_n)$. We can now define 
\[
\lambda_{can}=\sum_{i=1}^n p_i\, dq_i
\]
on $U$. 
\bhw
Show that this defines a $1$-form on all of $T^*M$. Specifically, cover $M$ in coordinate charts and show that the $\lambda_{can}$ defined above in each of these coordinate charts defines the same $1$-form whenever the coordinate charts overlap. 
\ehw
Thus we have $\lambda_{can}$ defined on all of $T^*M$ so we define $\omega_{can}=-d\lambda_{can}$. This gives a closed $2$-form that is easy to verify is also non-degenerate. 
\eex
Symplectic geometry originally grew out of an elegant reformulation of classical mechanics and has many beautiful and deep applications. For the sake of brevity, we refer the reader to the references mentioned at the beginning of this section for more details. 

A fundamental theorem in symplectic geometry, called the Darboux theorem, says that for any symplectic manifold $(X,\omega)$ and any point $x\in X$ there is a neighborhood $U$ of $x$ in $X$, a neighborhood $V$ of the origin in $\R^{2n}$, and a diffeomorphisms $\phi\co U\to V$ such that $\phi^*\omega_{std}=\omega$. That is, any symplectic manifold, locally, looks like the example above! We note that a diffeomorphism $\phi$ from one symplectic manifold, $(X,\omega)$, to another, $(X',\omega')$, is called a \dfn{symplectomorphism}.

A common means to build symplectomorphisms is via Hamiltonian flows. Specifically, given a function $H\co X\to \R$ on a symplectic manifold $(X, \omega)$ there is a unique vector field $v_H$ on $X$ such that
\[
\iota_{v_H}\omega= dH. 
\]
\bhw
Using the non-degeneracy of $\omega$, show that such a $v_h$ exists and is unique. 
\ehw
The function $H$ is called a \dfn{Hamiltonian} and the vector field $v_H$ is called a \dfn{Hamiltonian vector field}. Let $\phi_H\co M\to M$ be the time $1$ flow of $v_H$. One may easily check that $\phi_H$ is a symplectomorphism. We call $\phi_H$ a \dfn{Hamiltonian symplectomorphism}. We note that it is not important that we choose the time $1$ flow of $v_H$ because by scaling $H$ we scale $v_H$ and the time $1$ flow of these rescalled vector fields is simply the time $t$ flow of $v_H$ for some $t \not=1$. We note that on most symplectic manifolds, there are symplectomorphisms that are not Hamiltonian symplectomorphisms, but we do not pursue this matter here. 


The last ingredient we need for our discussion of Lagrangian Floer homology is Lagrangian submanifolds. A \dfn{Lagrangian submanifold} $L$ of a symplectic manifold $(X,\omega)$ is a submanifold of $X$ having half the dimension of $X$ and for which $\omega$ vanishes on the tangent space of $L$. 
\bex
Any oriented surface $\Sigma$ can be made a symplectic manifold simply by choosing an area form $\omega$ on $\Sigma$. We note that any $1$-dimensional manifold in $\Sigma$ will be Lagrangian. So we have \em{lots} of examples of Lagrangian submanifolds. 
\eex
\bhw
Show that the product of symplectic manifolds is symplectic in a natural way, and the product of Lagrangian submanifolds in each symplectic manifold is a Lagrangian submanifold of the product symplectic manifold. 
\ehw
From the previous exercise, we can find many Lagrangian tori in symplectic manifolds built as products of orientable surfaces. 

We end this section with an example showing the key nature of Lagrangian submanifolds in the study of symplectic manifolds. 
\bex
Let $(X,\omega)$ be a symplectic manifold and $\phi\co X\to X$ be a symplectomorphism. We now consider the symplectic manifold $(W,\omega_W)$ where $W=X\times X$ and $\omega_W=\omega\oplus -\omega$. To be a bit more precise about the definition of $\omega_W$ we let $\pi_i\co W\to X$ be the projection of $W$ to the $i^{th}$ factor of $W$ and then the more proper definition of $\omega_W$ is
\[
\omega_W=\pi_1^* \omega - \pi_2^* \omega.
\]
Now, if we let $\Gamma_\phi$ be the graph of $\phi$, that is $\Gamma_\phi=\{(x,\phi(x)) : x\in X\}$, then $\Gamma_\phi$ is a Lagrangian submanifold of $(W,\omega_W)$. Indeed, consider 
\[
\Phi\co X\to W\co x\mapsto (x,\phi(x)).
\] 
This map clearly parameterizes $\Gamma_\phi$ and the restriction of $\omega_W$ to the tangent space of $\Gamma_\phi$ is simply $\Phi^*\omega_W$. We now compute
\[
\Phi^*\omega_W = \Phi^*\pi_1^*\omega-\Phi^*\pi_2^*\omega=\omega-\phi^*\omega=0,
\]
since $\phi$ is a symplectomorphism. 
\eex
\bhw
Considering the previous example, show that symplectomorphisms of $(X,\omega)$ correspond to a certain subset of Lagrangian submanifolds of $(W,\omega_W)$. 
\ehw

\subsection{Pseudo-holomorphic curves}
Pseudo-holomorphic curves have been a cornerstone of the study of symplectic manifolds since their introduction by Gromov in \cite{Gromov85}, and, in particular, are used in the definition of Lagrangian Floer homology. In this section, we will discuss a few basic facts about pseudo-holomorphic curves. 

An \dfn{almost-complex structure} on a manifold $M$ is a bundle map
\[
J\co TM \to TM
\]
covering the identity on $M$ such that $J^2=-\text{id}_{TM}$. Notice that $J$ gives each tangent space $T_xM$ the structure of a complex vector space. 
\bex
Show that a complex manifold has a canonical almost-complex structure.
\eex
We note that not all almost-complex structures on a manifold come from a complex structure on the manifold. In particular, there are many manifolds that do not admit a complex structure, but do admit almost-complex structures. However, it is true that on an oriented surface $\Sigma$ complex structures and almost-complex structures are equivalent. 

Now, given a closed Riemannian surface $\Sigma$ then it has a natural complex structure (namely, one simply rotates a tangent vector anti-clockwise by $\pi/2$). We will denote this almost-complex structure $j$. Now given a manifold $M$ with an almost-complex structure $J$, we say a map 
\[
u\co \Sigma\to M
\]
is \dfn{pseudo-holomorphic} if it intertwines the almost-complex structures, that is 
\[
du\circ j = J\circ du
\]
where $du\co T\Sigma\to TM$ is the derivative of $u$. 
\bex
If $M=\C$ with the almost-complex structure coming from the complex structure on $\C$, then show that a function $u\co \Sigma\to \C$ is pseudo-holomorphic if and only if it is holomorphic in the sense of complex analysis. 
\eex

Below, we will need to have our complex structure related to a symplectic structure. To discuss this relation, we consider the symplectic manifold $(X,\omega)$. An almost-complex structure on $X$ is said to be \dfn{tamed} by $\omega$ if 
\[
\omega(v, Jv)>0,
\]
for any non-zero tangent vector $v$. We also say that $J$ is \dfn{compatible} with $\omega$ if $J$ is tamed by $\omega$ and $\omega$ is $J$-invariant, which means that $\omega(Jv,Jw)=\omega(v,w)$. The main result we will need about compatible almost-complex structure is the following. 
\begin{lemma}
The space of almost-complex structures compatible with a fixed symplectic structure is non-empty and convex. 
\end{lemma}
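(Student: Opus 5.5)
The plan is to reduce everything to pointwise linear algebra on each tangent space $(T_xX,\omega_x)$, and to do it in a way that depends smoothly on $x$. The key device is the correspondence between compatible almost-complex structures and Riemannian metrics: if $J$ is compatible with $\omega$, then $g_J(v,w)=\omega(v,Jw)$ is a Riemannian metric. Symmetry follows from $J$-invariance of $\omega$, and positivity is exactly tameness. Conversely, I will build a map $g\mapsto J_g$ from all Riemannian metrics to compatible almost-complex structures, and show it satisfies $J_{g_J}=J$.

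\textbf{Nonemptiness.} Every manifold has a Riemannian metric $g$ (partition of unity), so it suffices to construct $J_g$.
\begin{enumerate}
\item Non-degeneracy of $\omega$ gives a unique bundle isomorphism $A\co TX\to TX$ with $\omega(v,w)=g(Av,w)$. This uses the same contraction argument as in the exercise on existence of $\nabla f$.
\item Skew-symmetry of $\omega$ makes $A$ skew-adjoint with respect to $g$. Hence $-A^2=A^*A$ is $g$-self-adjoint and positive definite.
\item It therefore has a unique positive self-adjoint square root $P=\sqrt{-A^2}$, which commutes with $A$.
\item Set $J_g=AP^{-1}$, the polar decomposition of $A$.
\item Check the required identities:
\begin{itemize}
\item $J_g^2=A^2P^{-2}=-\text{id}$;
\item $\omega(J_gv,J_gw)=\omega(v,w)$, using that $J_g$ is $g$-orthogonal and commutes with $A$;
\item $\omega(v,J_gv)=g(Av,AP^{-1}v)=g(Pv,v)>0$ for $v\neq 0$.
\end{itemize}
\item Smoothness in $x$ holds because the square root is a smooth (indeed real-analytic) map on positive definite matrices. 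One can see this from the inverse function theorem applied to $P\mapsto P^2$, or from a Cauchy integral formula.
\item If $g=g_J$ already, then $A=-J$ up to the sign convention, so $P=\text{id}$ and $J_g=J$. Thus $J\mapsto g_J$ and $g\mapsto J_g$ exhibit the compatible structures as a retract of the space of metrics.
\end{enumerate}

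\textbf{The second claim and the main obstacle.} This is where I expect the real difficulty, because the second claim is not literally true as stated. The set of compatible $J$ is not closed under convex combination, since $(1-t)J_0+tJ_1$ need not square to $-\text{id}$. For example, on $(\R^2,dx\wedge dy)$ take
\[
J_0=\begin{pmatrix}0&-1\\1&0\end{pmatrix},\qquad J_1=\begin{pmatrix}1&-2\\1&-1\end{pmatrix}.
\]
One checks directly that both are compatible. However, their midpoint squares to $-\tfrac54\,\text{id}$, so it is not an almost-complex structure.

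What I would actually prove is the statement that is used later, namely that the space is contractible, or ``convex in the sense of metrics''. The space of Riemannian metrics is genuinely convex, since a convex combination of positive definite forms is positive definite. Given compatible $J_0$ and $J_1$, the path
\[
J_t=J_{(1-t)g_{J_0}+tg_{J_1}}
\]
consists of compatible structures and joins $J_0$ to $J_1$. More generally, the straight-line contraction of the space of metrics, pushed forward by the continuous retraction $g\mapsto J_g$, contracts the space of compatible $J$. This makes the space path connected and contractible, which is all that is needed to show Lagrangian Floer homology is independent of the choice of $J$. The only technical points to verify carefully are the continuity of $g\mapsto J_g$ in the relevant $C^k$ topologies and the retraction identity $J_{g_J}=J$.
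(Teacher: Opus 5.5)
Your proposal is correct, and so is your objection to the statement itself. Your $J_1$ satisfies $J_1^2=-\mathrm{id}$ and $\det J_1=1$, so $\omega$ is $J_1$-invariant, and $\omega(v,J_1v)=(v_1-v_2)^2+v_2^2>0$. But $\bigl(\tfrac12(J_0+J_1)\bigr)^2=-\tfrac54\,\mathrm{id}$, so the set of compatible structures is genuinely not convex.

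The paper gives no proof of this lemma; it simply defers to the references listed at the start of the section. In those references (e.g.\ McDuff--Salamon) the result is stated as ``non-empty and contractible''. It is proved exactly as you propose: the polar decomposition $J_g=A(-A^2)^{-1/2}$ together with the retraction $g\mapsto J_g$ of the convex space of metrics. Contractibility is also all that the later invariance argument needs, namely paths of $J$'s and homotopies between such paths.

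There is one slip to fix in your last step. With your conventions $\omega(v,w)=g(Av,w)$ and $g_J(v,w)=\omega(v,Jw)$, you get $g_J(Jv,w)=\omega(Jv,Jw)=\omega(v,w)$. Hence $A=J$ exactly, not ``$-J$ up to sign''. This is not cosmetic: if $A$ were $-J$, your formula would give $J_{g_J}=-J$. The retraction identity $J_{g_J}=J$, on which your contractibility argument rests, would then fail.
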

The proof of this lemma is straightforward, but a bit involved. It can be found in the references mentioned at the beginning of this section. 

\subsection{Gromov compactness}\label{Gc}
As we saw with Morse homology, understanding the compactness of the geometric objects counted in the homology boundary map is a key to making a homology theory. So, before we start to define Lagrangian Floer homology, we will discuss what can happen when considering sequences of pseudo-holomorphic curves. 

To this end, we first define ``singular" Riemannian surfaces. We start with a surface $\Sigma$ and a collection of properly embedded curves $\gamma_1,\ldots, \gamma_n$. See Figure~\ref{fig:precusp}. 
\begin{figure}[htb]{
\begin{overpic}
{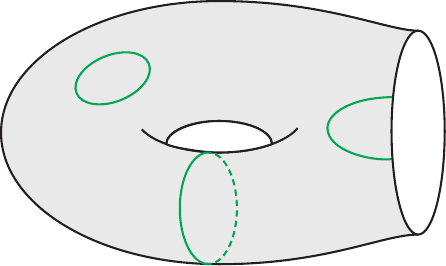}
\put(67, 76){\color{dgreen}$\gamma_1$}
\put(72, 25){\color{dgreen}$\gamma_2$}
\put(150, 52){\color{dgreen}$\gamma_3$}
\end{overpic}}
\caption{The properly embedded curves $\gamma_i$ on $\Sigma$.}
\label{fig:precusp}
\end{figure} 
Let $\widehat\Sigma$ be the result of collapsing each $\gamma_i$ to a point, which we denote by $z_i$. See Figure~\ref{fig:cusp}. We note that if $j$ is an almost-complex structure on $\Sigma$, then it induces one on $\widehat\Sigma-\{z_1, \cdots, z_n\}$, which we still denote by $j$. We call $(\widehat\Sigma, j)$ a \dfn{cusped curve}. 
\begin{figure}[htb]{
\begin{overpic}
{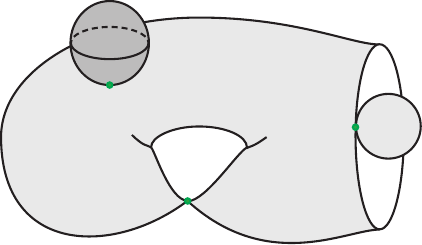}
\footnotesize
\put(50, 68){\color{dgreen}$z_1$}
\put(75, 20){\color{dgreen}$z_2$}
\put(160, 55){\color{dgreen}$z_3$}
\end{overpic}}
\caption{The cusped surface $\widehat\Sigma$ and the images $z_i$ of the $\gamma_i$.}
\label{fig:cusp}
\end{figure} 
We note there is a natrual quotient map $\pi\co \Sigma\to \widehat\Sigma$ and we define the \dfn{singular set} of $\widehat\Sigma$ to be $s(\widehat\Sigma)=\{ z_1,\ldots, z_n\}$. 

Now, given an oriented Riemannian surface $(\Sigma, j)$ we get a metric on $\Sigma$ by setting $h(v,w)=\Omega(v,jw)$ where $\Omega$ is any araa form on $\Sigma$ giving the orientation on $\Sigma$. Given an almost-complex structure $J$ compatible with a symplectic form $\omega$ on a manifold $X$ can now define the \dfn{action} of a map $u\co \Sigma\to X$ to be
\[
\mathcal{A}(u)=\int_\Sigma \| du\|^2 \Omega,
\]
where $\| du\|$ at the point $x$ is defined to be
\[
\| du\|_x = \text{max} \{\|du(v)\|_g: v\in T_x\Sigma, \|v\|_h=1 \}
\]
and $\|v\|_h$ is the length of $v$ measured with respect to the metric $h$ and similarly for $\|w\|_g$ where $g(v,w)=\omega(v, Jw)$. So pointwise $\|d u\|_x$ measures the {\em maximal distortion of vectors in $T_x\Sigma$ by $du_x$} and $\mathcal{A}(u)$ is just the integral of this over $\Sigma$. We note that one can similarly define the action for a map from a cusped curve to $X$. 

\bhw
Show that 
\[
\mathcal{A}(u)\leq \int_\Sigma u^*\omega,
\]
and that one has equality if and only if $u$ is pseudo-holomorphic. 
\ehw

Given a sequence of pseudo-holomorphic curves $u_k\co (\Sigma, j_k) \to (X,J)$ and a cusped curve $\pi:\Sigma \to \widehat\Sigma$ with a almost-complex structure $j$, then we say the sequence $\{u_k\}$ converges to $u_\infty\co \widehat\Sigma\to X$ if 
\begin{enumerate}
\item there are maps $\phi_k\co \Sigma\to \widehat\Sigma$ and curves $\gamma_1^{k}, \ldots, \gamma_n^{k}$ such that
\[
\phi_k|_{\Sigma-\cup \gamma_i}\co (\Sigma-\cup \gamma_i)\to (\widehat \Sigma-s(\widehat\Sigma))
\] 
is a diffeomorphism,
\item the sequence of almost-complex structures $\{(\phi_k|_{\Sigma-\cup \gamma_i}^* j_k\}$ $C^\infty$-converges to $j$ on compact subsets as $k\to \infty$, 
\item the sequence $\{u_k\circ\phi_k^{-1}|_{\widehat\Sigma-s(\widehat\Sigma)}\}$ $C^\infty$-converges to $u_\infty|_{\widehat\Sigma-s(\widehat\Sigma)}$ on compact subsets as $k\to \infty$, and 
\item $\lim_{k\to \infty} \mathcal{A}(u_k)=\mathcal{A}(u_\infty)$.  
\end{enumerate}
\begin{theorem}[Gromov compactness, \cite{Gromov85}]\label{gct}
Let $(X, \omega)$ be a symplectic manifold with an almost-complex structure $J$. If $u_k\co (\Sigma, j_k)\to (X,J)$ is a sequence of pseudo-holomorphic maps with bounded action (here if, $\partial \Sigma\not=\emptyset$ then $u_k(\partial \Sigma)\subset L$ for some fixed Lagrangian submanifold $L$ of $X$), then some subsequence of $\{u_k\}$ converges to a cusp curve $u_\infty:(\widehat\Sigma, j)\to (X,J)$. (We note that $s(\widehat\Sigma)$ could be empty, in which case the $u_k$ converge to a pseudo-holomorphic curve.)
\end{theorem}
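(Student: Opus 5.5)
The plan is the standard bubbling analysis. Bounded action gives, via the exercise relating $\mathcal{A}(u)$ to $\int_\Sigma u^*\omega$, a uniform bound on the energy $E(u_k)=\int_\Sigma \|du_k\|^2\Omega$. I would first fix the domains. If the conformal structures $j_k$ stay in a compact part of Teichm\"uller (or moduli) space, pass to a subsequence with $j_k\to j$ after pulling back by diffeomorphisms. If they degenerate, the Deligne--Mumford/hyperbolic-collar picture applies: long thin collars appear around geodesics whose lengths go to zero. These geodesics are exactly the curves $\gamma_i^k$ that get pinched, and their common model gives the cusped curve $\widehat\Sigma$ together with the maps $\phi_k$. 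This yields items (1) and (2) of the definition of convergence.

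The second step is an a priori estimate. For $J$-holomorphic maps there is a constant $\hbar>0$, depending on $(X,\omega,J)$ and on $L$ in the boundary case, with the following property: if the energy of $u$ on a disk $D_r$ is less than $\hbar$, then $\|du\|$ is bounded on $D_{r/2}$ by $C r^{-1}(\text{energy})^{1/2}$. I would prove this by the mean value inequality, since $\Delta\|du\|^2\geq -C\|du\|^4$, and in the boundary case use the Lagrangian boundary condition and reflection. Elliptic bootstrapping of $\bar\partial_J u=0$ then upgrades gradient bounds to $C^\infty$ bounds. Consequently, away from a finite set of points (at most $E/\hbar$ of them) where energy concentrates, Arzel\`a--Ascoli gives a subsequence converging in $C^\infty_{loc}$ on the thick part of the surface.

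The third step is to rescale at the concentration points. At each bubbling point, choose rescalings $z\mapsto z_k+\epsilon_k z$ with $\epsilon_k\to0$, normalized so that $\|du_k\|$ equals $1$ at $z_k$. These rescaled maps converge on $\C$ (or on the half-plane if the point is on the boundary) to a nonconstant finite-energy holomorphic map. Removable singularities, again from small-energy estimates, extend it to a sphere or disk with boundary on $L$. Each bubble carries energy at least $\hbar$, so the process of iteratively rescaling terminates. Each bubble is recorded by collapsing a small circle (or arc) around the point into a node, which is again a curve $\gamma_i$ in the cusp picture. The same analysis applies on degenerating collars.

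The hard part is the final step: showing that no energy is lost in the necks, so that item (4), $\lim\mathcal{A}(u_k)=\mathcal{A}(u_\infty)$, holds and the components connect. On a long annulus $[-T,T]\times S^1$ with small energy, I would prove exponential decay of energy toward the middle. Writing $u$ in cylindrical coordinates, the isoperimetric inequality for short loops (locally $\omega=d\lambda$) gives the differential inequality $E(t)\leq C E'(t)$. Integrating this shows the energy in the middle of the neck tends to zero and the image diameter of the neck shrinks, giving a point $z_i$. Combining this with the $C^\infty_{loc}$ convergence on the thick parts produces the cusp curve $u_\infty$ and completes the proof.
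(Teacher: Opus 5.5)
The notes do not prove this theorem. They attribute it to Gromov and only illustrate bubbling with the example $u_k(z)=(z,1/kz)$ in $S^2\times S^2$, so your outline has to stand on its own. It is the standard analytic bubbling argument: small-energy gradient estimate, rescaling at concentration points, removal of singularities, energy quantization, and decay on necks. As a roadmap it is the right one. It has one genuine gap, and a few steps need more than you give them.

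\textbf{The gap: missing hypotheses.} Your constant $\hbar$, the Arzel\`a--Ascoli step, the removal of singularities and the isoperimetric inequality all need three things the statement never assumes:
\begin{itemize}
\item $X$ compact, or at least all $u_k(\Sigma)$ contained in one compact set;
\item $L$ closed;
\item $J$ tamed by (indeed compatible with) $\omega$.
\end{itemize}
Without these the statement is false. Take $X=\C\times S^2$ with the product symplectic and complex structures, and $u_k\co S^2\to X$, $u_k(z)=(k,z)$. These maps are holomorphic with constant action. Yet no subsequence converges in the sense of items (1)--(4): at every point of $\widehat\Sigma-s(\widehat\Sigma)$, the maps $u_k\circ\phi_k^{-1}$ leave every compact set. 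You must state compactness explicitly. In the later applications in $T^*M$ and $\R^{2n}$ it has to be replaced by an a priori $C^0$ bound. With that hypothesis added, your argument is the right proof, subject to the points below.

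\textbf{Neck step.} Writing ``locally $\omega=d\lambda$'' does not suffice. You do not yet know that the image of the whole neck lies in one chart; that is part of what you are proving. The standard remedy:
\begin{itemize}
\item Use the local action $a(\gamma)$ of a short loop, i.e.\ the $\omega$-area of a small capping disk.
\item Check that $\frac{d}{dt}a(\gamma_t)$ equals the energy flux through $\{t\}\times S^1$. Then the energy of $[-t,t]\times S^1$ is exactly $a(\gamma_t)-a(\gamma_{-t})$, since both sides vanish at $t=0$.
\item Apply $|a(\gamma)|\le c\,\ell(\gamma)^2$. Shortness of the loops comes from your gradient estimate on unit cylinders.
\end{itemize}
For disk bubbles the necks are strips $[-T,T]\times[0,1]$ with sides on $L$. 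These need the relative isoperimetric inequality for arcs with endpoints on $L$.

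\textbf{Necks that keep energy.} The decay estimate applies only once a neck's energy is below the threshold. You should state the dichotomy: if a neck retains energy at least $\hbar$ in the limit, translate along the cylinder and extract a further bubble. This case really occurs with your normalization. Rescaling at the maximum of $\|du_k\|$ finds the smallest-scale bubble first, and larger bubbles at intermediate scales escape to infinity in its plane.

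\textbf{Boundary estimate.} Naive reflection across $L$ for non-integrable $J$ gives a Cauchy--Riemann equation whose coefficients are only Lipschitz across the seam. So $\Delta\|du\|^2\geq -C\|du\|^4$ is not available there. The boundary small-energy estimate is usually proved directly, using a metric for which $L$ is totally geodesic and $J(TL)$ is orthogonal to $TL$.
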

\begin{remark}
We note that Gromov's original theorem did not require a symplectic form, but merely a Riemannian metric, so that the action of the pseudo-holomorphic curves can be measured. We state this version as it is all that we will need below.
\end{remark}

To get some sense of this type of convergence, we look at an example. 
\bex
Let $\Sigma=S^2$ be the $1$-point compactification of $\C$ (with any fixed area form and almost-complex structure) and consider the maps
\[
u_k\co S^2\to S^2\times S^2\co z\mapsto (z,1/kz).
\]
We can see the graphs of these functions in Figure~\ref{fig:s2s2}.
\begin{figure}[htb]{
\begin{overpic}
{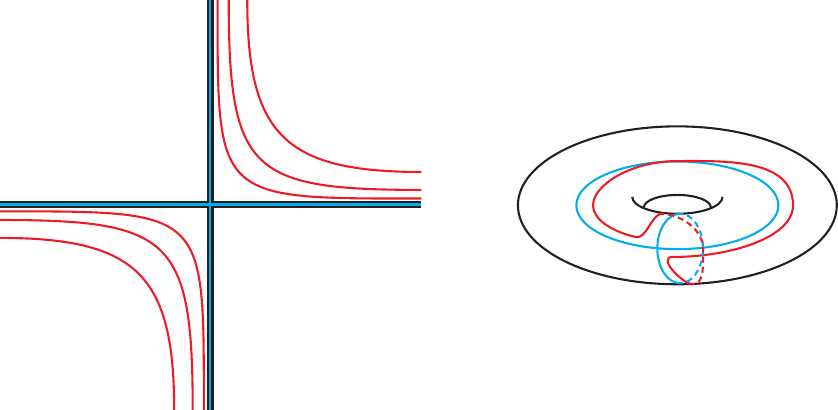}
\footnotesize
\put(135, 135){\color{red}$u_1$}
\put(125, 125){\color{red}$u_2$}
\put(115.5, 115.5){\color{red}$u_3$}
\put(140, 85){\color{lblue}$u_\infty^1$}
\put(85, 140){\color{lblue}$u_\infty^2$}
\put(385, 95){\color{red}$u_k$}
\put(280, 77){\color{lblue}$u_\infty^1$}
\put(307, 65){\color{lblue}$u_\infty^2$}
\end{overpic}}
\caption{On the left are the images of the $u_k$, shown in red, and the $u_\infty^l$, shown in blue. On the right we see the same picture where the spheres have not been punctured (though they are still, necessarily, half-dimensional, so the figure can be drawn).}
\label{fig:s2s2}
\end{figure} 
We notice that as $k\to \infty$ these maps converge to $u_\infty^1\co S^2\to S^2\times S^2:z\mapsto (z,0)$ way from $z=0$. The problem is that the gradient of the $u_k$ is blowing up at $z=0$. So to see what the $u_k$ converge to in a neighborhood of $0$ we need to reparameterize the $u_k$. This is similar to when we studied geodesics, and we were only interested in the image of the curves and not their parameterization. If we reparameterize the $u_k$ by the holomorphic map $\phi\co S^2\to S^2: z\mapsto 1/kz$ we see
\[
u_k\circ\phi\co S^2\to S^2\times S^2\co z\mapsto (1/kz, z).
\]
And these maps converge to $u_\infty^2\co S^2\to S^2\times S^2\co z\mapsto (0,z)$. So the sequence of pseudo-holomorphic curves $u_k(S^2)$ converges to the cusped curve $u_\infty:\widehat{S^2}\to S^2\times S^2$ where $\widehat{S^2}$ is a wedge of two $S^2$s and $u_\infty$ is $u_\infty^1$ on one of the spheres and $u_\infty^2$ on the other. 

We note in this simple example that we were able to find one global reparameterization to ``see'' both the limiting spheres, but in general, one might need to reparameterize each of the $u_k$ separately, and this is the role of the $\phi_k$ in the definition of convergence. 
\eex
\subsection{Defining Lagrangian Floer homology}\label{LFh}
We are now ready to define Lagrangian Floer homology. We start with a symplectic manifold $(X,\omega)$ and two Lagrangian submanifolds $L_0$ and $L_1$ of $X$. Lagrangian Floer homology will be an invariant of the pair $L_0$ and $L_1$ up to Hamiltonian isotopy of either of the Lagrangian submanifolds. 

We first note that one may slightly isotop the $L_i$ so that they are transverse to each other. The chain complex for Lagrangian Floer homology is 
\[
FC(L_0,L_1)=\text{ the free $R$-module generated by } L_0\cap L_1,
\]
where $R$ is some ring of field like $\Z$ or $\Z/2\Z$.

To define the boundary map, we will consider ``pseudo-holomorphic strips". That is we will consider the surface $\R\times [0,1]$ with coordinate $(s,t)$ and the complex structure that sends $\frac{\partial}{\partial t}$ to $\frac{\partial}{\partial s}$ and $\frac{\partial}{\partial s}$ to $-\frac{\partial}{\partial t}$. Thus, if we choose an almost-complex structure $J$ on $X$ that is compatible with $\omega$ (this is like choosing a Riemannian metric when studying Morse homology), then a map $u\co(\R\times [0,1])\to X$ is pseudo-holomorphic if and only if 
\[
\frac{\partial u}{\partial s} + J \frac{\partial u}{\partial t}=0
\]
where of course $\frac{\partial u}{\partial s}$ is the image of the vector $\frac{\partial}{\partial s}$ under $du$ and similarly for $\frac{\partial u}{\partial t}$.
\bhw
Prove the statement characterizing when $u$ is pseudo-holomorphic. 
\ehw
We can also consider a family of almost complex structures $J_t$ compatible with $\omega$ and consider pseudo-holomorphic strips $u\co (\R\times[0,1])\to X$ that satisfy 
\[
\frac{\partial u}{\partial s} + J_t \frac{\partial u}{\partial t}=0.
\]
This extra flexibility in the definition of a pseudo-holomorphic strip can be helpful in proving various technical results.

Now given two point $x,y\in L_0\cap L_1$ we consider the space
\begin{align*}
\mathcal{M}^x_y=\{ u\co (\R\times [0,1])&\to X \text{ such that}\\
&\text{1) $u(s,i)\in L_i $ for $i=0,1$},\\
&\text{2) $\lim_{s\to -\infty}u(s,t)=x$},\\
&\text{3) $\lim_{s\to \infty}u(s,t)=y$}, \text{ and}\\
&\text{4)  $u$ is pseudo-holomorphic}
\}/\R.
\end{align*}
To understand the $\R$ action above we note that $T_{s_0}\co (\R\times [0,1])\to (\R\times [0,1])\co (s,t)\mapsto (s+s_0,t)$ is a holomorphic map so given any pseudo-holomorphic $u\co (\R\times[0,1])\to X$ the map $u\circ T_{s_0}$ is also pseudo-holomorphic and hence $\R$ acts on the set of such psedudo-holomorphic maps. 

It will be important for us to keep track of the homotopy class of an element $u\in \mathcal{M}^x_y$. To this end we define the set
\begin{align*}
\pi_2(x,y)=\{ \text{homotopy classes of maps }&  u\co (\R\times [0,1])\to X \text{ such that}\\
&\text{1) $u(s,i)\in L_i $ for $i=0,1$},\\
&\text{2) $\lim_{s\to -\infty}u(s,t)=x$}, \text{and}\\
&\text{3) $\lim_{s\to \infty}u(s,t)=y$}
\}
\end{align*}
For each $h\in \pi_2(x,y)$ we can now consider the spaces
\[
\mathcal{M}^x_y(h)=\{u\in \mathcal{M}^x_y \text{ such that } u\in h\}
\]
and notice that
\[
\mathcal{M}^x_y=\bigcup_{h\in \pi_2(x,y)} \mathcal{M}^x_y(h).
\]

Before we can state the main theorems to construct our boundary map, we need to consider an ``index" for $x$ and $y$. In Morse theory, we could assign an index to each critical point, and we could then determine the dimension of the moduli space of flowlines in terms of these indices. Unfortunately, we do not have a well-defined index for each $x\in L_0\cap L_1$, but we do have a ``relative index" for a pair of points $x,y\in L_0\cap L_1$, and this will be sufficient to define the boundary map for our chain complex. This relative index is provided by the Maslov index or Maslov class. 
\subsubsection{The Maslov class}\label{maslovsection}
We begin by considering $\R^{2n}=\C^n$ with its standard symplectic structure $\omega_{std}$ defined in Example~\ref{stdex}. We can now let $\mathcal{L}(\C^n)$ be the space of linear Lagrangian subspaces in $\C^n$. It is not too hard to show
\[
\pi_1(\mathcal{L}(\C^n))\cong \Z
\]
and there is a ``preferred" generator, see \cite{RobbinSalamon1993}. 
\bhw
Prove this for when $n=1$. \\ Hint: Notice that $\mathcal{L}(\C)$ is simply $\R P^2$ and a natural generator for its fundamental group is the loop that starts with the $x$-axis and rotates through an angle of $\pi$. 
\ehw
Now consider two linear Lagrangian subspaces $\Lambda_0$ and $\Lambda_1$ in $\C^n$ that intersect transversely. 
\bhw
Show that there is a complex linear map $I\co \C^n\to \C^n$ such that $I(\Lambda_1)=\Lambda_0$.
\ehw
Using $I$ from the exercise, we can define a map a path in $\mathcal{L}(\C^n)$ from $\Lambda_1$ to $\Lambda_0$ given by
\[
\gamma_{\Lambda_0,\Lambda_1}(t)= e^{tI} \Lambda_1= \left( (\cos t)\text{ id}_{\C^n} + (\sin t) I\right) \Lambda_1,
\]
as $t$ goes from $0$ to $\pi/2$.
See Figure~\ref{fig:crotate}.
\begin{figure}[htb]{
\begin{overpic}
{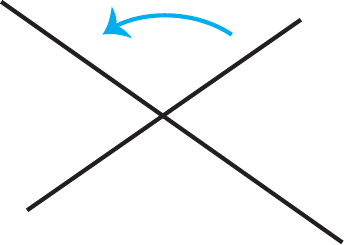}
\put(15, 85){$\Lambda_0$}
\put(135, 85){$\Lambda_1$}
\end{overpic}}
\caption{Two Lagrangian subspaces of $\C$. The map $I$ is indicated by the arrow. The path $\gamma_{\Lambda_0,\Lambda_1}$ rotates $\Lambda_1$ anticlockwise to $\Lambda_0$.}
\label{fig:crotate}
\end{figure}

Now suppose that $u\co \R\times [0,1]\to X$ is a representative of an element in $\pi_2(x,y)$. Consider 
\[
u^*TX \cong (\R\times[0,1])\times \C^n.
\]
Thus, if we have a linear Lagrangian subspace in $T_xX$ for some $x$ in the image of $u$, then we can use the trivialization above to think of it as a Lagrangian subspace of $\C^n$, that is, as an element of $\mathcal{L}(\C^n)$. Thus restricting $u$ to $\R\times\{i\}$, for $i=0,1$, will give paths $\gamma_i(t)=T_{u(t,i)}L_i$ in $\mathcal{L}(\C^n)$. We can think of extending the paths to $[-\infty, \infty]$ by continuity. So $\gamma_i(\infty)$ will be $T_yL_i$ and $\gamma_i(-\infty)$ will be $T_xL_i$. We can finally define the path
\[
\gamma_u=\gamma_0*\gamma_{\gamma_1(\infty),\gamma_0(\infty)}* \overline{\gamma_1}*\overline{\gamma_{\gamma_0(-\infty),\gamma_1(-\infty)}}
\]
by concatenating the paths, where $*$ denotes concatenation and $\overline{\gamma}$ is the path $\gamma$ run backwards. See Figure~\ref{fig:maslovex}. 
\begin{figure}[htb]{
\begin{overpic}
{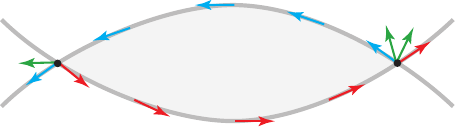}
\put(225, 48){$\Lambda_0$}
\put(225, 8){$\Lambda_1$}
\put(25, 37){$x$}
\put(188, 17){$y$}
\end{overpic}}
\caption{The gray is the image of a pseudo-holomorphic curve in $\C$ with boundary on $\Lambda_0$ and $\Lambda_1$. We drew red and blue arrows whose span gives the tangent space to the $\Lambda_i$. The green arrows give the interpolation between the end of the red and blue arrows given by the paths $\gamma_{\gamma_1(\infty),\gamma_0(\infty)}$ and $\overline{\gamma_{\gamma_0(-\infty),\gamma_1(-\infty)}}$. (Note that the first path rotates anticlockwise a bit more than $\pi/2$ while the second path rotates clockwise a bit less than $\pi/2$.) This gives a loop in $\mathcal{L}(\C)$, which we know to be $\R P^2$ from the exercise above. The Masolv index of this disk is $1$. }
\label{fig:maslovex}
\end{figure} 
It is clear that $\gamma_u$ is a loop in $\mathcal{L}(\C^n)$ and so we define the \dfn{Maslov index} of $u$ to be the integer associated to the homotopy class of this loop
\[
\mu(u)\in \Z\cong \pi_1(\mathcal{L}(\C^n))
\]

We are now ready to state the first main theorem.
\begin{tcolorbox}[title={Floer Theorem 1 (Transversality)}]
Suppose $L_0$ and $L_1$ are Lagrangian submanifolds of $(X,\omega)$ that intersect transversely. 
For a generic choice of almost complex structure $J$ on compatible with $\omega$ the space 
\[
\mathcal{M}^x_y(h)
\]
is a manifold of dimension $\mu(u)$ for some $u$ the class $h\in \pi_2(x,y)$, were $x$ and $y$ are intersection points of $L_0$ and $L_1$.
\end{tcolorbox}

One can also orient these moduli spaces. We note in the satement we need the Lagrangian submanifolds to be spin. {Spin is a mild condition on a manifold that is not relevant for our main discussion, so we do not discuss it further.} 
\begin{tcolorbox}[title={Floer Theorem 2 (Orientability)}]
Suppose $L_0$ and $L_1$ are spin Lagrangian submanifolds of $(X,\omega)$ that intersect transversely. 
There is a natural way to orient the moduli spaces
\[
\mathcal{M}^x_y(h).
\]
\end{tcolorbox}

\subsubsection{Compactness and gluing}

We have discussed compactness for pseudo-holomorphic curves in almost complex manifolds in Section~\ref{Gc}, but we now consider the specific pseudo-holomorphic curves we will need in the definition of Floer homology. 
\begin{tcolorbox}[title={Floer Theorem 3 (Compactness)}]
A sequence of pseudo-holomorphic curves $(u_n,j_n)$ in $\mathcal{M}^x_y(h)$ that has $\mathcal{A}(u_n)$ bounded will have a subsequence that converges to either a pseudo-holomorphic curve in $\mathcal{M}^x_y(h)$ or a cusped pseudo-holomorphic curve as indicated in Figure~\ref{fig:stripcusp}. 
\end{tcolorbox}
\begin{figure}[htb]{
\begin{overpic}
{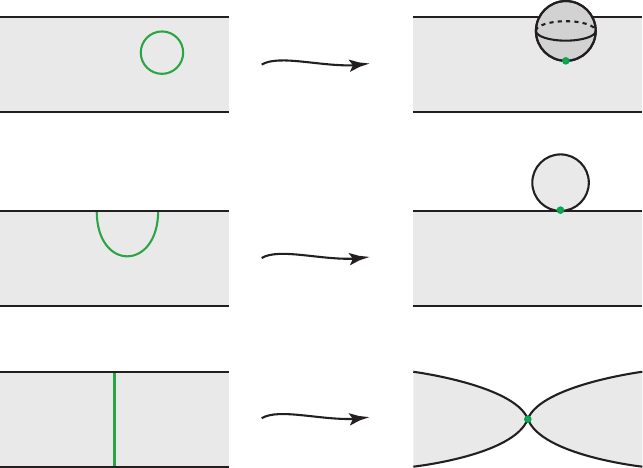}
\footnotesize
\put(-20, 192){(1)}
\put(-20, 98){(2)}
\put(-20, 22){(3)}
\end{overpic}}
\caption{Cusped curves to which a pseudo-holomorphic strip can converge. In the first example, a sphere ``bubbles off'' in the limit. In the second example, a disk ``bubbles off'' in the limit. In the last case, the pseudo-holomorphic strip limits to a ``broken strip''. }
\label{fig:stripcusp}
\end{figure} 

Finally, we have a gluing theorem. We state a vague version of the theorem as the full theorem can be a bit technical. 
\begin{tcolorbox}[title={Floer Theorem 4 (Gluing)}]
Any cusped pseudo-holomorphic strip (that is, anything that can appear in the limit in Floer Theorem 3) can be ``glued'' to find a family of pseudo-holomorphic strips that converge to it. 
\end{tcolorbox}

We are now ready to return to the definition of Floer homology. Recall from the beginning of this section that given two Lagrangian submanifolds $L_0$ and $L_1$ of a symplectic manifold $(X,\omega)$ (after isotoping them so they are transverse to each other), we defined the chain complex for Lagrangian Floer homology to be
\[
FC(L_0,L_1)=\text{ the free $R$ module generated by } L_0\cap L_1.
\]
We now define the chain map
\[
\partial\co FC(L_0,L_1) \to FC(L_0,L_1)
\]
on an element $p_+\in L_0\cap L_1$ by
\[
\partial p_+=\sum_{p_-\in L_0\cap L_1}\left( \sum_{h\in \pi_2(p_+,p_-), \mu(h)=1} |\mathcal{M}^{p_+}_{p_-}(h)/\R| \, p_-\right).
\]
Recall that $\mathcal{M}^{p_+}_{p_-}(h)$ has an $\R$ actions so $\mathcal{M}^{p_+}_{p_-}(h)/\R$ is just its quotient by this action and $|\mathcal{M}^{p_+}_{p_-}(h)/\R|$ is the number of points in the $0$-dimesnional manifold $\mathcal{M}^{p_+}_{p_-}(h)/\R$.
\begin{remark}
For this sum to be finite, we need 
\begin{equation}\tag{$*$}
\{\mathcal{A}(\phi) : [\phi]\in \pi_2(p_+,p_1)\}<\infty
\end{equation}
This will happen if $|\pi_2(p_+,p_-)|$ is finite. 
\end{remark}
We now see when $\partial$ is a differential.
\begin{lemma}
The map $\partial$ above is well-defined if $(*)$ is holds. Moreover, if the limits (1) and (2) in Figure~\ref{fig:stripcusp} do not occur, then $\partial^2=0$.
\end{lemma}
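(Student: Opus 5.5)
Two things need proving: that $\partial$ is well-defined, and that $\partial^2=0$. The plan is to copy the Morse argument of Theorem~\ref{d2}, using Floer Theorems~1--4 in place of Morse Theorems~1--4.

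\emph{Well-definedness.} Fix $p_+$ and $p_-$ and a class $h\in\pi_2(p_+,p_-)$ with $\mu(h)=1$. By Floer Theorem~1, $\mathcal{M}^{p_+}_{p_-}(h)$ is a $1$-manifold. The $\R$-action by translation is free on non-constant strips, so $\mathcal{M}^{p_+}_{p_-}(h)/\R$ is a $0$-manifold; over $\Z$ it is oriented by Floer Theorem~2.

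Compactness comes next. For a pseudo-holomorphic strip the action equals $\int u^*\omega$ (by the earlier exercise), and this depends only on the homotopy class $h$. So along any sequence in $\mathcal{M}^{p_+}_{p_-}(h)$ the action is constant, hence bounded, and Floer Theorem~3 applies.

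I then rule out every nontrivial limit in Figure~\ref{fig:stripcusp}. Sphere and disk bubbles are excluded by hypothesis. A broken strip would split $h$ into pieces $h_1\# h_2$ with $\mu(h_1)+\mu(h_2)=1$. Each piece is a non-constant strip modulo $\R$, so by transversality it must have $\mu\ge 1$, which is impossible. Hence each quotient is a compact $0$-manifold, that is, finitely many signed points.

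Finiteness of the outer sum over $h$ is exactly what $(*)$ provides: only finitely many classes carry holomorphic representatives, or at least only finitely many with $\mu=1$ are nonempty. Gromov compactness (Theorem~\ref{gct}) together with the action bound then shows that only finitely many classes contribute.

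\emph{$\partial^2=0$.} Expanding as in Theorem~\ref{d2}, the coefficient of $q$ in $\partial^2 p_+$ is
\[
\sum_{r\in L_0\cap L_1}\ \sum_{\mu(h_1)=\mu(h_2)=1} |\mathcal{M}^{p_+}_{r}(h_1)/\R|\,|\mathcal{M}^{r}_{q}(h_2)/\R|.
\]
I group these terms by the glued class $h=h_1\#h_2\in\pi_2(p_+,q)$, which has $\mu(h)=2$, so $\mathcal{M}^{p_+}_q(h)/\R$ is a $1$-manifold.

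By Floer Theorem~3, with the bubbles in (1) and (2) excluded by hypothesis, the only noncompact ends of this $1$-manifold are once-broken strips. Twice-broken strips would force a piece with $\mu\le 0$ and are excluded by the same dimension count as before. By Floer Theorem~4, every such once-broken strip is the limit of exactly one end, with consistent orientation. So the compactification $\overline{\mathcal{M}^{p_+}_q(h)/\R}$ is a compact $1$-manifold whose boundary is
\[
\bigcup_{r}\ \bigcup_{h_1\#h_2=h}\left(\mathcal{M}^{p_+}_r(h_1)/\R\times\mathcal{M}^r_q(h_2)/\R\right).
\]
The signed count of the boundary of a compact $1$-manifold is zero. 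Summing over the finitely many relevant $h$ (again using $(*)$) then gives $\partial^2=0$.

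The main obstacle is the compactness bookkeeping. I need to know that, once bubbling is excluded, Floer Theorem~3 yields only broken strips with additive Maslov index. I also need a uniform action bound, so that only finitely many classes $h$ and finitely many boundary points occur.
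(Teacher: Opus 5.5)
Your proposal is correct and follows the paper's route. Both arguments use Gromov compactness with bubbles (1) and (2) excluded by hypothesis. Broken limits of index-one strips are ruled out by additivity of $\mu$ together with the free $\R$-action, and $\partial^2=0$ is read off from the boundary of the compactified one-dimensional moduli spaces exactly as in Theorem~\ref{d2}.

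Your bookkeeping is somewhat sharper than the paper's. You note that the action is constant on a fixed class $h$, so compactness of each $\mathcal{M}^{p_+}_{p_-}(h)/\R$ needs no extra hypothesis, and $(*)$ is used only to make the sum over $h$ finite. You also group terms by the glued class and explicitly exclude multiply broken strips. These refine the argument rather than change it.
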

\begin{proof}
To see that the map is well-defined, we need to see that $|\mathcal{M}^{p_+}_{p_-}(h)/\R|$ is compact (it is already a $0$-manifold, so it will then be finite). Since we are assuming $(*)$ we can use the Gromov Compactness Theorem, Theorem~\ref{gct}, to see that a sequence of pseudo-holomorphic strips in $\mathcal{M}^{p_+}_{p_-}(h)$ that does not converge to a pseudo-holomorphic strip must converge to a cusped strip as in Figure~\ref{fig:stripcusp}. But we are assuming that the limits (1) and (2) do not occur, so the only limit we can have is (3). That is, our sequence $u_n$ converges to a broken strip. That is it converges to the union of two strips, say $u_\infty^1$ and $u_\infty^2$. 
\bhw
Show that
\[
\mu(u_n)=\mu(u_\infty^1)+\mu(u_\infty^2).
\]
\ehw
Since we know that $\mu(u_n)=1$, we see that one of the $u_\infty^i$ must be in a $0$-dimensional moduli space. But we know any pseudo-holomorphic strip is in a moduli space with an $\R$-action, so the space cannot have dimension $0$. Thus, our sequence cannot limit to (3) and must limit to another pseudo-holomorphic strip. In particular, $\mathcal{M}^{p_+}_{p_-}(h)$ is compact.


The proof that $\partial^2=0$ is analogous to the proof given in Theorem~\ref{d2} for $\partial^2=0$ in Morse homology. Specifically, see Figure~\ref{fig:picproof2}.
\begin{figure}[htb]{
\begin{overpic}
{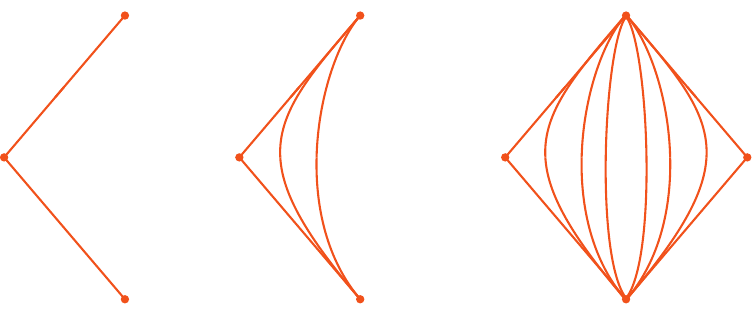}
\put(58, 151){$p_+$}
\put(-7, 75){$r$}
\put(58, -3){$p_-$}
\put(171, 151){$p_+$}
\put(105, 75){$x$}
\put(171, -3){$p_-$}
\put(298, 151){$p_+$}
\put(234, 75){$x$}
\put(298, -3){$p_-$}
\put(364, 75){$y$}
\end{overpic}}
\caption{The proof that $\partial^2=0$.}
\label{fig:picproof2}
\end{figure} 
There we see that if $p_-$ is in $\partial^2 p_+$ (that is $p_-$ is in $\partial x$ where $x$ is in $\partial p_+$), then the gluing theorem says there are nearby pseudo-holomorphic strips from $p_+$ to $p_-$ that converge to the strip from $p_+$ to $x$ and from $x$ to $p_-$. Now, since limits (1) and (2) are not allowed, the only compactification of this moduli space is by another broken strip through some double point $y$. 
\end{proof}
\begin{remark}
Notice that if the limit (2) in Figure~\ref{fig:stripcusp} could occur, then the last diagram in Figure~\ref{fig:picproof2} could look like Figure~\ref{fig:d2not0}. 
\begin{figure}[htb]{
\begin{overpic}
{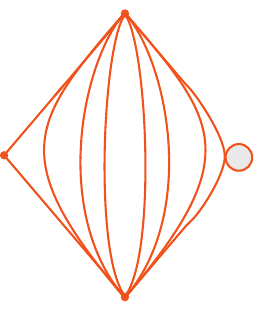}
\put(58, 151){$p_+$}
\put(-8, 75){$x$}
\put(58, -3){$p_-$}
\end{overpic}}
\caption{How $\partial^2=0$ can go wrong if one has undesirable limits. The limit on the right is supposed to represent a disk bubling off.}
\label{fig:d2not0}
\end{figure} 
In that case, $p_-$ might only show up in $\partial^2 p_+$ an odd number of times, and hence $\partial^2 p_+\not=0$. 
\end{remark}

Given Lagrangian submanifolds $L_0$ and $L_1$ satisfying $(*)$ and for which we do not have limits (1) and (2) in Figure~\ref{fig:stripcusp}, we can define the \dfn{Floer homology} of $L_0$ and $L_1$ to be 
\[
FH(L_0,L_1)=\frac{\ker (\partial \co FC(L_0,L_1)\to FC(L_0,L_1)))}{\img (\partial \co FC(L_0,L_1)\to FC(L_0,L_1))}
\]
\begin{remark}
We note that one can define a relative grading on $FH(L_0, L_1)$ using the Maslov class, but this is a little involved (it might be only well-defined modulo some integer), so we do not address this here. 
\end{remark}

Recall, earlier we discussed that given a function $H\co X\to \R$, there is a Hamiltonian vector field $v_H$ defined by $\iota_{v_H}\omega=dH$. Any time $t$ flow of $v_H$ is called a \dfn{Hamiltonian diffeomorphism}. 
\begin{theorem}
Given a symplectic manifold $(X,\omega)$ and Lagrangian submanifolds $L_0$ and $L_1$ such that the limits (1) and (2) in Floer Theorem~3 do not occur. If $(*)$ holds, then $FH(L_0,L_1)$ is independent of the almost complex structures $J_t$ chosen and 
\[
FH(\phi(L_0), \psi(L_1))=FH(L_0, L_1)
\]
for any Hamiltonian diffeomorphisms $\phi$ and $\psi$. 
\end{theorem}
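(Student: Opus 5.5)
The plan follows the Morse-homology invariance argument of the previous section, adapted to strips. Handling the Hamiltonian diffeomorphisms reduces to changing almost complex structures: if $\phi$ is a Hamiltonian diffeomorphism, then $\phi$ is a symplectomorphism, so $u\mapsto \phi\circ u$ is a bijection between $J_t$-holomorphic strips with boundary on $(L_0,L_1)$ and $\phi_*J_t$-holomorphic strips with boundary on $(\phi(L_0),\phi(L_1))$. It preserves Maslov indices and action. Hence $FH(\phi(L_0),\phi(L_1))$ computed with $\phi_*J_t$ equals $FH(L_0,L_1)$ computed with $J_t$. Replacing $(\phi,\psi)$ by $(\mathrm{id},\phi^{-1}\circ\psi)$ (a composition of Hamiltonian diffeomorphisms is Hamiltonian), it remains to prove two things: independence of $J_t$, and invariance under moving only $L_1$ by a Hamiltonian isotopy $\psi_\tau$, $\tau\in[0,1]$.

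For both, I will build continuation maps. Choose an $s$-dependent family of data $(J_{s,t})$ that equals the first choice for $s\le 0$ and the second for $s\ge 1$, exactly as $\Gamma$ was chosen in Morse homology. For the Hamiltonian move, use moving boundary conditions $u(s,1)\in \psi_{\beta(s)}(L_1)$ with a cutoff $\beta$; equivalently, after pulling back by $\psi_{\beta(s)}$, consider the perturbed equation $\partial_s u + J_{s,t}(\partial_t u - \beta'(s)\,v_{H})=0$. Let $\widetilde{\mathcal{M}}^x_y(h)$ be the space of such strips. There is no $\R$-action, so the analogs of Floer Theorems 1--4 give a manifold of dimension $\mu(h)$. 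Define
\[
\Phi(x)=\sum_y\sum_{\mu(h)=0}|\widetilde{\mathcal{M}}^x_y(h)|\,y .
\]
The compactified $1$-dimensional spaces $\overline{\widetilde{\mathcal{M}}^x_y(h)}$ with $\mu(h)=1$ have boundary consisting of broken strips: an $\R$-invariant strip for the old data glued to a continuation strip, or the reverse. Counting boundary points, as in Theorem~\ref{tm26}, gives $\Phi\circ\partial_0 \pm \partial_1\circ\Phi=0$. A $1$-parameter family of homotopies $D$ gives spaces $\widehat{\mathcal{M}}$ of dimension $\mu+1$ and a chain homotopy $K$ between the continuation maps of two homotopies. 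The two exercises then carry over: the constant homotopy induces the identity, and concatenation induces composition (by gluing along a long neck). Together these show that $\Phi$ is an isomorphism on homology, with inverse the map of the reversed homotopy.

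The main obstacle is compactness for the continuation strips. Two issues arise. First, the energy bound $(*)$ must hold for the perturbed equation: the energy now equals a topological term plus a term bounded by $\int|\beta'|\,\|H\|_\infty$, so it is still bounded. I would verify this first via the energy identity, as in the earlier exercise bounding $\mathcal{A}(u)$ by $\int u^*\omega$. Second, I must ensure that the bubbling limits (1) and (2) of Figure~\ref{fig:stripcusp} do not occur along the whole path of data, not just at the endpoints. The hypothesis as stated refers only to the endpoints, so I would interpret it as holding uniformly along the isotopy. That is the natural reading, and in practice it follows from conditions such as exactness or monotonicity that are preserved under Hamiltonian isotopy. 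With that in hand, the only remaining limits are broken strips with index additivity $\mu(u_n)=\sum\mu(u^i_\infty)$, and the boundary description above follows by the same argument as in the proof of the lemma.
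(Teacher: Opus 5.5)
Your proposal is correct and follows the paper's route: the paper's entire proof is the remark that the Morse continuation argument carries over, and your moving-boundary continuation maps, the chain homotopy $K$, and the identity and composition steps are that argument spelled out, including your caveat that the no-bubbling condition and $(*)$ must hold along the whole path of data, which the statement leaves implicit. One slip in your aside: the Hamiltonian-perturbed form of the continuation equation should carry $\beta(s)\,v_H$, not $\beta'(s)\,v_H$, and the pullback must be by a $t$-dependent isotopy such as $\psi_{t\beta(s)}$ so that $L_0$ stays fixed; with $\beta'$ the perturbation vanishes at both ends, so both asymptotics lie in $L_0\cap L_1$ and you get a map $FC(L_0,L_1)\to FC(L_0,L_1)$ rather than a continuation to the moved pair.
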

The proof of this theorem is similar to the proof that Morse homology is independent of the chosen metric and Morse function. 

\subsection{Applications}
In this section, we will consider a simple situation when Floer homology is well-defined and a surprising construction that allows one to define an invariant of $3$-manifolds using Lagrangian Floer homology. 

\subsubsection{Exact Lagrangians submanifolds}
Let $(X,\omega)$ be an exact symplectic manifold, meaning that there is a $1$-form $\lambda$ such that $d\lambda=\omega$. 

Suppose that $\Sigma$ is a closed pseudo-holomorphic surface in $X$. Then $\omega(T_x\Sigma)>0$ for all $x\in \Sigma$ (since, for any $v\in T_x\Sigma$ we know $v$ and $Jv$ span $T_x\Sigma$ and $\omega(v, Jv)>0$ since $J$ is tamed by $\omega$). Thus, we see that 
\[
\mathcal{A}(\Sigma)=\int_\Sigma \omega>0,
\] 
but
\[
\int_\Sigma \omega=\int_\Sigma d\lambda=\int_{\partial \Sigma} \lambda=0.
\]
This contradiction indicates that there are no closed pseudo-holomorphic surfaces in $X$ and thus limit (1) in Floer Theorem~3 cannot happen. 

Now suppose that $L$ is an \dfn{exact Lagrangian submanifold} of $X$. This means that there is some function $f\co L\to \R$ such that $\lambda|_L=df$. Given a holomorphic sufacse $\Sigma$ with boundary on $L$, we note as above that 
\[
\int_\Sigma \omega>0,
\]
but 
\[
\int_\Sigma \omega=\int_{\partial \Sigma} \lambda=\int_{\partial \Sigma} df= \int_{\partial (\partial \Sigma)} f=0.
\]
This contradiction shows that limit (2) in Floer Theorem~3 cannot happen. 

Finally, if $L_0$ and $L_1$ are two exact Lagrangian submanifolds of $X$, with corresponding functions $f_0$ and $f_1$, and $u\co (\R\times [0,1])\to X$ is a pseudo-holomorphic strip with boundary on $L_0$ and $L_1$ from the intersection points $p_+$ to $p_-$, then 
\begin{align*}
\mathcal{A}(u)&=\int_{\R\times [0,1]}u^*\omega=\int_{\partial(\R\times [0,1])} u^*\lambda=\int_{-\infty}^\infty u^*f_1 -\int_{-\infty}^\infty u^*f_0\\
&=f_1(p_+)-f_1(p_-)-f_0(p_+)+f_0(p_-)
\end{align*}
Thus, $\mathcal{A}$ is bounded on $\pi_2(p_+,p_-)$ and so $(*)$ is true. 

From the above, we see the following result holds.
\begin{theorem}
Floer homology is well-defined for exact Lagrangian submanifolds in an exact symplectic manifold. 
\end{theorem}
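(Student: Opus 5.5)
The plan is to show that the three hypotheses of the preceding lemma and of the invariance theorem hold for exact $L_0, L_1$ in exact $(X,\omega=d\lambda)$. Those hypotheses are: $(*)$, and the absence of the limits (1) and (2) in Floer Theorem~3. Once they are in place, the lemma gives that $\partial$ is well-defined with $\partial^2=0$, and the invariance theorem gives independence of $J_t$ and of Hamiltonian isotopies. Each hypothesis will be verified with Stokes' theorem, using positivity of $\omega$ on pseudo-holomorphic curves, which holds because $J$ is tamed by $\omega$. By the exercise relating $\mathcal{A}(u)$ to $\int u^*\omega$, this lets us identify the action of a pseudo-holomorphic map with $\int u^*\omega$.

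First, sphere bubbles (limit (1)). Let $u\co S^2\to X$ be nonconstant and pseudo-holomorphic. At any point where $du\neq 0$ we have $u^*\omega(v,jv)=\omega(du\,v,J\,du\,v)>0$, and $u^*\omega\geq 0$ everywhere, so $\int_{S^2}u^*\omega>0$. On the other hand, $\int_{S^2}u^*d\lambda=\int_{S^2}d(u^*\lambda)=0$ by Stokes' theorem, since $S^2$ has no boundary. This is a contradiction, so no sphere bubbles occur.

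Second, disk bubbles (limit (2)). Let $u\co (D^2,\partial D^2)\to (X,L_i)$ be nonconstant and pseudo-holomorphic, and write $\lambda|_{L_i}=df_i$. Then
\[
0<\int_{D^2}u^*\omega=\int_{\partial D^2}u^*\lambda=\int_{\partial D^2}d(f_i\circ u)=0,
\]
again a contradiction. Since every boundary component of the bubble maps to the single Lagrangian $L_i$, this step is clean.

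Third, condition $(*)$. For a strip $u$ representing a class in $\pi_2(p_+,p_-)$, apply Stokes' theorem on $[-T,T]\times[0,1]$ and let $T\to\infty$. The vertical edges contribute $\int_{\{\pm T\}\times[0,1]}u^*\lambda$, which tends to $0$ because $u$ converges to the constants $p_\pm$ there. The horizontal edges give the telescoping sums of $f_0$ and $f_1$. The result is
\[
\int u^*\omega = \bigl(f_1(p_+)-f_1(p_-)\bigr)-\bigl(f_0(p_+)-f_0(p_-)\bigr),
\]
with signs fixed by the boundary orientation. This quantity depends only on the endpoints, not on the class $h$, so the actions of all strips from $p_+$ to $p_-$ are uniformly bounded. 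That gives $(*)$ and the action bound needed in Gromov compactness, Theorem~\ref{gct}.

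The main obstacle is this third step. One must justify the limiting Stokes argument, which requires that $u$ converge to $p_\pm$ fast enough, in practice exponentially near transverse intersection points, for the end contributions to vanish. One must also check that the same bound applies to the strips of arbitrary homotopy class counted in $\partial$, so that the sum defining $\partial p_+$ is finite. For the latter I would invoke Gromov compactness together with the index argument from the lemma's proof. Finally, exactness is preserved under Hamiltonian isotopy, since $\phi^*\lambda-\lambda$ is exact on $L$, so invariance follows from the theorem stated before this one.
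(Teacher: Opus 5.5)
Your proposal is correct and follows the paper's own argument. Stokes' theorem with $\omega=d\lambda$ rules out sphere bubbles, exactness $\lambda|_{L_i}=df_i$ rules out disk bubbles, and the endpoint formula $f_1(p_+)-f_1(p_-)-f_0(p_+)+f_0(p_-)$ for the action of a strip gives $(*)$; the lemma and the invariance theorem then apply. Your extra remarks fill in details the paper leaves implicit: the asymptotic decay needed for the limiting Stokes argument, finiteness of the sum over classes $h$ via compactness, and the fact that Hamiltonian isotopy preserves exactness.
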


Now consider a Morse function $f\co M\to \R$ on a manifold $M$. Let $\pi\co T^*M\to M$ be the projection map from the cotangent bundle of $M$ to $M$. If we set $F=f\circ \pi\co T^*M\to M$ we can let $v_F$ be the Hamiltonian vector field associated to $v_F$ on $T^*M$ using the canonical symplectic structure on $T^*M$ from Examples~\ref{canonicalcotangent}. 

 Now let $L$ be the image of the zero section $Z$ under the time $1$ flow of $v_F$. It is easy to see that $Z$ and $L$ are exact Lagrangian submanifolds of the exact symplectic manifold $T^*M$. Thus, Floer homology is well defined. Floer \cite{Floer1989} showed that rigid flow lines of the gradient of $f$ are in correspondence with pseudo-holomorphic strips in the Floer complex of $L$ and $Z$. From this, one can show that 
 the Floer homology of $Z$ and $L$ is isomorphic to the Morse homology of $M$:
  \[
 FH(Z,L)\cong H(M;\Z).
 \]
 This leads to the following fundamental result.
 \begin{theorem}
 If $\phi$ is a Hamiltonian diffeomorphism of $T^*M$ then 
 \[
 |(\phi(Z)\cap Z)|\geq \sum_{i=0}^n \dim(H_i(M;\Z))
 \]
 where $n$ is the dimension of $M$. 
 \end{theorem}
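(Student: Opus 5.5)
The plan is to compare the rank of the Floer chain complex $FC(Z,\phi(Z))$ with the rank of its homology, and to identify that homology with $H(M)$ using the invariance theorem and Floer's computation $FH(Z,L)\cong H(M;\Z)$. I read the statement with the implicit genericity hypothesis that $\phi(Z)$ is transverse to $Z$, so that $\phi(Z)\cap Z$ is a finite set. This is exactly the setting in which $FC$ was defined. I read $\dim H_i(M;\Z)$ as the rank of $H_i(M;\Z)$, which equals $\dim_{\mathbb{Q}} H_i(M;\mathbb{Q})$.

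\emph{Step 1: $FH(Z,\phi(Z))$ is defined.} The zero section $Z$ is exact, since $\lambda_{can}|_Z=0$. I would check that $\phi(Z)$ is also exact. Write $\phi=\phi_1$ as the time-one map of a (possibly time-dependent) Hamiltonian flow $\phi_t$ with generator $v_{H_t}$. By Cartan's formula,
\[
\frac{d}{dt}\phi_t^*\lambda_{can}=\phi_t^*\bigl(\iota_{v_{H_t}}d\lambda_{can}+d\iota_{v_{H_t}}\lambda_{can}\bigr)=\phi_t^*d\bigl(\lambda_{can}(v_{H_t})\mp H_t\bigr),
\]
which is exact. Integrating in $t$ and restricting to $Z$ shows that $\phi^*\lambda_{can}|_Z$ is exact, so $\phi(Z)$ is an exact Lagrangian. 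The discussion of exact Lagrangians in an exact symplectic manifold then rules out limits (1) and (2) of Floer Theorem~3 and verifies $(*)$. To avoid orientation and spin issues I would work over a field $\mathbb{F}$ (for instance $\mathbb{Q}$), using the spin structure of the Lagrangians and Floer Theorem~2 when $\mathbb{F}=\mathbb{Q}$. At this step I would also have to address the non-compactness of $T^*M$: Gromov compactness needs the strips to stay in a compact region. I would assume $\phi$ is compactly supported (or the flow is suitably controlled), and use a convexity or maximum-principle argument near infinity for a $J$ of contact type.

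\emph{Step 2: identify the homology.} Take $L$ to be the time-one image of $Z$ under $v_F$ with $F=f\circ\pi$. Then $L$ and $\phi(Z)$ are Hamiltonian isotopic, via $\phi\circ\phi_F^{-1}$. By the invariance theorem for Floer homology under Hamiltonian diffeomorphisms,
\[
FH(Z,\phi(Z))\cong FH(Z,L)\cong H(M;\mathbb{F}),
\]
where the second isomorphism is Floer's correspondence quoted just above the statement.

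\emph{Step 3: linear algebra.} $FC(Z,\phi(Z))$ is a finite-dimensional $\mathbb{F}$-vector space of dimension $|\phi(Z)\cap Z|$. The homology of a finite-dimensional complex is a subquotient of it, so its dimension is at most the dimension of the complex. Therefore
\[
|\phi(Z)\cap Z|\ \geq\ \dim_{\mathbb{F}} FH(Z,\phi(Z))=\sum_{i=0}^n \dim_{\mathbb{F}} H_i(M;\mathbb{F}).
\]
Taking $\mathbb{F}=\mathbb{Q}$ gives the sum of ranks of $H_i(M;\Z)$.

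I expect the main obstacle to be Step 1 rather than Steps 2 and 3, which are formal given the quoted results. The difficulty is justifying that the invariance theorem applies on the non-compact manifold $T^*M$. This requires a priori $C^0$ bounds on the pseudo-holomorphic strips, including those in the continuation moduli spaces used for invariance, so that Gromov compactness applies. My first approach would be to choose $J$ cylindrical at infinity and $\phi$ compactly supported, and then apply the maximum principle to $|p|^2\circ u$.
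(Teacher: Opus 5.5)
Your proposal is correct and follows the paper's argument. The paper notes that $FC$ has rank $|\phi(Z)\cap Z|$, that the homology of a complex has rank at most that of the complex, and that $FH\cong H(M)$ by Floer's computation, tacitly using invariance to pass from $L$ to $\phi(Z)$. You do the same, and you also make explicit what the paper glosses over: the transversality hypothesis, the exactness of $\phi(Z)$, and the compactness issue on the non-compact $T^*M$.

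One small repair is needed. If $M$ is not spin, Floer Theorem~2 as stated does not give you $\mathbb{Q}$ coefficients. Work over $\Z/2\Z$ instead; this suffices because $\dim_{\Z/2\Z} H_i(M;\Z/2\Z)$ is at least the rank of $H_i(M;\Z)$ by the universal coefficient theorem.
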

 \begin{proof}
 We know that the dimension of a chain complex is greater than or equal to the dimension of its homology, and the dimension of $FC(Z,L)$ is $ |(\phi(Z)\cap Z)|$, so
 \[
  |(\phi(Z)\cap Z)|\geq \dim FH(Z,L)\cong \dim H(M;\Z).
 \]
 This establishes the theorem.
 \end{proof}
 
 We now have the following famous corollary of Gromov \cite{Gromov85} (though Gromov's proof was very different). 
\begin{theorem}
There is no closed exact Lagrangian submanifold in $(\R^{2n}, \omega_{std})$. (See Examples~\ref{stdex} for the symplectic structure on $\R^{2n}$.)
\end{theorem}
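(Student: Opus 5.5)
Suppose, for contradiction, that $L\subset(\R^{2n},\omega_{std})$ is a closed exact Lagrangian submanifold. The idea is to compute $FH(L,L)$ in two different ways and get incompatible answers. We work over $\Z/2\Z$ throughout, so Floer Theorem~2 and the spin hypothesis are not needed.

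\textbf{First computation: $FH(L,L)\cong H_*(L;\Z/2\Z)\neq 0$.} Since $L$ is exact in the exact manifold $\R^{2n}$, the theorem just proved says Floer homology of exact Lagrangians is well defined and Hamiltonian invariant. To compute $FH(L,L)$ we first make the pair transverse. By the Weinstein neighborhood theorem (which we would quote; it is standard), a neighborhood of $L$ is symplectomorphic to a neighborhood of the zero section $Z\subset T^*L$. Choose a Morse function $f$ on $L$ and a cutoff of $F=f\circ\pi$ supported in that neighborhood. Take the time-$\epsilon$ flow of $v_F$, with $\epsilon$ small. This Hamiltonian diffeomorphism $\phi$ of $\R^{2n}$ moves $L$ to the graph of $\epsilon\,df$ inside the neighborhood. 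Then
\[
FH(L,L)=FH(L,\phi(L)),
\]
and, by Floer's correspondence quoted above, the right-hand side agrees with the Morse homology $H_*(L)$. The one subtlety is that strips must be computed in $\R^{2n}$ rather than in $T^*L$. Here we use the action identity for strips between exact Lagrangians: for small $\epsilon$ every strip has action bounded by a constant times $\epsilon$. Monotonicity then forces every such strip to stay inside the Weinstein neighborhood, so the computation in $T^*L$ applies. Since $L$ is closed, $H_0(L;\Z/2\Z)\neq0$, and hence $FH(L,L)\neq0$.

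\textbf{Second computation: $FH(L,L)=0$.} Let $\psi$ be the time-$1$ flow of the Hamiltonian $H(x,y)=c\,x_1$, cut off outside a large ball. On the ball this flow is translation in the $y_1$ direction by distance $c$. Because $L$ is compact, we can choose $c$ larger than the diameter of $L$. Then $\psi(L)\cap L=\emptyset$, so $FC(L,\psi(L))=0$. Invariance gives
\[
FH(L,L)=FH(L,\psi(L))=0,
\]
which contradicts the first computation. Hence no closed exact Lagrangian exists in $(\R^{2n},\omega_{std})$.

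The main obstacle is the first step: justifying that the Floer complex of $L$ with its small Hamiltonian push-off in $\R^{2n}$ really reproduces Morse homology, which requires the strip-confinement argument above. The fallback is weaker but suffices. We only need $FH(L,L)\neq0$, not the full isomorphism. So it is enough to show that the unique minimum of $f$ gives a nonzero class, which can be checked with an action/energy filtration on the small-$\epsilon$ complex.
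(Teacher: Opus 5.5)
Your proposal is correct and follows essentially the same route as the paper. You displace $L$ from itself by a large $y_1$-translation, so the Floer complex is zero, and you use the Weinstein neighborhood together with Floer's Morse--Floer correspondence to get $H_*(L)\neq 0$ instead. Your energy bound $\mathcal{A}(u)=O(\epsilon)$ combined with monotonicity, which confines the strips to the Weinstein neighborhood, supplies a step the paper leaves implicit; so does the compactly supported cutoff.

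The fallback in your last paragraph does not work as stated. An action filtration only shows that the extremal-action generator is a cycle (or, with the opposite sign convention, that it is not a boundary), not both. So you would still need the strip/gradient-line count to conclude that the minimum gives a nonzero class.
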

\begin{proof}
Suppose there was a closed exact Lagrangian submanifold $L$ in $(\R^{2n}, \omega_{std})$. Let $f\co \R^{2n}\to \R$ be the projection onto the $x_1$-axis. Then the flow of the Hamiltonian vector field $v_f$ is translation in the $y_1$-direction. Thus there is a Hamiltonian diffeomorphism $\phi\co \R^{2n}\to \R^{2n}$ such that $\phi(L)\cap L=\emptyset$. This implies that $FH(L,\phi(L))=0$ but from the previous theorem, and the fact that any Lagrangian submanifold has a neighborhood symplectomorphic to a neighborhood of the zero section in its cotangent bundle, we know that $FH(L, \phi(L))$ must be isomorphic to $H(L;\Z)$ which is not trivial. Thus, no such $L$ exists. 
\end{proof}
 
 \subsubsection{Heegaard Floer homology}
 We now discuss an application of Lagrangian Floer homology to $3$-manifold topology. We only give a hint at this application and refer the reader to \cite{OzsvathSzab02004} for all the details.  
 
 Given a closed oriented $3$-manifold $M$, there is always a surface $\Sigma$ of some genus $g$ that splits $M$ into two genus $g$ handlebodies. By this we mean that $M\setminus \Sigma=V_0\cup V_1$ and there are $g$ disjoint curves $\alpha_1,\ldots, \alpha_g$ on $\Sigma$ such that each $\alpha_i$ bounds a disk $A_i$ in $V_0$ such that $V_0\setminus \cup_{i=1}^g A_i$ is a ball, and simlarly there are disjoint curves $\beta_1,\ldots, \beta_g$ on $\Sigma$ such that each $\beta_i$ bounds a disk $B_i$ in $V_1$ such that $V_1\setminus\cup_{i=1}^g B_i$ is a ball. This is called a \dfn{Heegaard splitting of $M$}. We note that the triple $(\Sigma,\{\alpha_i\}, \{\beta_i\})$ determines $M$, so if we can associate an invariant to this tripple then it might determine an invariant of $M$. It turns out that different triples can give the same $M$, so to really get an invariant of $M$ we need to make sure that all triples for $M$ give the same result. 
 
Notice that choosing an area form on $\Sigma$, we see that $\Sigma$ is a symplectic manifold and that any curve on $\Sigma$ is Lagrangian. Thus the $g$-fold product of $\Sigma$ with itself is a symplectic manifold and the products $\alpha_1\times\cdots \times \alpha_g$ and $\beta_1\times \cdots\times \beta_g$ are Lagrangian tori in this symplectic manifold. So we could try to apply Lagrangian Floer theory to these two tori, but this is not what we will do. We notice is that different labelings of the $\alpha_i$ and $\beta_i$ determine the same manifold $M$, and we would like our construction not to depend on the ordering of these curves. To this end, we consider a quotient of the $g$-fold product of $\Sigma$ by the symmetric group $S_g$ (acting by permutting the coordinates). This is called the \dfn{symmetric product} and denoted $\sym^g(\Sigma)$. We can now look at the image of the above two tori in $\sym^g(\Sigma)$ and denote them by $\mathbb{T}_\alpha$ and $\mathbb{T}_\beta$. With a little work, one can show that $\sym^g(\Sigma)$ is also a symplectic manifold and the tori $\mathbb{T}_\alpha$ and $\mathbb{T}_\beta$ are Lagrangian. Now our symplectic manifold and Lagrangian submanifolds do not depend on the ordering of the $\alpha_i$ and $\beta_i$. With a great deal of effort one can show that the Lagrangian Floer homology of $\mathbb{T}_\alpha$ and $\mathbb{T}_\beta$ is well-defined, so now we could define an invariant of the triple to be the Lagrangian Floer homology 
\[
FH(\mathbb{T}_\alpha, \mathbb{T}_\beta).
\]
This turns out to be an invariant of $M$, but not a particularly interesting one. To get an interesting invariant of $M$, we need to add one more thing. If one fixes a point $z\in \Sigma$ that is disjoint from the $\alpha_i$ and $\beta_i$ then we can consider the image of $\{z\}\times \sym^{g-1}(\Sigma)$ in $\sym^g(\Sigma)$, denoted by $F_z$. This is a symplectic submanifold, and we can modify the differential in the Floer complex to take account of this submanifold in various ways. For example, we could ignore pseudo-holomorphic strips that intersect $F_z$, or we could add a formal variable to the coefficient ring of the chain complexes and then include a power of this variable that counts the intersections of the pseudo-holomorphic strips with $F_z$. One may check that doing either of these things will still produce a chain complex, and the homology of these complexes produces a powerful invariant of $M$. (Of course, to see it is an invariant of $M$, one needs to check that different triples and different choices of $z$ lead to the same homologies.) One may find details of all of these facts above in \cite{OzsvathSzab02004} and some of the first applications in \cite{OzsvathSzab02004b}. A nice, and more details, survey of the construction can be found in \cite{OzsvathSzabo06a}.

\section{Legendrian contact homology}
We end this survey by discussing Legendrian contact homology. This is a version of Floer homology for Legendrian submanifolds of contact manifolds. It is a powerful invariant of Legendrian submanifolds and was first developed by Chekanov \cite{Chekanov02} and Eliashberg \cite{Eliashberg98} in dimension $3$, with the latter also discussing the higher-dimensional situation. They were able to show that there were Legendrian knots in $\R^3$ that had the same classical invariants but were not Legendrian isotopic because they had different Legendrian contact homology. For more details on applications of Legendrian contact homology, please see \cite{EtnyreNg2022}. The theory was developed for certain contact manifolds in higher dimensions in \cite{EkholmEtnyreSullivan05b, EkholmEtnyreSullivan07} and can be used to produce powerful (complete!) invariants of smooth knots in $\R^3$, see \cite{EkholmEtnyreNgSullivan2013}.

We begin this section by discussing a few facts about contact geometry and Legendrian submanifolds. In the next section, we define Legendrian contact homology. A key feature, and the reason why we are discussing this here, is that the compactness issues that show up in this context do not allow us to define a homology theory as a vector space, but it will need to be an algebra. So we will see that we can follow the same program to create a homology theory as we did above, but the compactness issues will dictate the algebraic structure necessary to define our chain groups.  We then end with a section discussing applications of Legendrian contact homology. 

\subsection{A little contact geometry}
There are many good introductions to contact geometry, for example \cite{Etnyre03, EtnyreTosunPre24, Geiges08}. We refer the reader to those sources for a more comprehensive introduction to the subject, but discuss a few basic definitions and results here to set the stage for Lagrangian contact homology.

A contact manifold is a pair $(M,\xi)$, where $M$ is a manifold of dimension $2n+1$, and $\xi$ is a sub-bundle of the tangent bundle $TM$, that has fibers of dimension $2n$ and is totally non-integrable. That is, $\xi$ is a hyperplane field on $M$, and being totally non-integrable means that if it is tangent to a submanifold of $M$ along an open set of the submanifold, then the submanifold has dimension less than or equal to $n$. One may use the Frobenius theorem to show that the total non-integrability is equivalent to $\xi$ being (locally) defined by a $1$-form $\alpha$ (that is $\xi=\ker \alpha$) and 
\[
\alpha\wedge (d\alpha)^n
\]
is a never zero $2n+1$-form. 
\bex
Let $(X,d\lambda)$ be an exact symplectic manifold. Then the manifold $M=X\times\R$ has a contact structure $\xi_\lambda$ given by the kernel of $dt-\lambda$, where $t$ is the coordinate on $\R$. 

A specific example of this construction is $M=T^*W\times\R$ where we use the Liouville form $\lambda_{std}$ on the cotangent bundle $T^*W$. \

An even more specific example is $\R^{2n+1}=T^*\R^n\times \R$ with the contact form $\alpha=dt-\sum_{i=1}^n y_i \, dx_i$, where the $x_i$ are coodinates on $\R^n$ and the $y_i$ are coordinates on the fibers of $T^*\R^{n}$. See Figure~\ref{fig:ctex} for the case when $n=1$.
\begin{figure}[htb]{
\begin{overpic}
{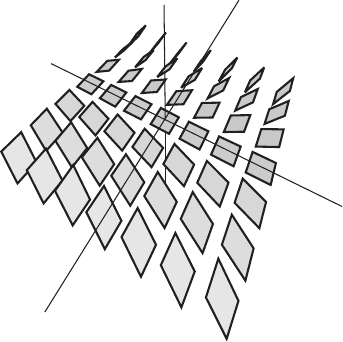}
\put(116, 165){$y$}
\put(169, 61){$x$}
\put(78, 165){$t$}
\end{overpic}}
\caption{The standard contact structure on $\R^3$.}
\label{fig:ctex}
\end{figure} 
\eex

A submanifold $L$ of a $(2n+1)$-dimensional contact manifold $(M,\xi)$ is called \dfn{Legendrian} if it has dimension $n$ and the tangent space of $L$ is contained in $\xi$: $T_xL\subset \xi_x$ for all $x\in L$. Legendrian submanifolds are essential objects in contact geometry, see for example, \cite{Etnyre05, etnyre2025pre}.

Suppose that $L$ is a Legendrian submanifold of $(X\times\R,dt-\lambda)$ and let $\pi\co (X\times\R)\to X$ be the projection map. 
\bhw
Show that $\pi(L)$ is an immersed exact Lagrangian submanifold of the symplectic manifold $(X,-d\lambda)$. 
\ehw
The idea of Legendrian contact homology is to try to ``do'' Lagrangian Floer homology with $\pi(L)$. We will see below that this does not work, but with some modification, we can build a different homology theory that does work. 


\subsection{Defining Legendrian contact homology}
As mentioned at the end of the last section, we would like to try to define some form of Lagrangian Floer homology for $\pi(L)$ where $L$ is a compact Legendrian submanifold of $(X\times\R,dt-\lambda)$ and $\pi\co (X\times\R)\to X$ is the projection map. To that end, observe that, possibly after a small perturbation, we can assume that $\pi(L)$ is embedded except for a finite number of transverse double points. We could then try to define a complex as a vector space generated by the double points of $\pi(L)$ and then, after choosing a complex structure $J$ compatible with $-d\lambda$, define a differential by counting pseudo-holomorphic strips between double points. We begin by seeing that this does not work!

We will consider the situation where $X=T^*\R$ (though the same considerations occur in the more general situation). Consider a pseudo-holomorphic map $u\co (\R\times [0,1])\to \R^2$ such that the boundary of $\R\times [0,1]$ maps to $\pi(L)$ and as one approaches infinity, the map limits to double points. Notice that this map is a pseudo-holomorphic map from $\R\times [0,1]$ to $\R^2$, so we can use our knowledge of single variable complex analysis to study this map. We first note that if there is a non-regular point of $u$ on the interior of $\R\times [0,1]$, then generically, we can assume it is of the form $z\mapsto z^n$ and, if $n>2$, by a small perturbation, the critical point will break into several critical points which have local model $z\mapsto z^2$. So, assuming our critical point has a local model of the form $z\mapsto z^2$, we note that there is a $2$-parameter family of maps near $u$ with the same image that come by moving the critical point (since $\R^2$ is $2$-dimensional). We also note that we still have the $\R$ actions on such maps, so we really have a $2$-dimensional family, even after modding out by this $\R$ action. Since we are only interested in strips in $1$-dimensional families (see the section on Lagrangian Floer homology), we can assume the maps $u$ we are interested in do not have interior critical points. 

Now consider a critical point on the boundary. Generically, we can assume it is also of the form $z\mapsto z^2$, but now the critical points must map to $\pi(L)$, and thus, there will be a $1$-parameter family of maps near $u$. So again, if we are interested in rigid maps, we will only consider maps $u$ with no critical points on the boundary or interior. That is, $u$ is an immersion. However, when trying to prove $\partial^2=0$, we will need to consider $1$-parameter familes and so will consider $u$ with a critical point on its boundary. 

Now suppose $x$, $y$, and $z$ are double points of $\pi(L)$ and $u\co (\R\times [0,1])\to \R^2$ is a rigid pseudo-holomorphic strip from $x$ to $y$ with boundary in $\pi(L)$ and $u'\co (\R\times [0,1])\to \R^2$ is a similar strip from $y$ to $z$. So if we defined our homology theory like in the case of Floer homology, then $z$ would be counted in $\partial^2 x$. We can see such a situation in Figure~\ref{fig:2disks}. 
\begin{figure}[htb]{
\begin{overpic}
{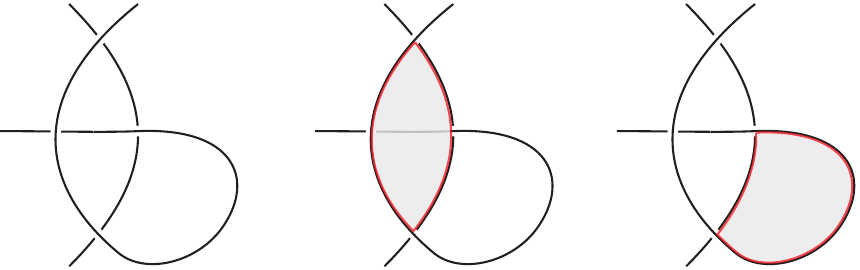}
\put(37, 109){$x$}
\put(37, 15){$y$}
\put(69, 70){$z$}
\put(17, 70){$w$}
\put(194, 50){$u$}
\put(375, 30){$u'$}
\end{overpic}}
\caption{Part of the projection $\pi(L)$ is shown on the left. The middle diagram shows the image of $u$ while the right-hand diagram shows the image of $u'$.}
\label{fig:2disks}
\end{figure} 
Figure~\ref{fig:diskfamily} shows the family of disks formed when $u$ and $u'$ are glued.
\begin{figure}[htb]{
\begin{overpic}
{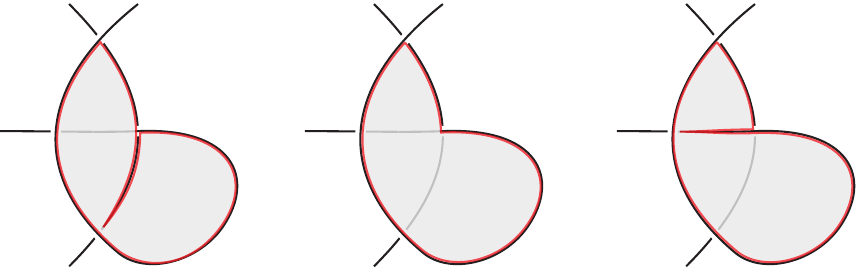}
\end{overpic}}
\caption{On the left, we see the image of a pseudo-holomorphic strip obtained by gluing $u$ and $u'$. Notice the branched point on the boudnary near the crossing $y$. In the middle, we see the middle of the family of pseudo-holomorphic strips where the ``branched point'' is ``at infinity'' or ``at $z$''. On the right, we see a pseudo-holomorphic strip with the branched point approaching $w$. }
\label{fig:diskfamily}
\end{figure} 
We notice that this family will now break into pseudo-holomorphic disks as shown in Figure~\ref{fig:broken}.
\begin{figure}[htb]{
\begin{overpic}
{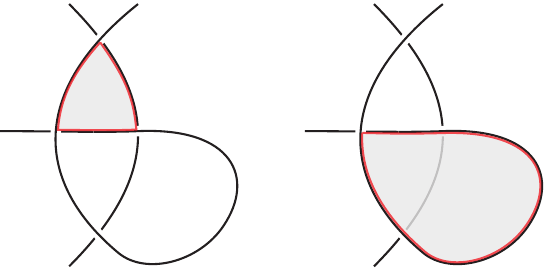}
\end{overpic}}
\caption{The two pseudo-holomorphic disks in the boundary of the family in Figure~\ref{fig:diskfamily}.}
\label{fig:broken}
\end{figure} 
So we see that the boundary of the compactified moduli spaces has a broken pseudo-holomorphic strip made out of two rigid pseudo-holomorphic strips and also a broken disk made out of a pseudo-holomorphic disk going through three double points and another pseudo-holomorphic disk going through only one double point. 

Let us formalize these sorts of pseudo-holomorphic disks. Let $p_0, \ldots, p_k$ be $k+1$ points arranged anti-clockwise round the boundary of the unit disk $D^2$ in $\R^2$, and let $D_{k+1}$ be $D^2-\{p_0,\ldots, p_k\}$. We will consider maps 
\[
u\co D_{k+1}\to \R^2
\]
with $u(\partial D_{k+1})\subset \pi(L)$. We will be interested in such maps where the limits as we approach one of the $p_i$ is a double point of $\pi(L)$. Moreover, we call a puncture $p_i$ of $u$ \dfn{positive} if as we transverse $\partial D_{k+1}$ in the anti-clockwise direction we move from a lower sheet of $\pi(L)$ to an upper sheet of $\pi(L)$ and we will call the puncture \dfn{negative} if we move from an upper sheet to a lower sheet. 

In this language, we see that if we ``glue'' two rigid pseudo-holomorphic disks from $D_2$ to $\R^2$, we can obtain a $1$-parameter family of disks whose other boundary component consists of pseudo-holomorphic disks with domain $D_3$ and $D_1$. So it is clear that the formalism in the definition of Floer homology for a pair of Lagrangian submanifolds from the last section will not work here! Moreover, it is not hard to see that if we also consider disks with $3$ punctures, then in $1$-parameter families we will also see disks with more punctures on their boundaries. We do note that if we glue a rigid pseudo-holomorphic disk with $n$ punctures, one being positive and the other negative, to a disk with $m$ punctures of the same type, then we get a $1$-parameter family of pseudo-holomorphic disks with punctures that will have boundary two disks with punctures. 

So if we try to define a differential of a double point in $\pi(L)$ we need to consider all such disks. So the differential of a double point $x$ will need to count disks with a positive puncture at $x$ and negative punctures at $x_1,\ldots, x_k$. Then the chain complex cannot be a vector space generated by the double points, but it will need to be an algebra generated by the double points so that the boundary consists of words in the double points. {\bf So we see that the compactness issues in the problem force us to choose an algebraic structure for the chain complex when trying to define a homology theory.}

Based on the above observations, we can now give the definition of Legendrian contact homology for Legendrian knots in $X\times\R$ where $X$ is an exact symplectic manifold. To this end, if $L$ is such a Legendrian submanifold, we let the chain complex $C(L)$ be the free algebra over $R$ generated by the double points of $\pi(L)$, where $R$ is the ring $\Z/2\Z$ or $Z$ (there are more sophisticated rings as well). 

We will now define a grading on $C(L)$. To simplify this, we assume that the tangent space of $X$ can be globally trivialized: $TX\cong X\times \C^{n}$. This assumption is not necessary, but without it, the definition is more complicated. Now, given a double point $x\in \pi(L)$; there are two points in $L$ that map to $x$. Call the one with the larger $\R$-coordinate $x^+$ and the other $x^-$. Choose a path $\gamma_x\co [0,1]\to L$ from $x^+$ to $x^-$. Notice that $T_{\pi(\gamma_x(t))} \pi(L)$ is a Lagrangian subspace of $T_{\pi(\gamma_x(t))}X=\C^n$. Because the tangent bundle of $X$ is trivial, we obtain a path of Lagrangians in $\C^n$. We can make this path a loop as we did in Section~\ref{maslovsection} above. The \dfn{Conley-Zehnder index} $\nu(x)$ of $x$ is simply the Maslov index of this loop of Lagrangian subspaces. Notice that this number depends on the path $\gamma$ we chose. To deal with this, we notice that we can get a loop of Lagrangian subspaces in $\C^n$ from any loop $l$ in $L$. By taking the Maslov index of the loop of Lagrangian subspaces associated to $\pi(l)$, we obtain a map $\mu\co H_1(L)\to \Z$. Let $c(L)$ be the smallest positive integer in the image of this map. It is not hard to see that $\nu(x)$ is well-defined modulo $c(L)$. Finally, we define the \dfn{grading} of a double point $x$ to be $|x|=\nu(x)-1$, and the grading of a word in double points is the sum of the gradings of the elements in the word. Our grading is well-defined modulo $c(L)$. 

To define the differential of a point $p$, we consider pseudo-holomorphic maps $u\co D_{k+1}\to X$ with $u(\partial D_{k+1})\subset \pi(L)$, with the puncture $p_0$ being positive and mapping to $x_0$ and the other punctures being negative where $p_i$ maps to $x_i$. Denote the moduli space of such maps, modulo pseudo-holomorphic reparameterization, by $\mathcal{M}^{x_0}_{x_1,\ldots, x_k}$.
\begin{tcolorbox}[title={Legendrian Floer Theorem 1 (Transversality)}]
Suppose $L$ is a Legendrian submanifold of $(X\times\R,dt-\lambda)$ such that $\pi(L)$ has transverse double points.
For a generic choice of almost complex structure $J$ compatible with $d\lambda$ or a generic perturbation of $L$, the space 
\[
\mathcal{M}^{x_0}_{x_1,\ldots, x_k}
\]
is a manifold of dimension $|x_0|-\sum_{i=1}^k |x_i|-1$.
\end{tcolorbox}
(We note that the dimension in this theorem is only well-defined modulo $c(L)$, since the grading is only defined modulo $c(L)$. This just means that the moduli space can have components of dimension $|x_0|-\sum_{i=1}^k |x_i|-1 + nc(L)$ for any integer $n$.)
\begin{tcolorbox}[title={Legendrian Floer Theorem 2 (Orientability)}]
Suppose $L$ is a Legendrian submanifold of $(X\times\R,dt-\lambda)$ such that $\pi(L)$ has transverse double points. If $L$ is spin, then
$
\mathcal{M}^{x_0}_{x_1,\ldots, x_k}
$ is orientable.
\end{tcolorbox}

For our compactness result, we need to see to what a sequence of pseudo-holomorphic disks in $\mathcal{M}^{x_0}_{x_1,\ldots, x_k}$ can converge. To this end, we define a broken pseudo-holomorphic disk. Given an element $u\in \mathcal{M}^{x_0}_{x_1,\ldots, x_k}$ and $v\in \mathcal{M}^{x_i}_{x_{k+1},\ldots, x_{l}}$ for some $i$ between $1$ and $k$ and some $l\geq k$, we call the union of $u$ and $v$ a \dfn{broken pseudo-holomorphic disks}. We can define a sequence of disks $u_n$ in $\mathcal{M}^{x_0}_{x_1,\ldots, x_{i-1}, x_{k+1},\ldots, x_{l},x_{i+1},\ldots, x_k}$ converging to the broken disk defined by $u$ and $v$ just as we did in Section~\ref{Gc}. See Figure~\ref{fig:brokendisk}.
\begin{figure}[htb]{
\begin{overpic}
{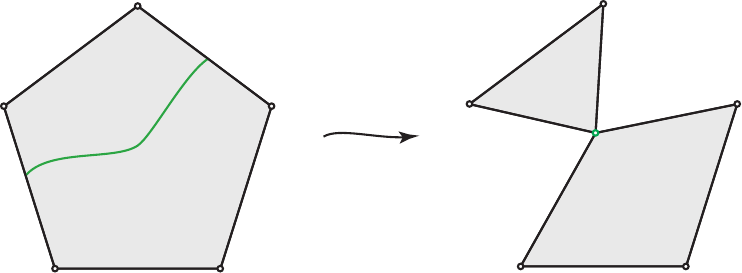}
\put(62, 119){$+$}
\put(8, 76){$-$}
\put(119, 76){$-$}
\put(30, 5){$-$}
\put(98, 5){$-$}
\put(281.5, 117){$+$}
\put(234, 81){$-$}
\put(343, 72){$-$}
\put(257, 5){$-$}
\put(321, 5){$-$}
\put(288, 59){$+$}
\put(276, 73){$-$}
\end{overpic}}
\caption{A pseudo-holomorphic disk converging to a broken pseudo-holomorphic disk.}
\label{fig:brokendisk}
\end{figure} 

We now have our compactness result. 
\begin{tcolorbox}[title={Legendrian Floer Theorem 3 (Compactness)}]
Suppose $L$ is a Legendrian submanifold of $(X\times\R,dt-\lambda)$ such that $\pi(L)$ has transverse double points. A sequence of pseudo-holomorphic disks in $\mathcal{M}^{x_0}_{x_1,\ldots, x_k}$ has a subsequence that converges to a pseudo-holomorphic disk in $\mathcal{M}^{x_0}_{x_1,\ldots, x_k}$ or a broken pseudo-holomorphic disk.
\end{tcolorbox}
\begin{remark}
We note that, for simplicity, this theorem is not quite accurate in some cases, as we defined $\mathcal{M}^{x_0}_{x_1,\ldots, x_k}$ to be pseudo-holomorphic disks moded out by pseudo-holomorphic reparameterization. When $k$ is large, there are no such reparameterizations and the theorem is correct as stated, but for small $k$, the statement should be made before one considers reparameterization.
\end{remark}

Just as we did while discussing Floer homology, we roughly state the gluing theorem as follows. 
\begin{tcolorbox}[title={Legendrian Floer Theorem 4 (Gluing)}]
Any broken pseudo-holomorphic disk (that is, anything that can appear in the limit in Legendrian Floer Theorem 3) can be ``glued'' to find a family of pseudo-holomorphic disks that converge to it. 
\end{tcolorbox}

We are now ready to define our differential. We will define it on generators and then extend to the algebra using linearity and a signed Leibniz rule 
\[
\partial (w_1w_2)=(\partial w_1)w_2+(-1)^{|w_1|}w_1\partial (w_2).
\]
Let $x$ be a double point of $\pi(L)$, that is, a generator of $C(L)$. We define the \dfn{differential of $x$} to be
\[
\partial x= \sum_{\substack{x_1,\ldots, x_k \\ |x|-\sum_{i=1}^k|x_i|=1}} |\mathcal{M}^{x}_{x_1,\ldots, x_k}|\,  x_1\cdots x_k,
\]
where $|\mathcal{M}^{x}_{x_1,\ldots, x_k}|$ is the signed count of the $0$ dimensional components of $\mathcal{M}^{x}_{x_1,\ldots, x_k}$, and $k$ is any non-negative integer. 

Arguing as in previous sections, we can easily check the following result. 
\begin{theorem}
The differential $\partial \co C(L)\to C(L)$ lowers the grading by $1$ and satisfies $\partial^2=0$.
\end{theorem}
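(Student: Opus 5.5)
First, the differential lowers the grading by $1$. On a generator $x$, every word $x_1\cdots x_k$ that appears in $\partial x$ satisfies $|x|-\sum_{i=1}^k|x_i|=1$ by definition. Its grading is $\sum_i|x_i|=|x|-1$. For a general word $w_1w_2$ we use the signed Leibniz rule and induct on word length: $(\partial w_1)w_2$ and $w_1\partial w_2$ both have grading $|w_1|+|w_2|-1$. All of this holds modulo $c(L)$, which is the only sense in which the grading is defined.

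For $\partial^2=0$, the Leibniz rule reduces the problem to generators. Indeed, $\partial^2(w_1w_2)=(\partial^2w_1)w_2+(-1)^{|w_1|-1}(\partial w_1)(\partial w_2)+(-1)^{|w_1|}(\partial w_1)(\partial w_2)+w_1\partial^2w_2$. The two middle terms cancel, so it is enough to show $\partial^2x=0$ for each double point $x$. Expanding with the Leibniz rule, the coefficient of a word $y_1\cdots y_m$ in $\partial^2x$ is a signed sum over ordered pairs $(u,v)$. Here $u\in\mathcal{M}^{x}_{x_1,\ldots,x_k}$ is rigid, and $v\in\mathcal{M}^{x_i}_{x_{k+1},\ldots,x_l}$ is rigid for some $1\le i\le k$. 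The pair must be such that substituting the negative punctures of $v$ for $x_i$ in the word $x_1\cdots x_k$ gives $y_1\cdots y_m$. Each such pair is exactly a broken pseudo-holomorphic disk in the sense defined before Legendrian Floer Theorem~3, and both pieces lie in $0$-dimensional moduli spaces.

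Next consider $\mathcal{M}^{x}_{y_1,\ldots,y_m}$ with $|x|-\sum|y_j|=2$. By Legendrian Floer Theorem~1 it is a $1$-manifold. By Theorem~3, a sequence in it with no convergent subsequence converges to a broken disk. Dimensions add under breaking, because the index formula is additive: the sum of the two dimensions of the pieces is $1-1=0$, after the extra $-1$ shifts are accounted for. Neither piece can have negative dimension for generic data, so both pieces are rigid. Theorem~4 shows that every such broken disk is the limit of exactly one end of the $1$-manifold. Hence $\overline{\mathcal{M}^{x}_{y_1,\ldots,y_m}}$ is a compact $1$-manifold whose boundary is precisely the set of pairs $(u,v)$ above. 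Its signed boundary count vanishes, just as in the proof of Theorem~\ref{d2}, so the coefficient is zero.

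The main obstacle is the signs. The count of boundary points of the compactified moduli space comes with orientations from Legendrian Floer Theorem~2. We must check that these agree with the sign $(-1)^{|x_1\cdots x_{i-1}|}$ that the Leibniz rule attaches when $\partial$ acts on the $i$th letter. Over $\Z/2\Z$ this issue disappears, and I would first give the complete argument there. For $\Z$ coefficients I would invoke a ``consistent'' orientation convention, analogous to the one in Morse Theorem~4, under which gluing at the $i$th negative puncture changes orientation by exactly the Koszul sign above. A secondary subtlety is the case of small $k$, where the moduli spaces have reparameterization symmetry, as noted in the remark after Theorem~3. There one should work before quotienting, as that remark indicates. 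The dimension bookkeeping modulo $c(L)$ also needs care, since components of other dimensions must be excluded. This is handled by restricting to the $0$- and $1$-dimensional components.
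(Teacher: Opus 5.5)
Your proposal is correct and follows the route the paper intends. The paper's proof is only ``arguing as in previous sections,'' meaning that the coefficient of each word in $\partial^2 x$ is identified with the signed boundary count of a compactified $1$-dimensional moduli space, as in the proof of Theorem~\ref{d2}. Your Leibniz-rule reduction to generators, your dimension bookkeeping for broken disks, and your caveats about signs and small $k$ carry out that sketch faithfully and in more detail.
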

We can now define the \dfn{Legendrian contact homology} to be
\[
LCH(L)=\frac{\ker(\partial\co C(L)\to C(L))}{\img(\partial\co C(L)\to C(L))}.
\]

Moreover, we can also prove the invariance of the homology under Legendrian isotopy and the choice of complex structure $J$, by using alternate versions of Legendrian Floer Theorems~1 through~4 in a similar way we proved that Morse homology is independent of the chosen metric and Morse function. 

\begin{theorem}
If $L$ and $L'$ are Legendrian isotopic, then $LCH(L)\cong LCH(L')$. 
\end{theorem}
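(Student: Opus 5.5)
The plan is to follow the invariance argument used for Morse homology, with one change: the moduli spaces of disks with several negative punctures force the chain-level maps to be algebra maps (DGA maps) rather than linear maps. First, a Legendrian isotopy $L_t$, $t\in[0,1]$, can be subdivided into short pieces. On each piece, the projection $\pi(L_t)$ either has transverse double points that move smoothly, or passes through one of finitely many codimension-one degenerations: a double point is born or dies, or (in dimension 3) a triple point appears. Changing $J$ is handled by the same argument as a moving isotopy without bifurcations. It is therefore enough to produce an isomorphism on homology across each elementary piece and then compose, as in the concatenation exercise for Morse homology.

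For an interval with no bifurcation, the generators of $C(L_0)$ and $C(L_1)$ are canonically identified. I would build a cobordism-type moduli space $\widetilde{\mathcal M}^{x_0}_{x_1,\ldots,x_k}$ of disks for the time-dependent data $(L_t,J_t)$. This is the analogue of $\widetilde{\mathcal M}^p_q$ in Morse Theorem~1': there is no $\R$-action, so the dimension is $|x_0|-\sum|x_i|$. Set
\[
\Phi(x)=\sum_{|x|=\sum|x_i|}|\widetilde{\mathcal M}^{x}_{x_1,\ldots,x_k}|\,x_1\cdots x_k,
\]
and extend $\Phi$ as an algebra homomorphism. The analogues of Legendrian Floer Theorems~1 through~4 for families say that the boundary of the compactified $1$-dimensional moduli spaces consists of broken disks. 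In these, one level is a rigid disk for $L_0$ sitting on top of several family disks attached at its negative punctures, or one family disk has a rigid $L_1$ disk attached at one of its negative punctures. Counting these boundary points gives $\partial_1\Phi=\Phi\partial_0$ on generators, and the Leibniz rule together with multiplicativity extends this identity to all of $C(L)$. A homotopy of homotopies, as with the map $K$ in the Morse section, shows that the induced map on homology does not depend on the choices. Running the isotopy backwards gives maps whose composites are chain homotopic to the maps for constant paths. Those are the identity on homology, by the small-perturbation exercise.

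The main obstacle is the bifurcation moments, and more fundamentally the algebra-level chain homotopy. Since $\Phi$ is an algebra map, the homotopy $K$ must be a $(\Phi,\Psi)$-derivation, $K(ab)=K(a)\Psi(b)+(-1)^{|a|}\Phi(a)K(b)$. Checking that the boundary of the $1$-dimensional parametrized moduli spaces matches $\partial K+K\partial+\Phi-\Psi$ requires care: a broken configuration can now contain many rigid disks, and the single parametrized disk can occur at any negative puncture. For births and deaths of double points I would not count disks across the bifurcation. Instead, following Chekanov's approach, I would compare $C(L_0)$ with $C(L_1)$ directly. Just after a birth, the two new double points $a$ and $b$ satisfy $\partial a=b+\cdots$, where the thin disk between them is the unique small rigid disk. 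I would show that $C(L_1)$ is tamely isomorphic to a stabilization $C(L_0)*\langle a,b\rangle$ with $\partial a=b$, and that such stabilizations do not change homology. The delicate step is this identification: controlling the disks near the new crossing by an energy (action) filtration, since small disks have small $\int u^*d\lambda$, equal to the difference of the crossing heights. This lets me triangulate the differential and build the tame isomorphism inductively on action.
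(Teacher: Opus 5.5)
On intervals where no double points are created or destroyed, your argument is the paper's. The paper proves invariance only by appealing to family versions of Legendrian Floer Theorems~1--4, used exactly as in the Morse section. Your degree-preserving algebra map $\Phi$, the $(\Phi,\Psi)$-derivation $K$, and the constant-path and concatenation steps are the right DGA versions of that argument.

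The problem is the detour at births and deaths. It rests on the idea that a continuation map needs the generators at the two ends to be identified, and that is not so. The Morse map $\Phi_\Gamma\co C^0\to C^1$ already relates complexes with unrelated critical points. In the same way, the family moduli spaces are asymptotic only to the transverse double points of $\pi(L_0)$ and $\pi(L_1)$, so a pair of double points born or killed in the middle of the isotopy never enters the definition of $\Phi$. This is cleanest if you build the family moduli spaces on the trace of the isotopy, an embedded exact Lagrangian cylinder in the symplectization $\R\times(X\times\R)$. There, births, deaths and handle slides in the middle of the isotopy create no boundary strata beyond the two kinds of broken disks you list. So the paper's route needs no bifurcation analysis at all.

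Having taken the detour, you have not supplied its key step. The action filtration gives only the algebraic half of Chekanov's argument. Because $a$ and $b$ have nearly equal action, every word of $\partial_1 a$ other than $b$ avoids $a$ and $b$. Induction on action then yields a tame isomorphism from $C(L_1)$ to a stabilization of \emph{some} DGA $(C',\partial')$ on the old generators. But $\partial'$ is built out of $\partial_1$, and nothing in your outline shows that $\partial'=\partial_0$.

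Proving that requires comparing the rigid disks of $L_1$ with the rigid disks of $L_0$ as the thin region collapses onto the self-tangency. This includes disks of large energy with negative punctures at $a$ or $b$. It is a compactness-and-gluing statement: Chekanov does it by inspecting immersed polygons in dimension $3$, and Ekholm--Etnyre--Sullivan do it by a genuine degeneration analysis in higher dimensions. Energy bounds on small disks cannot substitute for it.

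You also list triple points as bifurcations and never treat them. They are harmless for continuation maps, since no generators change there. If you do complete the disk comparison, the bifurcation route gives more than the theorem asks for: a stable tame isomorphism of the DGAs, which is how Chekanov and Ekholm--Etnyre--Sullivan state invariance. The paper's route gives only the isomorphism of $LCH$.
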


\subsection{Applications}
It was an open question whether two Legendrian knots in the standard contact structure on $\R^3$ that were smoothly isotopic and had the same ``classical invariants" (these are invariants associated with the tangent information of a Legendrian knot) had to be Legendrian isotopic. In the late 1990s, Chekanov \cite{Chekanov02} and Eliashberg \cite{Eliashberg98} defined Legendrian contact homology and used it to show that such Legendrian knots do indeed exist. It turns out that it is very difficult to work with $LCH(L)$ since it is a non-abelian object. So it was important to be able to extract computable information from this difficult algebraic object. This was done using ``augmentations" and "linearizations". We refer the reader to \cite{EtnyreNg2022} for a recent overview of this whole theory, along with several computations. In a series of papers \cite{EkholmEtnyreSullivan05b, EkholmEtnyreSullivan05a, EkholmEtnyreSullivan05c, EkholmEtnyreSullivan07}, Ekholm, Sullivan, and the author developed this theory in higher dimensions and also used it to distinguish many Legendrian submanifolds. 

There is another interesting and surprising application of Legendrian contact homology. Specifically, one can use it to study and distinguish smooth knots in $\R^3$! This actually works for submanifolds in any dimensions, but we will restrict our discussion to knots in $\R^3$. We first construct our contact manifold. If we fix a metric on $\R^3$ and look at the unit cotangent bundle $U^*\R^3\subset T^*\R^3$, we can restrict the Liouville form $\lambda_{std}$ on $T^*\R^3$ to obtain a contact form on $U^*\R^3$. Now, if $K$ is a knot in $\R^3$, then we can consider $U^*(K)$, the unit conormal bundles
\[
U^*(K)=\{\alpha\in U_x^*\R^3: x\in K \text{ and } \alpha \text{ vanishes on $T_xK$}\}.
\]
One may easily check that $U^*(K)$ is a Legendrian torus in $U^*\R^3$; and, moreover, if $K$ and $K'$ are smoothly isotopic then $U^*(K)$ and $U^*(K')$ are Legendrian isotopic. Thus, the Legendrian contact homology of $U^*(K)$ is an invariant of the smooth isotopy class of $K$! It is known that this is almost a complete invariant (and might be a complete invariant) \cite{EkholmNgShende18}. For more on this story, see \cite{EkholmEtnyreNgSullivan2013}. 

\def\cprime{$'$}

\end{document}